\newtheorem{definition}{Definition}[section]
\newtheorem{theorem}[definition]{Theorem}
\newtheorem{lemma}[definition]{Lemma}
\newtheorem{remark}[definition]{Remark}
\newtheorem{example}[definition]{Example}
\newtheorem{proposition}[definition]{Proposition}
\newcommand{\M}{\mathcal{M}}
\begin{document}

\title[Non-linear traces on semifinite factors]{Non-linear traces on semifinite factors and generalized singular values}

\author{Masaru Nagisa}
\address[Masaru Nagisa]{Department of Mathematics and Informatics, Faculty of Science, Chiba University, 
Chiba, 263-8522,  Japan: \ Department of Mathematical Sciences, Ritsumeikan University, Kusatsu, Shiga, 525-8577,  Japan}
\email{nagisa@math.s.chiba-u.ac.jp}
\author{Yasuo Watatani}
\address[Yasuo Watatani]{Department of Mathematical Sciences,
Kyushu University, Motooka, Fukuoka, 819-0395, Japan}
\email{watatani@math.kyushu-u.ac.jp}

\maketitle

\begin{abstract}
We introduce non-linear traces of the Choquet type and Sugeno type on a semifinite factor $\M$ as a non-commutative 
analog of the Choquet integral and Sugeno integral for non-additive measures. 
We need weighted dimension function $p \mapsto \alpha(\tau(p))$ for projections $p \in \M$, which is an analog of 
a monotone measure.  
They have certain partial additivities. We show that these partial additivities characterize 
non-linear traces of both the Choquet type and Sugeno type respectively. 
Based on the notion of generalized eigenvalues and singular values, 
we show that non-linear traces of the Choquet type are closely related to the Lorentz function spaces and 
the Lorentz operator spaces if the weight functions $\alpha$ are concave. For the algebras of compact operators 
 and factors of type ${\rm II}$, 
we completely determine the condition that the associated weighted $L^p$-spaces for the non-linear traces become 
quasi-normed spaces in terms of the weight functions $\alpha$ for any $0 < p < \infty$. 
We also show that any non-linear trace of the Sugeno type gives a certain metric on the factor.  
This is an attempt at non-linear and non-commutative integration theory on semifinite factors.

\medskip\par\noindent
AMS subject classification: Primary 47B10. Secondary 46L51, 47B06. 

\medskip\par\noindent
Key words: non-linear trace, semifinite factor, monotone map,  generalized singular value, quasi-norm. 

\end{abstract}

\section{Introduction}

We studied several classes of general non-linear positive maps between $C^*$-algebras 
in \cite{nagisawatatani}.  We introduced non-linear traces  
of the Choquet type and the Sugeno type on matrix algebras in \cite{nagisawatatani2} and on the algebra $K(H)$ of 
compact operators in \cite{nagisawatatani3}. 

Ando-Choi \cite{A-C} and Arveson \cite{Ar2} originated the study of  non-linear completely 
positive maps and extended the Stinespring dilation theorem. 
Hiai-Nakamura \cite{H-N} studied a non-linear counterpart of Arveson's 
Hahn-Banach type extension theorem \cite {Ar1} for completely positive linear maps. 
Bel\c{t}it\u{a}-Neeb \cite{B-N} considered non-linear completely positive maps and dilation 
theorems for real involutive algebras.  Dadkhah-Moslehian \cite{D-M} studied some properties of non-linear positive maps 
like Lieb maps and the multiplicative domain for 3-positive maps. Dadkhah-Moslehian-Kian 
\cite{D-M-K} considered the continuity of non-linear positive maps between $C^*$-algebras.

The functional calculus by a continuous positive function is an important example of non-linear positive maps as in  \cite{bhatia1} and \cite{bhatia2}. 
In particular, the functional calculus by operator monotone functions is crucial  to study operator means 
in Kubo-Ando theory \cite{kuboando}.  
Another important motivation for the study of non-linear positive maps on $C^*$-algebras is
non-additive measure theory, which was initiated by Sugeno \cite{Su} and  Dobrakov \cite{dobrakov}. 
Choquet integrals \cite{Ch} and Sugeno integrals \cite{Su} are studied as non-linear integrals in 
non-additive measure theory. The differences between them are two operations used: 
sum and product for Choquet integrals and maximum and minimum for Sugeno integrals. 
They have partial additivities. 
Choquet integrals have  comonotonic additivity and 
Sugeno integrals have comonotonic F-additivity (fuzzy additivity). Conversely,  it is known that 
Choquet integrals and Sugeno integrals are characterized by these partial additivities,  see, for example, \cite{De}, 
\cite{schmeidler}, \cite{C-B}, \cite{C-M-R}, \cite{D-G}. 
More precisely, 
comonotonic additivity, positive homogeneity, and monotony characterize 
Choquet integrals. Similarly, comonotonic F-additivity, F-homogeneity, and monotony
characterize Sugeno integrals.
We also note that the inclusion-exclusion integral by Honda-Okazaki \cite{H-O}
for non-additive monotone measures is a general notion of non-linear integrals. 

In this paper, we introduce non-linear traces of the Choquet type and Sugeno type on a semifinite factor $M$ as a non-commutative 
analog of the Choquet integral and Sugeno integral for non-additive measures. 
We need weighted dimension function $p \mapsto \alpha(\tau(p))$ for projections $p \in M$, which is an analog of 
a monotone measure.
They have certain partial additivities. 
We show that these partial additivities characterize 
non-linear traces of both the Choquet type and Sugeno type respectively. 

Lorentz function spaces are typical examples of symmetric function spaces and 
Lorentz operator spaces are typical examples of symmetric operator spaces.  
In \cite{K-S}  using uniform submajorization, Kalton and Sukochev established a 
one-to-one correspondence between the symmetric function spaces and symmetric operator spaces. 
Based on the notion of generalized eigenvalues and singular values, 
we show that non-linear traces of the Choquet type are closely related to the Lorentz function spaces and 
Lorentz operator spaces if the weight functions $\alpha$ are concave.

If $M$ is a factor of type ${\rm I}_{\infty}$, then this corresponds non-linear traces of the Choquet type on the algebra $K(H)$ of compact operators introduced in \cite{nagisawatatani3}. In this article, we broaden it more, and we  
completely determine the condition that the associated weighted $L^p$-spaces for the non-linear traces on 
the algebra $K(H)$ of compact operators become 
quasi-normed spaces in terms of the weight functions $\alpha$ for any $0 < p < \infty$. 
We note that  
Jor-Ting Chan, Chi-Kwong Li, and Charlies C. N. Tu studied some 
classes of these unitarily invariant norms on $B(H)$ from a different view point in \cite{C-L-T}. 
For factors of type ${\rm II}$,  
we also completely determine the condition that the associated  weighted $L^p$-spaces for the non-linear traces become 
quasi-normed spaces in terms of the weight functions $\alpha$ for any $0 < p < \infty$. 

We also show that any non-linear trace of the Sugeno type gives a certain metric on the factor. 
 
This is an attempt at non-linear and non-commutative integration theory on semifinite factors.

\vspace{3mm}

This work was supported by JSPS KAKENHI Grant Numbers\linebreak
JP17K18739 and JP23K03151
and also supported by the Research Institute for Mathematical Sciences,
an International Joint Usage/Research Center located in Kyoto University.


\section{Non-linear traces of the Choquet type}
In this section,  we  introduce non-linear traces of the  Choquet type on a semifinite factor $\M$,  
which is  a non-commutative 
analog of the Choquet integrals for non-additive measures. 
We need weighted dimension function $p \mapsto \alpha(\tau(p))$ for projections $p \in \M$, which is an analog of 
a monotone measure.  
If $\M$ is a factor of type ${\rm I}_{\infty}$, then this corresponds non-linear traces of the Choquet type 
on the algebra $K(H)$ of compact operators introduced in \cite{nagisawatatani3}. 
See \cite{bhatia1}, \cite{H}, \cite{hiaipetz}, \cite{Horn-Johnson}, and \cite{simon} on basic facts of eigenvalues and singular values for matrix algebras 
and the algebras of compact operators. 

\begin{definition} \rm
Let $\Omega$ be a set and ${\mathcal B}$ a ring of sets on $\Omega$, that is, 
${\mathcal B}$ is a family of subsets of $\Omega$ which is closed under the operations 
union $\cup$ and set theoretical difference $\backslash$. Hence  ${\mathcal B}$ is also 
closed under intersection $\cap$. 
A function $\mu: {\mathcal B} \rightarrow [0, \infty]$ is  called a 
{\it monotone measure} if $\mu$ satisfies 
\begin{enumerate}
\item[$(1)$]  $\mu(\emptyset) = 0$, and 
\item[$(2)$] For any $A,B \in {\mathcal B}$, if $A \subset B$, 
then $\mu(A) \leq \mu(B)$. 

\end{enumerate}
\end{definition}

\begin{definition} \rm (Choquet integral)
Let $\Omega$ be a set and ${\mathcal B}$ a ring of sets on $\Omega$.  
Let  $\mu: {\mathcal B} \rightarrow [0, \infty]$ be a monotone measure on $\Omega$. 
Let $f$ be a non-negative measurable function on $\Omega$.  Then the Choquet integral of $f$ is defined by 
\begin{align*}
{\rm (C)} \int f d\mu & := \int_0^{\infty} \mu ( \{ x \in \Omega \ | \ f(x) \geq s \}) ds \\
& =  \int_0^{\infty} \mu ( \{ x \in \Omega \ | \ f(x) > s \}) ds.  
\end{align*}
\end{definition}

\begin{remark} \rm  If $f$ is a simple function with 
$$
f = \sum_{i=1}^{n} a_i \chi _{A_i},  \ \ \  A_i \cap A_j = \emptyset  \  ( i\not=j) 
$$
then the Choquet integral of $f$ is given by 

$$
{\rm (C)}\int f d\mu = \sum_{i=1}^{n-1} (a_{\sigma(i) }- a_{\sigma(i+1)})\mu(A_i) 
+ a_{\sigma(n) }\mu(A_n) , 
$$
where $\sigma$ is a permutation on $\{1,2,\dots n\}$ such that 
$a_{\sigma(1)} \geq a_{\sigma(2)} \geq  \dots \geq a_{\sigma(n)}$ 
and $A_i := \{\sigma(1),\sigma(2),\dots,\sigma(i)\}$. 
\end{remark}

Choquet integral has the following properties: \\
 (1) (monotonicity)  
For any  non-negative measurable functions $f$ and $g$  on $\Omega$, 

$$
0 \leq f \leq g \Rightarrow 0 \leq {\rm (C)} \int f d \mu \leq   {\rm (C)}\int g d \mu .
$$ 
\noindent
(2) (comonotone additivity) 
For any  non-negative measurable functions $f$ and $g$  on $\Omega$, 
if $f$ and $g$ are comonotone, then 
$$
{\rm (C)} \int (f + g)  d \mu
= {\rm (C)} \int f d \mu + {\rm (C)}\int g d \mu ,
$$  
where $f$ and $g$ are {\it comonotone} if 
for any $s,t \in \Omega$ we have  that 
$$
(f(s) -f(t))(g(s)-g(t)) \geq 0, 
$$
that is, 
$$
f(s) < f(t)   \Rightarrow   g(s)  \leq g(t) . 
$$ 

\noindent
(3) (positive homogeneity)
For any  non-negative measurable function $f$ on $\Omega$ and 
any scalar $k \geq 0$, 
$$
 {\rm (C)} \int kf d \mu = k  {\rm (C)} \int f d \mu , 
 $$
where $0 \cdot \infty = 0$.

\begin{remark} \rm 
It is important to note that the Choquet integral can be {\it essentially} characterized as a non-linear monotone positive functional 
which is positively homogeneous and comonotonic additive.  
\end{remark}

\begin{remark} \rm
When $\Omega = {\mathbb R}$, if $f$ and $g$ are monotone increasing, then  $f$ and $g$ are comonotone. 
\end{remark}

For a topological space $X$, let $C(X)$ be the set of all continuous functions on $X$ and 
$C(X)^+$ be the set of all non-negative continuous functions on $X$. More generally, for a $C^*$-algebra $A$, we denote by $A^+$ the set of positive operators of $A$.  

\begin{definition}  \rm Let $A$ be a $C^*$-algebra.  A non-linear positive map 
$\varphi : A^+ \rightarrow  [0, \infty]$ is called a {\it trace} if 
$\varphi$ is unitarily invariant, that is, 
$\varphi(uau^*) = \varphi(a)$ for any $a \in A^+$ and any unitary 
$u$ in the multiplier algebra $M$ of $A$.  
\begin{itemize}
  \item  $\varphi$ is {\it monotone} if 
$a \leq b$ implies  $\varphi(a) \leq  \varphi(b)$ for any $a,b \in  A^+$. 
  \item  $\varphi$ is {\it positively homogeneous} if 
$\varphi(ka) = k\varphi(a)$ for any $a \in A^+$ and any  scalar $k \geq 0$, 
where we regard $0 \cdot \infty = 0$. 
  \item $\varphi$ is {\it comonotonic additive on the spectrum} if 
$$
\varphi(f(a)  + g(a)) = \varphi(f(a)) + \varphi(g(a)) 
$$  
for any $a \in A^+$ and 
any comonotonic functions $f$ and $g$  in $C(\sigma(a))^+$ ( with $f(0) = g(0) = 0$ if $a$ is not invertible), where 
$f(a)$ is a functional calculus of $a$ by $f$ and  $C(\sigma(a))$ is 
the set of continuous functions on the spectrum $\sigma(a)$ of $a$. 
  \item $\varphi$ is {\it monotonic increasing additive on the spectrum} if 
$$
\varphi(f(a)  + g(a)) = \varphi(f(a)) + \varphi(g(a))
$$  
for any $a \in A^+$ and 
any monotone increasing functions $f$ and $g$  in $C(\sigma(a))^+$ (with $f(0)=g(0)=0$ if $a$ is not invertible). 
We may replace $C(\sigma(a))^+$ by $C([0,\infty))^+$, since any monotone increasing function in $C(\sigma(a))^+$ 
can be extended to a monotone increasing function in $C([0,\infty))^+$.  
Then by induction, 
we also have 
$$
 \varphi(\sum_{i=1}^n f_i(a)) = \sum_{i=1}^n \varphi (f_i(a))
$$
for any monotone increasing functions $f_1, f_2, \dots, f_n$ in  $C(\sigma(a))^+$  ( with  $f_i(0) = 0$ for $i=1,2,\ldots, n$ if 
$a$ is not invertible). 
\end{itemize}
\end{definition}

Now we  recall the notions of the generalized singular values (also called generalized $s$-numbers) 
on semifinite von Neumann algebras. 
Murray-von Neumann \cite{murrayvonneumann} started to study the generalized singular values 
for finite factors. The general case of semifinite von Neumann algebras was studied by M. Sonis  \cite{sonis}, 
V. I. Ovchinnikov \cite{ovchinnikov}, F. J. Yeadon \cite{yeason}, and Fack-Kosaki \cite{F-K}.
For the generalized singular values  and the majorization, see also Murray-von Neumann \cite{murrayvonneumann}, 
Kamei \cite{kamei}, Fack \cite{F}, Hiai-Nakamura \cite{H-N2}, Harada-Kosaki \cite{H-K},  and we also recommend a book \cite{L-S-Z} by Lord-Sukochev-Zanin on the Calkin correspondence for semi-finite von Neumann algebras. 
Let $\M$ be a semifinite von Neumann algebra on a Hilbert space $H$ with a faithful normal semifinite trace $\tau$.  A densely defined closed operator 
$a$ affiliated with $\M$ is said to be {\it $\tau$-measurable} if for each $\epsilon > 0$ there exists a projection $p$ in $\M$ such that  $p(H) \subset D(a)$ and $\tau(I-p) < \epsilon$. Let $a = u|a|$ be the polar decomposition of $a$ and 
$|a| = \int_0^{\infty} \lambda d e_{\lambda}$ be the spectral decomposition of $|a|$. Then $a$ is $\tau$-measurable if and only if 
$\lim _{\lambda  \to \infty} \tau(I- e_{\lambda}) = \lim _{\lambda  \to \infty} \tau(e_{({\lambda}, \infty)} )= 0$.  We denote by  $\tilde{\M}$ 
the set of  $\tau$-measurable operators. For a positive self-adjoint operator $a = \int_0^{\infty} \lambda d e_{\lambda}$ 
affiliated with $\M$, we set 
$$
\tau(a) =  \int_0^{\infty} \lambda d\tau( e_{\lambda}). 
$$
 For $p$ with $1 \leq p \leq \infty$, the non-commutative $L^p$-space $L^p(\M;\tau)$   
is the Banach space consisting of all $a \in \tilde{\M}$ with $||a||_p := \tau(|a|^p)^{1/p} < \infty$, see, for example, \cite{nelson}, \cite{segal}, \cite{terp}, and \cite{F-K}. 
Here  $L^{\infty}(\M;\tau) = \M$. 
We denote by $ \tilde{S}$ the closure of $L^1(\M)$ in  $\tilde{\M}$ in the measure topology, where the measure topology on  $\tilde{\M}$ is the topology whose fundamental 
system of neighborhoods around $0$ is given by 
$$
V(\epsilon, \delta) = \{a \in \tilde{\M} | \exists p \in \M : \text{projection s.t.}  
\|ap\| <\epsilon \text{ and }  \tau(I-p) < \delta\}
$$
Then $\M$ is dense in   $\tilde{\M}$ in the measure topology. 

\begin{definition} \rm Let $\M$ be a semifinite von Neumann algebra on a Hilbert space $H$ with a faithful normal semifinite trace $\tau$.  
Let  $a$ be a positive self-adjoint operator affiliated with $\M$ with the spectral decomposition $a =  \int_0^{\infty} \lambda d e_{\lambda}(a)$, 
where $e_t = e_{(-\infty,t]}$ is a spectral projection. 
For $t \geq 0$, the ``generalized $t$-th eigenvalue''
$\lambda_t(a)$ of $a$ is defined by 
$$
\lambda_t(a) := \inf \{s \geq 0 \; |\; \tau(e_{(s, \infty)}(a)) \leq t \} , 
$$
where we put $\inf A = + \infty$ if $A$ is an empty set, by convention.  If $a$ is in $M^+$, then $\lambda_0(a) = \|a \|$. 
Furthermore, if $a$ is a  $\tau$-measurable operator and $t > 0$, then $\lambda_t(a) < \infty$ and it is known that 
$\lambda_t(a)$ is also expressed by 
$$
\lambda_t(a) = \inf \{\|ap\| \ |\; \  p \in \M : \text{ projection s.t. } \tau(I-p) \leq t\}
$$
We also have a min-max type expression 
$$
\lambda_t(a) = \inf \{\sup _{\xi \in pH, \| \xi \| = 1} \langle a\xi,\xi \rangle  \ | \  p \in \M : \text{ projection s.t. } \tau(I-p) \leq t\}
$$
For a general  operator $a$ which is affiliated with $\M$, 
the `` generalized $t$-th singular value''  (i.e. generalized s-number)   
$\mu_t(a)$ of $a$ is defined by 
$\mu_t(a) := \lambda_t(|a|) $.  The map $\mu(a) := [0,\infty) \ni t \mapsto \mu_t(a) \in [0,\infty]$ 
is sometimes called the generalized singular value function of $a$ and satisfies the following properties:
\begin{enumerate}
\item[(1)] The generalized singular value function $\mu(a):t \mapsto \mu_t(a)$ is decreasing and right-continuous. 
\item[(2)] If $0 \leq a \leq b$, then  $\mu_t(a) \leq  \mu_t(b)$. 
\item[(3)] $\mu_{s +t}(a+b) \leq  \mu_s(a)  + \mu_t(b)$. 
\item[(4)] $\mu_{s +t}(ab) \leq  \mu_s(a)  \mu_t(b)$. 
\item[(5)] $\mu_{t}(abc)  \leq \|a\|\mu_{t}(b) \|c\|$.
\item[(6)] $\mu_t(a^*) = \mu_t(a)$. 
\item[(7)] For any unitary $u \in M$, $\mu_t(uau^*) = \mu_t(a)$. 
\item[(8)] $|\mu_t(a) - \mu_t(b)| \leq \|a-b\|$.
\item[(9)] If $a \geq 0$ and $f: [0,\infty) \rightarrow [0,\infty)$ is a continuous and increasing function, then 
$\mu_t(f(a)) = f(\mu_t(a))$. 
\item[(10)] If $\tau (I) = 1$, then $\mu_t(a) = 0$ for all $t >1$. 
\end{enumerate}
See, for example, \cite{F-K}, \cite{fan}, \cite{L-S-Z}, and \cite{ovchinnikov}.

For a self-adjont operator $a \in \M$ with a finite trace $\tau$ ,  ``generalized $t$-th eigenvalue''
$\lambda_t(a)$ of $a$  can be  defined by 
$$
\lambda_t(a) := \inf \{s \in \mathbb R \; |\; \tau(e_{(s, \infty)}(a)) \leq t \} , \ \ (t \in [0, \tau(I)).
$$
See \cite{petz} for example.
But we do not use it in this paper.  For a positive operator $a \in M$,  $\lambda_t(a) = \mu_t(a)$. So 
this does not cause any confusion. 
\end{definition}

\begin{example} \rm If $\M=B(H)$ is a factor of type ${\rm I}_{\infty}$, then $\tilde{\M} = B(H)$ and 
$\tilde{S} = K(H)$.  
\end{example}

\begin{example} \rm Let  $\M=L^{\infty}(\Omega,\mu)$ for some semifinite measure space $(\Omega,\mu)$. 
Then   $\tilde{M}$ consists of measurable functions $f$ on $\Omega$ such that $f$ is bounded except on a set of finite 
measure. In this case, ``generalized $t$-th eigenvalue ``
$\lambda_t(f)$ of positive function $f$ corresponds to the decreasing rearrangement $f^*$ of $f$:
$$
 f^*(t) := \inf \{s \geq 0 \; | \; \mu(\{x  \in \Omega \ | \ |f(x)| > s \}) \leq t \} ,
$$
\end{example}

Now we shall consider non-linear traces of the Choquet type on a semifinite factor $\M$. 
First,  we assume that $\M$ is a factor of type I.  When $\M = M_n({\mathbb C})$, 
we introduced non-linear traces of the Choquet type in \cite{nagisawatatani2}.  When $\M = B(H)$ for a 
separable infinite-dimensional Hilbert space $H$, all proper ideals of $B(H)$ are contained in the ideal $K(H)$ 
of the compact operators. Therefore we studied non-linear traces of the Choquet type on $K(H)$  in \cite{nagisawatatani3}. 
Here we  broaden it more and we  
completely determine the condition that the associated weighted $L^p$-spaces for the non-linear traces on 
$K(H)$ become 
quasi-normed spaces in terms of the weight functions $\alpha$ for any $0 < p < \infty$. 
We recall some notations on non-linear traces of the Choquet type on $K(H)$.  

\begin{definition} \rm
Let $\alpha: {\mathbb N}_0 :=  {\mathbb N} \cup \{0\}  \rightarrow [0, \infty)$ be a 
monotone increasing function with $\alpha(0) = 0$.  
Any $a \in K(H)^+$ can be written as 
$a = \sum_{n=1}^{\infty} \lambda_n(a)p_n$, where each $p_n$ is a one dimensional spectral projection of $a$ 
with  $\sum_{n=1}^{\infty} p_n \leq I$, and 
$$
\lambda(a) := (\lambda_1(a),\lambda_2(a),\ldots, \lambda_n(a), \ldots)
$$
is the list of the 
eigenvalues of $a$ in decreasing order :
$\lambda_1(a) \geq \lambda_2(a) \geq \cdots \geq \lambda_n(a) \geq \cdots $ with 
counting multiplicities, where  the sequence $(\lambda_n(a))_n$ converges to zero.  
In order to make the sequence uniquely determined, we use the following convention: 
If $a \in  K(H)^+$ is not a finite-rank operator, then each $\lambda_n(a) \not= 0$. 
If $a \in  K(H)^+$ is a finite-rank operator, $\lambda(a)$ is given by adding zeroes to the list of eigenvalues of $a$. 
In both cases, the spectrum $\sigma(a)$ of $a$ is given by 
$\sigma(a) = \cup_{n = 1}^{\infty} \{\lambda_n(a)\}   \cup \{0\}$.  We call such 
$\lambda(a)$ the {\it eigenvalue sequence} of $a \in  K(H)^+$. 

\begin{remark} \rm 
In this paper, for a compact positive operator $a \in K(H)^+$, we denote by $\lambda_1(a) = \|a\|$ the 
largest first eigenvalue of $a$.  But for a positive operator $a$ in a factor of Type ${\rm II}$, we denote by 
$\lambda_0(a)= \mu_0(a) =  \|a\|$ the largest generalized 0-th eigenvalue of $a$.  Be careful. 
\end{remark}

We define the non-linear trace $\varphi_{\alpha} : K(H)^+ \rightarrow  [0, \infty]$ of the Choquet type
associated with $\alpha$ by 
$$
\varphi_{\alpha}(a) 
 =  \sum_{i=1} ^{\infty} ( \lambda_i(a)- \lambda_{i+1}(a)) \alpha(i).
$$
Note that $\varphi_{\alpha}$ is lower semi-continuous on $K(H)^+$ in the norm.  

Put 
$c_n = \alpha(n) - \alpha(n-1) \geq 0$ and $c_1 = \alpha(1) - \alpha(0) = \alpha(1)$. 
Then $\alpha(n) = \sum_{i=1}^n c_i$ and 
the non-linear trace $\varphi_{\alpha}$ of the Choquet type associated with $\alpha$ is also 
described as follows: For  $a \in K(H)^+$, 
 $$
 \varphi_{\alpha}(a) := \sum_{i=1} ^{\infty}  \lambda_i(a) (\alpha(i) - \alpha(i-1)) 
 = \sum_{i=1} ^{\infty}  \lambda_i(a) c_i .
 $$
 Conversely for any sequence $c =(c_n)_n$ with each $c_i \in [0,\infty)$, the non-linear trace $\varphi$ 
 defined by 
 $$
  \varphi(a) = \sum_{i=1} ^{\infty}  \lambda_i(a) c_i
 $$
 is a trace of the Choquet type associated with some $\alpha$ such that  $ \alpha(n) = \sum_{i=1}^n c_i$. 

\end{definition}

For $p \geq 1$, we have studied the condition that the weighted Schatten-von Neumann $p$-class  ${\mathcal C}^{\alpha}_p(H)$ for non-linear traces  $\varphi_{\alpha}$ 
of the Choquet type becomes normed spaces in terms of $\alpha$ in \cite{nagisawatatani3}. 

\begin{theorem} \label{thm:norm} (N-W \cite{nagisawatatani3})
Let $\varphi = \varphi_{\alpha}$ be  a non-linear trace of the Choquet type associated with  
a monotone increasing function $\alpha: {\mathbb N}_0  \rightarrow [0, \infty)$  with $\alpha(0) = 0$ and 
$\alpha(1) > 0$.  Fix $p \geq 1$.  
Define $|||a|||_{\alpha,p}:= \varphi_{\alpha}(|a|^p)^{1/p}$ for $a \in K(H)$. 
Then the following conditions are equivalent: 
\begin{enumerate}
\item[$(1)$] $\alpha$ is concave in the sense that 
$\dfrac{\alpha(i +1) + \alpha(i - 1)}{2} \leq \alpha(i), \;  (i =1,2,3, $ $\dots)$.
\item[$(2)$] $(c_i)_i$ is a decreasing sequence: $c_1 \geq c_2 \geq \cdots \geq c_n \geq \cdots $.
\item[$(3)$] $||| \ |||_{\alpha,p}$ satisfies the triangle inequality: for any $a,b \in K(H) $, 
$|||a + b|||_{\alpha,p} \leq |||a |||_{\alpha,p} + ||| b|||_{\alpha,p}$ admitting $+\infty$. 
\end{enumerate}
\end{theorem}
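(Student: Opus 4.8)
The plan is to establish the chain $(1)\Leftrightarrow(2)$, then $(2)\Rightarrow(3)$, and finally the contrapositive $\neg(2)\Rightarrow\neg(3)$. The equivalence $(1)\Leftrightarrow(2)$ is immediate: the concavity inequality $\frac{\alpha(i+1)+\alpha(i-1)}{2}\le\alpha(i)$ rewrites as $\alpha(i+1)-\alpha(i)\le\alpha(i)-\alpha(i-1)$, i.e. $c_{i+1}\le c_i$, so $\alpha$ is concave exactly when $(c_i)_i$ is decreasing. Thus the genuine content is the equivalence of the decreasing-weight condition with the triangle inequality for $|||\cdot|||_{\alpha,p}$.

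For $(2)\Rightarrow(3)$ I would reduce everything to singular value sequences, writing $|||a|||_{\alpha,p}^p=\sum_i \mu_i(a)^p c_i$ (using $\mu_i(|a|^p)=\mu_i(a)^p$). The argument then rests on three ingredients. First, the Ky Fan/Fack--Kosaki submajorization $\sum_{i=1}^n\mu_i(a+b)\le\sum_{i=1}^n(\mu_i(a)+\mu_i(b))$ for all $n$, i.e. $\mu(a+b)$ is weakly submajorized by the (already decreasing) sequence $\mu(a)+\mu(b)$. Second, since $t\mapsto t^p$ is increasing and convex with value $0$ at $0$ (here $p\ge1$), weak submajorization is preserved under it, so $(\mu_i(a+b)^p)_i$ is weakly submajorized by $((\mu_i(a)+\mu_i(b))^p)_i$; applied prefix by prefix, this follows from the characterization of weak submajorization by increasing convex functions. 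Third --- and this is where the hypothesis enters decisively --- if $(c_i)_i$ is decreasing then $\sum_i \xi_i c_i\le\sum_i\eta_i c_i$ whenever the decreasing nonnegative sequence $(\xi_i)$ is weakly submajorized by $(\eta_i)$, which is a one-line Abel summation using $c_i-c_{i+1}\ge0$. Combining these gives $|||a+b|||_{\alpha,p}^p\le\sum_i(\mu_i(a)+\mu_i(b))^p c_i$, and a final application of Minkowski's inequality in the weighted space $\ell^p(c)$ yields $|||a+b|||_{\alpha,p}\le|||a|||_{\alpha,p}+|||b|||_{\alpha,p}$.

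For the converse I would argue by contraposition. Assuming some $k$ with $c_k<c_{k+1}$, I construct finite-rank positive operators violating the triangle inequality. Let $E$ be a rank-$(k+1)$ projection with orthonormal eigenbasis $e_1,\dots,e_{k+1}$, let $P_A,P_B$ be the projections onto $\operatorname{span}\{e_1,\dots,e_k\}$ and $\operatorname{span}\{e_1,\dots,e_{k-1},e_{k+1}\}$, and set $a=ME+P_A$, $b=ME+P_B$ for a parameter $M\ge0$. A direct eigenvalue count gives $\mu(a)=\mu(b)$ (namely $k$ values equal to $M+1$ followed by one value $M$), while $a+b$ has $k-1$ eigenvalues $2M+2$ and two eigenvalues $2M+1$. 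Writing $D:=|||a+b|||_{\alpha,p}^p-2^p|||a|||_{\alpha,p}^p$, the $\alpha(k-1)$/$\alpha(k)$ bookkeeping collapses and after cancellation one is left with $D=c_k\big((2M+1)^p-(2M+2)^p\big)+c_{k+1}\big((2M+1)^p-(2M)^p\big)$.

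The main obstacle is exactly this last step for $p>1$: when $p=1$ one gets $D=c_{k+1}-c_k>0$ outright (the pure-projection case $M=0$ already works), but for $p>1$ the $p$-th power spreads the eigenvalues apart and a fixed rank-$2$ example is inconclusive. The device that resolves it is the flat background $ME$: as $M\to\infty$ both bracketed differences behave like $p(2M)^{p-1}$ by the mean value theorem, so $D/\big(p(2M)^{p-1}\big)\to c_{k+1}-c_k>0$, forcing $D>0$ for all sufficiently large $M$. Hence the triangle inequality fails, completing $\neg(2)\Rightarrow\neg(3)$. I expect the prefix-by-prefix bookkeeping in the submajorization-preservation step and the asymptotic estimate of $D$ to be the only delicate points; everything else is standard singular value calculus together with Minkowski's inequality.
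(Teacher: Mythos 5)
Your proof is correct. Note that the paper itself does not prove Theorem \ref{thm:norm}: it is quoted from the earlier work \cite{nagisawatatani3}, so there is no in-paper argument to compare against line by line. Judged on its own terms, your chain is sound: $(1)\Leftrightarrow(2)$ is the stated telescoping identity $c_{i+1}\le c_i$; the direction $(2)\Rightarrow(3)$ via Ky Fan weak submajorization $\mu(a+b)\prec_w \mu(a)+\mu(b)$, its preservation under the increasing convex map $t\mapsto t^p$ ($p\ge 1$), the Abel-summation lemma for decreasing weights, and Minkowski in $\ell^p(c)$ is the standard symmetric-norm argument and each ingredient is correctly invoked (the Abel step only needs $\xi=\mu(a+b)^p$ decreasing and $c$ decreasing nonnegative, which hold); and the contrapositive $\neg(2)\Rightarrow\neg(3)$ is handled by a genuine counterexample, with the computation of $D$ and the asymptotics $D\sim p(2M)^{p-1}(c_{k+1}-c_k)>0$ both checking out (including the edge case $k=1$, where the $(2M+2)$-block is empty but the formula for $D$ is unchanged).

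It is worth observing that your route differs in flavor from the one the authors use for the adjacent quasi-norm result (Theorem \ref{thm:qnorm}) in this paper: there they avoid full Ky Fan majorization and instead use the doubled-index Weyl bounds $s_{2i-1}(a+b),\,s_{2i}(a+b)\le s_i(a)+s_i(b)$ together with an explicit Abel rearrangement of the weights $c(2i-1)+c(2i)$. That device costs a factor $\alpha(2n)/\alpha(n)$ and so only yields a quasi-norm, whereas your majorization argument exploits monotonicity of $(c_i)$ to get the sharp constant $1$; conversely, their method works without concavity. Your counterexample for $\neg(2)\Rightarrow\neg(3)$, with the flat background $ME$ to handle $p>1$, is a nice self-contained replacement for whatever argument appears in \cite{nagisawatatani3}.
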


\begin{remark} \rm
An operator $a \in K(H)$ is said to be a {\it weighted Schatten-von Neumann $p$-class operator for a non-linear trace}  $\varphi_{\alpha}$  {\it of the Choquet type} if 
$|||a|||_{\alpha,p}= \varphi_{\alpha}(|a|^p)^{1/p}< \infty$.  The weighted Schatten-von Neumann $p$-class  ${\mathcal C}^{\alpha}_p(H)$ is defined as 
the set of all weighted Schatten-von Neumann $p$-class  operators for a non-linear trace $\varphi_{\alpha}$. We also call ${\mathcal C}^{\alpha}_p(H)$
the weighted $L^p$-space for $\varphi_{\alpha}$.   
Assume that  $\alpha$ is concave.
Then the weighted $L^p$-space ${\mathcal C}^{\alpha}_p(H)$ 
for a non-linear trace $\varphi_{\alpha}$ 
is a Banach space with respect to the norm $||| \ |||_{\alpha,p}$.  Moreover  ${\mathcal C}^{\alpha}_p(H)$ 
is a $*$-ideal of $B(H)$.   
\end{remark}

Now we  shall 
completely determine the condition that the associated weighted $L^p$-spaces for the non-linear traces on $K(H)$ become 
quasi-normed spaces in terms of the weight functions $\alpha$ for any $0 < p < \infty$. We recall the definition of 
quasi-norms (see, for example, \cite{wilansky}). 

\begin{definition} \rm 
Let $X$ be a complex vector space.
A function $\varphi$ on $X$ is called a quasi-norm if there exists some constant $c>1$ such that for any  $x,y\in X$ and $\alpha\in \mathbb{C}$, $\varphi$ satisfies the following: 
\begin{enumerate}
  \item[(1)] $\varphi(x) \ge 0$.
  \item[(2)] $\varphi(\alpha x)=|\alpha|\varphi(x)$.
  \item[(3)] $\varphi(x+y) \le c (\varphi(x) + \varphi(y))$.
\end{enumerate}
A typical example of quasi-norms is the function $\varphi(x)=(\sum_{i=1}^n |x_i|^p)^{1/p}$ on $X=\mathbb{C}^n$.
When $0<p<1$, $\varphi$ is not a norm on $X$ and satisfies the following relation:
\[   \varphi(x+y) \le 2^{\frac{1}{p}-1}(\varphi(x)+\varphi(y) ).   \]
\end{definition}

\begin{lemma} [Weyl's inequality, see \cite{bhatia1} for matrices and \cite{fan} for compact operators]
For any positive compact operators $a,b \in K(H)^+$, we have
\[   \lambda_{i+j-1}(a+b) \le \lambda_i(a) + \lambda_j(b). \]
\end{lemma}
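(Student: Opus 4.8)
The plan is to prove Weyl's inequality through the variational (Courant--Fischer) min--max characterization of the eigenvalues, which for a positive compact operator $a$ with the convention $\lambda_1(a) = \|a\|$ reads
\[
\lambda_n(a) = \min_{\substack{V \subset H \\ \dim V = n-1}} \ \max_{\substack{\xi \perp V \\ \|\xi\| = 1}} \langle a \xi, \xi \rangle .
\]
This is precisely the discrete specialization of the min--max expression for $\lambda_t$ recorded in the definition of the generalized eigenvalue, taking $\M = B(H)$ with the standard trace, so that $\tau(I-p) \le n-1$ means that $p$ has corank at most $n-1$, i.e. $I-p$ projects onto a subspace of dimension at most $n-1$.

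First I would fix indices $i, j \ge 1$ and choose, using the displayed characterization, a subspace $V_a$ with $\dim V_a = i-1$ realizing the outer minimum for $\lambda_i(a)$, and a subspace $V_b$ with $\dim V_b = j-1$ realizing the outer minimum for $\lambda_j(b)$; thus $\langle a\xi,\xi\rangle \le \lambda_i(a)$ for every unit vector $\xi \perp V_a$, and $\langle b\xi,\xi\rangle \le \lambda_j(b)$ for every unit vector $\xi \perp V_b$. Next I would set $V := V_a + V_b$, so that $\dim V \le (i-1)+(j-1) = i+j-2$, and enlarge $V$ if necessary to a subspace $V'$ of dimension exactly $i+j-2$. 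For any unit vector $\xi \perp V'$ we have $\xi \perp V_a$ and $\xi \perp V_b$, whence
\[
\langle (a+b)\xi, \xi \rangle = \langle a\xi, \xi \rangle + \langle b\xi, \xi \rangle \le \lambda_i(a) + \lambda_j(b) .
\]
Taking the maximum over such $\xi$ and then feeding $V'$ into the outer minimum defining $\lambda_{i+j-1}(a+b)$ (whose test spaces have dimension $(i+j-1)-1 = i+j-2$) yields the asserted inequality.

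The only delicate point is the bookkeeping of the index shift: with the convention $\lambda_1 = \|\cdot\|$, the eigenvalue $\lambda_n$ is governed by test spaces of dimension $n-1$, so the dimensions combine as $(i-1)+(j-1)$ and produce the index $i+j-1$ on the left, not $i+j$. No genuine obstacle arises, since positivity of $a$ and $b$ makes the quadratic form $\langle(a+b)\xi,\xi\rangle$ split exactly and keeps each summand nonnegative. Alternatively, the inequality drops out immediately from property $(3)$ of the generalized singular value function stated earlier, namely $\mu_{s+t}(a+b) \le \mu_s(a) + \mu_t(b)$, upon substituting $s = i-1$ and $t = j-1$ and using the identity $\mu_{n-1}(c) = \lambda_n(c)$ valid for positive compact $c$; I would present the self-contained min--max argument as the main route and record this shorter derivation as a remark.
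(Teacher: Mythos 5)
Your argument is correct. Note first that the paper itself supplies no proof of this lemma: it is stated as a classical fact with pointers to Bhatia (for matrices) and Fan (for compact operators), so there is nothing internal to compare against; your Courant--Fischer argument is precisely the standard proof found in those references, and your index bookkeeping (test spaces of dimension $n-1$ for $\lambda_n$, hence $(i-1)+(j-1)=i+j-2$ giving the index $i+j-1$) is right. Two small points are worth making explicit. First, the existence of a minimizing subspace $V_a$ with the stronger pointwise property $\langle a\xi,\xi\rangle\le\lambda_i(a)$ for \emph{every} unit $\xi\perp V_a$ should be justified by taking $V_a$ to be the span of eigenvectors for $\lambda_1(a),\dots,\lambda_{i-1}(a)$; in the degenerate case where $a$ has rank $r<i-1$ one takes $V_a=\operatorname{ran}(a)$ padded by an $(i-1-r)$-dimensional subspace of $\ker a$, so that $\xi\perp V_a$ forces $\langle a\xi,\xi\rangle=0=\lambda_i(a)$ under the paper's convention of extending the eigenvalue sequence by zeroes. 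Second, your closing remark is also sound and arguably closer to the paper's own toolkit: property $(3)$ of the generalized singular value function, $\mu_{s+t}(a+b)\le\mu_s(a)+\mu_t(b)$, specialized to $\M=B(H)$ with the standard trace and $s=i-1$, $t=j-1$, together with $\mu_{n-1}(c)=\lambda_n(c)$ for positive compact $c$, gives the inequality in one line; the cost is that property $(3)$ is itself quoted from Fack--Kosaki rather than proved, so your self-contained min--max route is the more informative one to present.
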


For $a\in K(H)$, we use the notation $s_i(a)$ which means the $i$-th singular value of $a$ and $s_i(a)=\lambda_i(|a|)$. 
Since  $s_{i+j-1}(a+b) \le s_i(a) + s_j(b)$, we remark that
\[  s_{2i}(a+b) \le s_i(a)+s_{i+1}(b)\le s_i(a)+s_i(b), \text{ and } s_{2i-1}(a+b) \le   s_i(a)+s_i(b).  \]

In order to determine the condition on $\alpha$ when $\varphi_{\alpha}(|a|^p)^{1/p}$ becomes a quasi-norm, 
it is sufficient to consider it on the set $F(H)$ of finite rank operators. 
\begin{theorem}
\label{thm:qnorm}
Let $\varphi = \varphi_{\alpha}$ be  a non-linear trace of the Choquet type associated with  
a monotone increasing function $\alpha: {\mathbb N}_0  \rightarrow [0, \infty)$  with $\alpha(0) = 0$ and 
$\alpha(1) > 0$.  For $p$ with  $0 < p < \infty$.  
Define $|||a|||_{\alpha,p}:= \varphi_{\alpha}(|a|^p)^{1/p}$ for $a \in F(H)$. 
Then the following conditions are equivalent: 
\begin{enumerate}
  \item[$(1)$]  There exists a positive number $M$ such that
\[  \varphi_\alpha(|a+b|) \le M (\varphi_\alpha(|a|) + \varphi(|b|) ) \text{ for any } a,b\in F(H).  \]
  \item[$(2)$]  The sequence $(\dfrac{\alpha(2n)}{\alpha(n)})_{n \ge 1}$ is bounded.
  \item[$(3)$]  $||| \cdot |||_{\alpha,p}$ is a quasi-norm on $F(H)$ for any $p\ge1$.
  \item[$(4)$]  $||| \cdot |||_{\alpha,p}$ is a quasi-norm on $F(H)$ for any $0<p<1$.
  \item[$(5)$]  $||| \cdot |||_{\alpha,p}$ is a quasi-norm on $F(H)$ for some $p>0$.
\end{enumerate}
\end{theorem}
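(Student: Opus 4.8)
The plan is to make the doubling condition (2) the hub: I would prove $(2)\Rightarrow(1),(3),(4)$ by a single estimate, obtain $(3)\Rightarrow(5)$ and $(4)\Rightarrow(5)$ for free (``for all $p$'' entails ``for some $p$''), and close the loop with $(1)\Rightarrow(2)$ and $(5)\Rightarrow(2)$ by testing against explicit projections. Everything is done on $F(H)$, where all the relevant sums are finite. I first record that properties (1) and (2) of the quasi-norm definition are immediate for $|||\cdot|||_{\alpha,p}$ (positivity is clear, and homogeneity is inherited from the positive homogeneity of $\varphi_\alpha$), so the entire content is the quasi-triangle inequality. Note also $\alpha(n)\ge\alpha(1)>0$, so the ratio $\alpha(2n)/\alpha(n)$ is well defined.

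For the key estimate, set $K:=\sup_n \alpha(2n)/\alpha(n)$ (finite under (2)) and $C_p:=\max\{1,2^{p-1}\}$, so that $(x+y)^p\le C_p(x^p+y^p)$ for $x,y\ge0$ and all $p>0$. Using $\varphi_\alpha(|a|^p)=\sum_{i\ge1}s_i(a)^p c_i$, I would split the sum defining $\varphi_\alpha(|a+b|^p)$ into odd and even indices and apply the singular value inequalities $s_{2i-1}(a+b),\,s_{2i}(a+b)\le s_i(a)+s_i(b)$ recorded after Weyl's inequality, to get
\[ \varphi_\alpha(|a+b|^p)\le \sum_{i\ge1}(s_i(a)+s_i(b))^p(c_{2i-1}+c_{2i})\le C_p\sum_{i\ge1}(s_i(a)^p+s_i(b)^p)(c_{2i-1}+c_{2i}). \]
Putting $\beta(n):=\alpha(2n)$ so that $\beta(i)-\beta(i-1)=c_{2i-1}+c_{2i}$, the right-hand side is $C_p(\varphi_\beta(|a|^p)+\varphi_\beta(|b|^p))$. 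Now using the Choquet representation $\varphi_\beta(x)=\sum_{i\ge1}(\mu_i-\mu_{i+1})\alpha(2i)$, where $\mu_i$ are the decreasing eigenvalues of $x$ and $\mu_i-\mu_{i+1}\ge0$, the bound $\alpha(2i)\le K\alpha(i)$ yields $\varphi_\beta(x)\le K\varphi_\alpha(x)$. Hence
\[ \varphi_\alpha(|a+b|^p)\le C_pK\,(\varphi_\alpha(|a|^p)+\varphi_\alpha(|b|^p)). \]
Taking $p=1$ (where $C_1=1$) gives exactly (1) with $M=K$; for general $p$, raising to the power $1/p$ and using $X^p+Y^p\le 2(X+Y)^p$ (valid for all $p>0$) turns this into $|||a+b|||_{\alpha,p}\le(2C_pK)^{1/p}(|||a|||_{\alpha,p}+|||b|||_{\alpha,p})$, proving (3) and (4).

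For the converse implications $(1)\Rightarrow(2)$ and $(5)\Rightarrow(2)$, I would test with $a$ the rank-$n$ projection onto $\mathrm{span}(e_1,\dots,e_n)$ and $b$ the rank-$n$ projection onto $\mathrm{span}(e_{n+1},\dots,e_{2n})$ for an orthonormal system $(e_i)_i$ in $H$. Then $a+b$ is the rank-$2n$ projection, and since $|a|^p=a$, $|b|^p=b$, $|a+b|^p=a+b$ are projections, one computes $\varphi_\alpha(|a|^p)=\varphi_\alpha(|b|^p)=\alpha(n)$ and $\varphi_\alpha(|a+b|^p)=\alpha(2n)$. Substituting into (1) gives $\alpha(2n)\le 2M\alpha(n)$, and substituting into the $p$-quasi-norm inequality of (5) gives $\alpha(2n)^{1/p}\le 2C\,\alpha(n)^{1/p}$, i.e. $\alpha(2n)/\alpha(n)\le(2C)^p$; in both cases the ratio is bounded, which is (2). (If any constant obtained is $\le1$ it may be enlarged to exceed $1$, as required in the definition of a quasi-norm.)

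The delicate point, and the one I would highlight, is matching the index halving in $s_{2i}(a+b)\le s_i(a)+s_i(b)$ with the right combinatorial condition on $\alpha$: the factor-$2$ jump in the index is exactly what forces the comparison weight $\beta(n)=\alpha(2n)$, and boundedness of $\alpha(2n)/\alpha(n)$ is precisely the doubling property needed to reabsorb $\beta$ into $\alpha$ via the Choquet/Abel summation, where monotonicity of the eigenvalues $s_i^p$ is essential. I expect no analytic difficulty, since on $F(H)$ every sum is finite; the real work lies in choosing $\beta$ and verifying this summation-by-parts comparison, and in selecting the two orthogonal projections that saturate the doubling ratio for the converse.
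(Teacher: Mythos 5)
Your proof is correct and follows essentially the same route as the paper: the same odd/even index splitting combined with the consequence $s_{2i-1}(a+b),\,s_{2i}(a+b)\le s_i(a)+s_i(b)$ of Weyl's inequality, the same Abel-summation comparison of $\sum_i (c_{2i-1}+c_{2i})\,s_i(\cdot)^p$ against $K\varphi_\alpha$ via the doubling condition, and the same test with two orthogonal rank-$n$ projections for the converse implications. The only (minor but tidy) difference is that you handle all $p>0$ uniformly by applying the pointwise bound $(x+y)^p\le C_p(x^p+y^p)$ inside the sum and extracting the $1/p$-th root at the end via $(X+Y)^{1/p}\le 2^{1/p}\bigl(X^{1/p}+Y^{1/p}\bigr)$, whereas the paper splits into the cases $p\ge 1$ and $0<p<1$ and uses the $\ell^p$ triangle (resp.\ quasi-triangle) inequality; both yield valid quasi-norm constants.
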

\begin{proof}
$(1)\Rightarrow(5)$ The condition (1) means that $|||\cdot|||_{\alpha,1}$ is a quasi-norm on $F(H)$.

$(5)\Rightarrow(2)$  Let $p>0$ and 
\[   \varphi_\alpha(|a+b|^p)^{1/p} \le L (\varphi_\alpha(|a|^p)^{1/p} + \varphi_\alpha(|b|^p)^{1/p} ).  \]
Choose  $e$ and $f$ are relatively orthogonal projections with dimension $n$ and 
put $a=e$ and $b=f$.
Then $\varphi_\alpha(|a+b|^p) = \varphi_\alpha(e+f) = \sum_{i=1}^{2n}c(i) = \alpha(2n)$ and
$\varphi_\alpha(|a|^p) = \varphi_\alpha(e)=\sum_{i=1}^n c(i) =\alpha(n)$.
Similarly $ \varphi_\alpha (|b|^p) = \alpha (n)$.
By the assumption we have  $\alpha(2n)^{1/p} \le 2L\alpha(n)^{1/p}$ for any $n$.
So $\frac{\alpha(2n)}{\alpha(n)} \le (2L)^p$.

$(2)\Rightarrow(3)$ By the assumption, we may choose a positive number $L$ such that $\alpha(2n)\le L\alpha(n)$ for all $n$.
This means that $\sum_{i=1}^{2n}c(i) \le L \sum_{i=1}^n c(i)$ for all $n$.
We remark that
\[   (\sum_{i=1}^\infty |x_i+y_i|^p)^{1/p} \le (\sum_{i=1}^\infty |x_i|^p)^{1/p} +  (\sum_{i=1}^\infty |y_i|^p)^{1/p}  \]
for $x_i,y_i\in \mathbb{C}$, since $p\ge1$.

\begin{align*}
    &  \varphi_\alpha(|a+b|^p)^{1/p} = \Bigl( \sum_{i=1}^\infty c(i)s_i(a+b)^p\Bigr)^{1/p}  \\
  = & \Bigl( \sum_{i=1}^\infty c(2i-1)s_{2i-1}(a+b)^p + \sum_{i=1}^\infty c(2i)s_{2i}(a+b)^p \Bigr)^{1/p}  \\
  \le &  \Bigl( \sum_{i=1}^\infty c(2i-1)(s_i(a) +s_i(b))^p + \sum_{i=1}^\infty c(2i)(s_i(a) + s_i(b))^p \Bigr)^{1/p}  \\
  = & \Bigl( \sum_{i=1}^\infty (c(2i-1)+c(2i))(s_i(a) +s_i(b))^p \Bigr)^{1/p}  \\
  = &  \Biggl( \sum_{i=1}^\infty \Bigl( (c(2i-1)+c(2i))^{1/p}(s_i(a) +s_i(b)) \Bigr)^p \Biggr)^{1/p}  \\
 \le &  \Biggl( \sum_{i=1}^\infty \Bigl( (c(2i-1)+c(2i))^{1/p}s_i(a) \Bigr)^p \Biggr)^{1/p} 
       + \Biggl( \sum_{i=1}^\infty \Bigl( (c(2i-1)+c(2i) )^{1/p}s_i(b)\Bigr)^p \Biggr)^{1/p} ,
\end{align*}
where we apply the $\ell^p$-subadditivity for elements
$( (c(2i-1)+c(2i))^{1/p}s_i(a) )_i$ and  $( (c(2i-1)+c(2i))^{1/p}s_i(b))_i$ in $\ell^p$. 
Then the first term  can be computed as follows:
\begin{align*}
    &   \Biggl( \sum_{i=1}^\infty \Big((c(2i-1)+c(2i))^{1/p}s_i(a)\Bigl)^p \Biggr)^{1/p}  
  =   \Bigl( \sum_{i=1}^\infty (c(2i-1)+c(2i))(s_i(a))^p \Bigr)^{1/p}  \\
  = &  \Bigl( (c(1)+c(2))s_1(a)^p + (c(3)+c(4))s_2(a)^p + (c(5)+c(6))s_3(a)^p + \cdots \Bigr)^{1/p} \\
  = &  \Bigl( (c(1)+c(2))(s_1(a)^p-s_2(a)^p) \\
    &  \qquad \qquad + (c(1)+c(2)+c(3)+c(4))(s_2(a)^p-s_3(a)^p)  \\
    &  \qquad \qquad + (c(1)+c(2)+c(3)+c(4)+c(5)+c(6))(s_3(a)^p-s_4(a)^p) + \cdots \Bigr)^{1/p}  \\
  \le &  \Bigl( L c(1)(s_1(a)^p-s_2(a)^p) + L (c(1)+c(2))(s_2(a)^p-s_3(a)^p)  \\
    & \qquad \qquad + L (c(1)+c(2)+c(3)) (s_3(a)-s_4(a))^p + \cdots \Bigr)^{1/p}  \\
  = & L^{1/p} \Bigl( c(1)s_1(a)^p + c(2)s_2(a)^p + c(3) s_3(a)^p + \cdots \Bigr)^{1/p} = L^{1/p} \varphi_\alpha(|a|^p)^{1/p} .
\end{align*}
By a similar calculation, we can get
\[  \Biggl( \sum_{i=1}^\infty \Bigl( (c(2i-1)+c(2i))^{1/p}s_i(b)\Bigr)^p \Biggr)^{1/p} \le L^{1/p} \varphi_\alpha(|b|^p)^{1/p} .  \]
So we have $\varphi_\alpha(|a+b|^p)^{1/p} \le L^{1/p} \Bigl(\varphi_\alpha(|a|^p)^{1/p} + \varphi_\alpha(|b|^p)^{1/p} \Bigr)$.

In particular, we remark that the implication $(2)\Rightarrow(1)$ is also proved.
So we have $(1)\Leftrightarrow(5)\Leftrightarrow(2)\Rightarrow(3)$.

$(2)\Rightarrow(4)$ We also assume that $\alpha(2n)\le L\alpha(n)$ for all $n$.
This means that $\sum_{i=1}^{2n}c(i) \le L \sum_{i=1}^n c(i)$ for all $n$.
We also remark that 
\[   s_{2i-1}(a+b)^p \le s_i(a)^p+s_i(b)^p  \text{ and } s_{2i}(a+b)^p \le s_i(a)^p+s_i(b)^p  \] 
since $x^p\le y^p+z^p$ for any $x,y,z \ge0$ with $x\le y+z$ and $0<p<1$.

\begin{align*}
    &  \varphi_\alpha(|a+b|^p)^{1/p} = \Bigl(\sum_{i=1}^\infty c(i)s_i(a+b)^p\Bigr)^{1/p}  \\
  \le & \Bigl( \sum_{i=1}^\infty (c(2i-1)+c(2i)) (s_i(a)^p+s_i(b)^p) \Bigr)^{1/p} \\
  = & \Biggl( \sum_{i=1}^\infty \Bigl( (c(2i-1)+c(2i))^{1/p} s_i(a) \Bigr)^p + \sum_{i=1}^\infty \Bigl( (c(2i-1)+c(2i))^{1/p} s_i(b) \Bigr)^p \Biggr)^{1/p} \\
  \le & 2^{\frac{1}{p}-1} \Biggl(  \Bigl(\sum_{i=1}^\infty ( (c(2i-1)+c(2i))^{1/p} s_i(a) \bigr)^p\Bigr)^{1/p} \\
       & \qquad \qquad \qquad + (\sum_{i=1}^\infty ( (c(2i-1)+c(2i))^{1/p} s_i(b) )^p )^{1/p}  \Biggr) \\
\intertext{(where we use the property of quasi-norm  $\|\cdot\|_p$ for $0<p<1.$) }
  = & 2^{\frac{1}{p}-1} \Biggl(  \Bigl( \sum_{i=1}^\infty  (c(2i-1)+c(2i)) s_i(a)^p \Bigr)^{1/p} +   \Bigl( \sum_{i=1}^\infty  (c(2i-1)+c(2i)) s_i(b)^p \Bigr)^{1/p}  \Biggr).
\end{align*}
Using the same argument in $(2)\Rightarrow(3)$,
\begin{gather*}
    \Bigl(\sum_{i=1}^\infty  (c(2i-1)+c(2i)) s_i(a)^p \Bigr)^{1/p}  \le L^{1/p} \varphi_\alpha(|a|^p)^{1/p}  \\
    \Bigl(\sum_{i=1}^\infty  (c(2i-1)+c(2i)) s_i(b)^p \Bigr)^{1/p}  \le L^{1/p} \varphi_\alpha(|b|^p)^{1/p} .
\end{gather*} 
So we have $\varphi_\alpha(|a+b|^p)^{1/p} \le 2^{\frac{1}{p}-1} L^{1/p} \Bigl( \varphi_\alpha(|a|^p)^{1/p} + \varphi_\alpha(|b|^p)^{1/p} \Bigr)$.

$(3)\Rightarrow(5)$ and  $(4)\Rightarrow(5)$ are clear. 
\end{proof}

\begin{remark} \rm
Assume that the sequence $(\frac{\alpha(2n)}{\alpha(n)})_n$ is bounded. For any $0 < p < \infty$, we can extend 
$|||a|||_{\alpha,p}$ from $a \in F(H)$ to $a \in K(H)$ as follows: Since
$\lambda_i : K(H)^+ \ni a \mapsto \lambda_i(a) \in [0,\infty)$ 
is operator norm continuous, for a fixed $N$, 
$K(H)^+ \ni a \mapsto (\sum_{i=1} ^{N}  \lambda_i(a^p) c_i )^{1/p}$ is also norm continuous. Hence 
we can extend $|||a|||_{\alpha,p}$ from $a \in F(H)^+$ to $a \in K(H)^+$ admitting $+\infty$ by 
$$
K(H)^+ \ni a \mapsto |||a|||_{\alpha,p} :=(\sum_{i=1} ^{\infty}  
 \lambda_i(a^p) c_i )^{1/p} 
= \sup_N (\sum_{i=1} ^{N}  \lambda_i(a^p) c_i )^{1/p}
$$
as a lower semi-continuous function with respect to the operator norm. 
Since a continuous function $[0,\infty) \ni x \mapsto x^p$ is monotone increasing, 
$$
\lambda_i(a^p) = \lambda_i(a)^p.
$$
For any projections $e$ and $f$ with $0 \leq e \leq f$, 
$$
\lambda_i(eae) = \lambda_i(efafe) \leq \|e\|  \lambda_i(faf) \|e\| \leq  \lambda_i(faf).
$$
Hence $|||eae|||_{\alpha,p}  \leq  |||faf||_{\alpha,p}$. 
Take $a \in K(H)^+$. Let $(e_n)_n$ be an increasing sequence of projections with $\dim e_n = n$.  
Since $a$ is a compact operator,  $e_nae_n$ converges to $a$ in the operator norm. 
Since $ K(H)^+ \ni a \mapsto |||a|||_{\alpha,p}$ is  lower semi-continuous function 
with respect to the operator norm,  $|||e_nae_n|||_{\alpha,p}$ converges to $|||a|||_{\alpha,p}$.  
By the above Theorem, 
there exists a constant $M > 0$ such that for any  $a,b \in K(H)^+$ and for any $n$ 
$$
|||e_n(a + b)e_n|||_{\alpha,p} 
\leq M( |||e_nae_n|||_{\alpha,p} +   |||e_nbe_n|||_{\alpha,p}) .
$$
Taking $n \to \infty$, we obtain that 
$$
|||a + b|||_{\alpha,p} 
\leq M( |||a|||_{\alpha,p} +   |||b|||_{\alpha,p}). 
$$
Next, we consider the general case such that $a \in  K(H)$. 
Define 
$$
|||a|||_{\alpha,p} := |||(|a|)|||_{\alpha,p} = (\sum_{i=1} ^{\infty}  
 \lambda_i(|a|^p) c_i )^{1/p}. 
$$ 
For not necessarily positive $a,b \in K(H)$, use a known technique. 
By a theorem of Akemann-Anderson-Pedersen \cite{A-A-P}, which  generalizes Thompson's Theorem \cite{To}, 
there exist isometries $u,v \in B(H)$ such that 
$$
| a + b | \leq u|a|u^* + v|b|v^*.
$$
Since  $\lambda_i((u|a|u^*)^p) = \lambda_i(|a|^p)$,  we have that 
$$
|||a + b|||_{\alpha,p} 
\leq M( |||a|||_{\alpha,p} +   |||b|||_{\alpha,p}) .
$$
\end{remark}

\begin{example} \rm
 Put $\alpha(0) = 0, \alpha(1) = \alpha(2) = 1$, and $\alpha(n) = 3$ for $n \ge 3$. 
Then for a positive $a \in K(H)$, 
$$
\varphi_{\alpha}(a) = \lambda_1(a) + 2\lambda_3(a). 
$$
Since $\alpha$ is not concave and  $\frac{\alpha(2n)}{\alpha(n)} \le 3$, 
$\varphi_{\alpha}(|a|)$ does not give a norm but give a quasi-norm. 
\end{example}

\begin{definition} \rm 
Consider any $p$ with $0 < p < +\infty$. An operator $a \in K(H)$ is said to be a {\it weighted Schatten-von Neumann $p$-class operator for a non-linear trace}  $\varphi_{\alpha}$  {\it of the Choquet type} if 
$|||a|||_{\alpha,p}= \varphi_{\alpha}(|a|^p)^{1/p}< \infty$.  The weighted Schatten-von Neumann $p$-class  ${\mathcal C}^{\alpha}_p(H)$ is defined as 
the set of all weighted Schatten-von Neumann $p$-class  operators for a non-linear trace $\varphi_{\alpha}$. We also call ${\mathcal C}^{\alpha}_p(H)$
the weighted $L^p$-space for $\varphi_{\alpha}$.   
Assume that  the sequence $(\frac{\alpha(2n)}{\alpha(n)})_n$ is bounded.
Then the weighted $L^p$-space ${\mathcal C}^{\alpha}_p(H)$ 
for a non-linear trace $\varphi_{\alpha}$ 
is a quasi-normed space with respect to the quasi-norm $||| \ |||_{\alpha,p}$.  
Therefore by Aoki-Rolewiz Theorem, ${\mathcal C}^{\alpha}_p(H)$ can be a metric space.  Moreover 
${\mathcal C}^{\alpha}_p(H)$ is a $*$-ideal of $B(H)$.   
\end{definition}

In the rest of this section, we assume that $\M$ is a factor of type ${\rm II}_{\infty}$ with a faithful normal semifinite trace $\tau$ 
or a factor of type ${\rm II}_1$ with a faithful normal finite trace $\tau$ with $\tau(I) = 1.$ 

\begin{definition} \rm Let $\M$ be a factor of  type ${\rm II}_{\infty}$.
Let $\alpha: [0,\infty)  \rightarrow [0, \infty)$ be a 
monotone increasing function with $\alpha(0) = 0$.  Assume that $\alpha$ is left continuous. 
We think a weighted dimension function $p \mapsto \alpha(\tau(p))$ for projections $p \in \M$ is an analog of 
a monotone measure.  
Define a non-linear trace $\varphi_{\alpha} : \M^+ \rightarrow  [0, \infty]$ of the Choquet type associated with $\alpha$ 
as follows: For $a \in \M^+$ with the spectral decomposition $a = \int_0^{\infty} \lambda d e_{\lambda}(a)$,  let  
$$
\varphi_{\alpha}(a) : = \int_0^{\infty}  \alpha(\tau(e_{(s,\infty)}(a)))ds
= \int_0^{\infty}  \alpha(\tau({\rm supp}((a-sI)_+)))ds, 
$$
where $(a-sI)_+: = ( (a-sI) + |a-sI|)/2$ is the positive part of  $a-sI$ and ${\rm supp}((a-sI)_+)$ is the 
support projection of $(a-sI)_+$, which is also the range projection of it. 
Here we use the following convention $\alpha(\infty) = \lim_{x \to \infty} \alpha(x)$ to allow $\infty$. 
Then for any unitary $u \in \M$, 
$$
\varphi_{\alpha}(uau^*) =  \varphi_{\alpha}(a),  
$$
because $\tau(e_{(s,\infty)}(uau^*))) = \tau(ue_{(s,\infty)}(a))u^*) =  \tau(e_{(s,\infty)}(a))$.  Hence $\varphi_{\alpha}$ is unitarily invariant.

If $\M$ is a factor of  type ${\rm II}_1$  with a faithful normal finite trace $\tau$ with $\tau(I) = 1$, 
then we consider a monotone increasing function $\alpha: [0,1] \rightarrow [0, \infty)$ 
with $\alpha(0) = 0$.  Assume that $\alpha$ is left continuous. Then we can 
define a non-linear trace $\varphi_{\alpha} : \M^+ \rightarrow  [0, \infty]$ of the Choquet type associated with $\alpha$ 
similarly.

For example,  if $a$ has a finite spectrum and 
$a = \sum_{i=1}^{n} a_ip_i$ is the spectral decomposition with $a_1 \geq a_2 \geq \dots \geq a_n > 0$, then 
$$
\varphi_{\alpha}(a) =  \sum_{i=1} ^{n-1} (a_i- a_{i+1})\alpha(\tau(p_1 + \dots + p_i)) +
 a_n \alpha(\tau(p_1 + \dots + p_n)). 
$$
\end{definition}

\begin{remark}\rm
For  $a \in \M^+$, 
$$
 \varphi_{\alpha}(a) := \int_0^{\infty}  \alpha(\tau(e_{(s,\infty)}(a)))ds  
 =  \int_0^{\infty}  \alpha(\tau(e_{[s,\infty)}(a)))ds .
$$
Since one point set $\{0\}$ is a null set, 
it is enough to show that 
$$
\int_{(0,\infty)}  \alpha(\tau(e_{(s,\infty)}(a)))ds  
 =  \int_{(0,\infty)}  \alpha(\tau(e_{[s,\infty)}(a)))ds. 
$$
Put 
$$
A :=\int_{(0,\infty)}  \alpha(\tau(e_{(s,\infty)}(a)))ds , \ 
A_n :=\int_{(\frac{1}{n},\infty)}  \alpha(\tau(e_{(s,\infty)}(a)))ds , 
$$
and 
$$
B :=  \int_{(0,\infty)}  \alpha(\tau(e_{[s,\infty)}(a)))ds, \ 
B_n := \int_{(\frac{1}{n},\infty)} \alpha(\tau(e_{[s,\infty)}(a)))ds. 
$$
Since $A_n \leq B_n$ and $A_{n+1} \geq B_n$, 
taking their limits as $n \to \infty$, we have that $A = B$  by monotone convergence theorem. 
\end{remark}

\begin{lemma}
\label{prop:observation}
Let $\M$ be  a factor of  type ${\rm II}_{\infty}$.
Let $\alpha: [0,\infty)  \rightarrow [0, \infty)$ be a 
monotone increasing left  continuous function with $\alpha(0) = 0$.  
Let  $\varphi_{\alpha} : \M^+ \rightarrow  [0, \infty]$ be a non-linear trace of the Choquet type associated with $\alpha$. 
Then $\varphi_{\alpha}$ is a monotone map and sequentially normal in the sense that for any increasing  sequence 
$(a_n)_n$ in $\M^+$ if $ a_n \nearrow a \in \M^+$ in strong operator topology  then $\varphi_{\alpha}(a_n)  \nearrow \varphi_{\alpha}(a) $.

If $\M$ is  a factor of  type ${\rm II}_1$  with a faithful normal finite trace $\tau$ with $\tau(I) = 1$,
let $\alpha: [0,1]  \rightarrow [0, \infty)$ be a 
monotone increasing left  continuous function with $\alpha(0) = 0$.
Then similar statements hold.   
\end{lemma}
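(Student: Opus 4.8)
The plan is to work throughout with the distribution function $d_a(s) := \tau(e_{(s,\infty)}(a))$, so that $\varphi_\alpha(a) = \int_0^\infty \alpha(d_a(s))\,ds$, and to reduce both assertions to how $d_a$ depends on $a$. For monotonicity it suffices to show that $0 \le a \le b$ forces $d_a(s) \le d_b(s)$ for every $s$, since then $\alpha(d_a(s)) \le \alpha(d_b(s))$ by monotonicity of $\alpha$ and integration gives $\varphi_\alpha(a) \le \varphi_\alpha(b)$. I would deduce this from property $(2)$ of the generalized singular values, $\mu_t(a) \le \mu_t(b)$, together with the elementary duality $\mu_t(a) > s \iff d_a(s) > t$ coming from $\mu_t(a) = \inf\{s : d_a(s) \le t\}$ and the right-continuity of $d_a$: if one had $d_a(s) > d_b(s)$, choosing $t$ strictly between them would force $\mu_t(a) > s \ge \mu_t(b)$, a contradiction.

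For sequential normality, first note that $a_n \le a$ gives $d_{a_n}(s) \le d_a(s)$ and that $(d_{a_n}(s))_n$ is increasing in $n$; set $D(s) := \lim_n d_{a_n}(s) \le d_a(s)$. Since $\alpha$ is increasing and \emph{left continuous}, $\alpha(d_{a_n}(s)) \nearrow \alpha(D(s))$ for each $s$, so the monotone convergence theorem yields $\lim_n \varphi_\alpha(a_n) = \int_0^\infty \alpha(D(s))\,ds$. Everything thus reduces to proving $D(s) = d_a(s)$, and by right-continuity of $d_a$ it is enough to establish the key inequality $D(s) \ge d_a(s')$ for all $s' > s$.

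To prove the key inequality, fix $s < s'$ and choose a continuous $g : [0,\infty) \to [0,1]$ with $\chi_{[s',\infty)} \le g \le \chi_{(s,\infty)}$. Then $\tau(g(a_n)) \le \tau(e_{(s,\infty)}(a_n)) = d_{a_n}(s)$ and $\tau(g(a)) \ge \tau(e_{(s',\infty)}(a)) = d_a(s')$. Because $a_n \nearrow a$ strongly with $\sup_n \|a_n\| \le \|a\|$, the continuous functional calculus is strongly continuous on this norm-bounded set, so $g(a_n) \to g(a)$ strongly and hence $\sigma$-weakly (the strong and $\sigma$-weak topologies agree on bounded sets). Finally, normality of $\tau$ is exactly $\sigma$-weak lower semicontinuity, so $d_a(s') \le \tau(g(a)) \le \liminf_n \tau(g(a_n)) \le D(s)$. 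Letting $s' \downarrow s$ and using right-continuity of $d_a$ gives $D(s) \ge d_a(s)$, whence $D(s) = d_a(s)$ and $\lim_n \varphi_\alpha(a_n) = \varphi_\alpha(a)$.

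The hard part will be precisely this last step: spectral projections are not monotone in the operator and $\tau$ is not strongly continuous, so one cannot simply pass $e_{(s,\infty)}(a_n) \to e_{(s,\infty)}(a)$. The sandwiching by a continuous $g$ converts the problem into strong convergence of the functional calculus, where lower semicontinuity (normality) of $\tau$ supplies the single inequality needed; the left-continuity of $\alpha$ is equally essential, as it is what promotes $d_{a_n}(s) \nearrow D(s)$ to $\alpha(d_{a_n}(s)) \nearrow \alpha(D(s))$ before applying monotone convergence. The type ${\rm II}_1$ case is handled identically, with $\alpha$ defined on $[0,1]$ and all distribution functions bounded by $\tau(I) = 1$.
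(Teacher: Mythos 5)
Your proof is correct and follows essentially the same route as the paper's: both reduce everything to the distribution function $s \mapsto \tau(e_{(s,\infty)}(a))$, sandwich the spectral indicator by continuous increasing functions so that normality of $\tau$ can be applied through the strongly continuous functional calculus on bounded sets, and then combine left continuity of $\alpha$ with the monotone convergence theorem. The only cosmetic difference is in the monotonicity step, where you invoke property $(2)$ of the generalized singular values together with the duality $\mu_t(a) > s \Leftrightarrow \tau(e_{(s,\infty)}(a)) > t$, while the paper instead quotes the equivalent inequality $\tau(f(a)) \le \tau(f(b))$ for continuous increasing $f$ from Fack--Kosaki.
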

\begin{proof} Let $ 0 \leq a \leq b$ in $\M$.  
For $s \geq 0$ and a natural number $n$, define functions $h_s$ and  $h_s^n$ on $[0,\infty)$ by 
$$
h_s(x) = \chi_{(s,\infty)}(x) = 
\begin{cases}
0, & (0 \leq x \leq s), \\
1, & (s <  x), 
\end{cases}
$$
$$
h_s^n(x) = 
\begin{cases}
0, & (0 \leq x \leq s), \\
nx-ns, & (s \leq x \leq s + 1/n), \\
1, & (s + 1/n \leq  x),
\end{cases}
$$
Then increasing sequence $(h_s^n)_n$ converges to $h_s$ pointwisely.  Hence Borel functional calculus 
$(h_s^n(a))_n$ converges to $h_s(a)$ in the strong operator topology.  Since $\tau$ is normal, 
$(\tau(h_s^n(a))_n$ converges to $\tau(h_s(a))$.  Although 
$ 0 \leq a \leq b$  does not imply that $ h_s(a)\leq  h_s(b)$, it holds that 
$\tau(f(a))\leq \tau(f(b))$ for any continuous increasing function $f$ on $[0,\infty)$ with $f(0) = 0$ 
as in \cite[Corollary 2.9]{F-K}.  Therefore  $\tau(h_s^n(a)) \leq  \tau(h_s^n(b))$. This implies that 
$\tau(h_s(a)) \leq \tau(h_s(b))$.  Since 
$$
h_s(a) = {\rm supp}((a-sI)_+) = e_{(s,\infty)}(a), 
$$
we should note that 
\begin{align*}
\varphi_{\alpha}(a) 
& = \int_0^{\infty}  \alpha(\tau({\rm supp}((a-sI)_+)))ds  \\
&= \int_0^{\infty}  \alpha(\tau(h_s(a))))
= \int_0^{\infty}  \alpha(\tau(e_{(s,\infty)}(a)))ds . 
\end{align*}
Because  $\alpha$ is increasing, 
\begin{align*}
\varphi_{\alpha}(a) 
 & = \int_0^{\infty}  \alpha(\tau((h_s(a)))ds \\
  & \leq \int_0^{\infty}  \alpha(\tau((h_s(b)))ds = \varphi_{\alpha}(b). 
\end{align*}
Thus  $\varphi_{\alpha}$ is a monotone map. 

Assume that an increasing  sequence 
$(a_n)_n$ in $M^+$ converges to $a \in \M^+$  in strong operator topology.  
Then by the monotone convergence theorem, 
\begin{align*}
   &  \lim_{n \to \infty} \varphi_{\alpha}(a_n) 
   = \lim_{n \to \infty} \int_0^{\infty}  \alpha(\tau(h_s(a_n)))ds 
   = \int_0^{\infty} \lim_{n \to \infty} \alpha(\tau(h_s(a_n)))ds \\
 = &  \int_0^{\infty} \sup_n \sup_m \alpha(\tau(h_s^m(a_n)))ds 
 =  \int_0^{\infty} \sup_m \sup_n \alpha(\tau(h_s^m(a_n)))ds \\
 = & \int_0^{\infty} \sup_m \alpha(\tau(h_s^m(a)))ds 
 = \int_0^{\infty}  \alpha(\tau(h_s(a)))ds  = \varphi_{\alpha}(a), 
\end{align*}
since $\alpha$ is left continuous. 
Thus $\varphi_{\alpha}$ is sequentially normal.
\end{proof}

\begin{definition} \rm
We say that $a \in \M^+$ is of  {\it $\tau$-finite rank}  in $M$ if $a$ has a finite spectrum and $\tau(a) < \infty.$  It is easy to see 
that $a \in \M^+$ is of $\tau$-finite rank in $\M$  if and only if  $a$ has the spectral decomposition  
 $a = \sum_{k=1}^{n} a_k p_k$ such that each $\tau(p_k) < \infty$ for 
$k = 1,2,\dots,n$.  We denote by $M^+_{\tau-fr}$ the set of all $a \in \M^+$ which is of $\tau$-finite rank in $\M$. 
\end{definition}

We can express non-linear traces of the Choquet type using the ``generalized $t$-th eigenvalues''. 
\begin{proposition}
\label{prop:Stieltjes}
Let $\M$ be a factor of type ${\rm II}_{\infty}$. 
Let $\alpha: [0,\infty)  \rightarrow [0, \infty)$ be a 
monotone increasing left continuous function with $\alpha(0) = 0$.  
Let  $\varphi_{\alpha} : \M^+ \rightarrow  [0, \infty]$ be a non-linear trace of the Choquet type associated with $\alpha$ . 
Then the non-linear trace $\varphi_{\alpha}$ is also described as follows: Let $\nu_{\alpha}$ be the Lebesgue-Stieltjes measure 
associated with $\alpha$ such that $\nu_{\alpha}([a,b)) = \alpha(b) - \alpha(a)$ .
For any $a \in \M^+$ we have that 
$$
\varphi_{\alpha}(a) 
 := \int_0^{\infty}  \alpha(\tau(e_{(s,\infty)}(a)))ds = \int_0^{\infty} \lambda_t(a) d\nu_{\alpha}(t). 
$$
where we admit $+\infty$.

If $\M$ is  a factor of  type ${\rm II}_1$  with a faithful normal finite trace $\tau$ with $\tau(I) = 1$,
let $\alpha: [0,1]  \rightarrow [0, \infty)$ be a 
monotone increasing left  continuous function with $\alpha(0) = 0$.
Then a similar fact holds:
$$
\varphi_{\alpha}(a) 
 := \int_0^{\infty}  \alpha(\tau(e_{(s,\infty)}(a)))ds = \int_0^1 \lambda_t(a) d\nu_{\alpha}(t). 
$$
\end{proposition}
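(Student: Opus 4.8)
The plan is to express both sides of the claimed identity as one and the same double integral over the ordinate set of the distribution function of $a$, and then interchange the order of integration by Tonelli's theorem. Write $d_a(s) := \tau(e_{(s,\infty)}(a))$ for $s \geq 0$. The monotonicity argument in Lemma \ref{prop:observation} (via the functional calculus identity $e_{(s,\infty)}(a) = \mathrm{supp}((a-sI)_+)$ together with normality of $\tau$) shows that $s \mapsto d_a(s)$ is decreasing and right continuous on $[0,\infty)$, with values in $[0,\infty]$, and that $\varphi_{\alpha}(a) = \int_0^\infty \alpha(d_a(s))\,ds$. The whole computation will rest on relating $d_a$ to the generalized $t$-th eigenvalue $\lambda_t(a) = \inf\{s \geq 0 : d_a(s) \leq t\}$.

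The first real step is to record the \emph{exact} duality between $d_a$ and $\lambda_t(a)$. Since $d_a$ is decreasing and right continuous, the set $\{s : d_a(s) \leq t\}$ is a left-closed half-line $[\lambda_t(a),\infty)$: indeed every $s > \lambda_t(a)$ lies in it, and right continuity forces $d_a(\lambda_t(a)) = \lim_{s \downarrow \lambda_t(a)} d_a(s) \leq t$, so the infimum is attained. Consequently, for all $s,t \geq 0$,
\[
d_a(s) > t \iff s < \lambda_t(a),
\]
so that the two ordinate sets $\{(s,t) : t < d_a(s)\}$ and $\{(s,t) : s < \lambda_t(a)\}$ coincide as subsets of $[0,\infty)^2$ (including the cases $\lambda_t(a) = +\infty$, i.e. $d_a \equiv \infty$ above $t$). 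Both are measurable, since $d_a$ and $t \mapsto \lambda_t(a)$ are monotone, hence Borel.

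The second step is to encode $\alpha$ through the Lebesgue--Stieltjes measure $\nu_{\alpha}$. Here the hypothesis that $\alpha$ is left continuous with $\alpha(0) = 0$ is exactly what makes the prescription $\nu_{\alpha}([a,b)) = \alpha(b) - \alpha(a)$ consistent on half-open intervals and so determine a genuine $\sigma$-finite Borel measure on $[0,\infty)$ (it is finite on every bounded interval because $\alpha$ is finite-valued). This yields, with the convention $\alpha(\infty) = \lim_{x \to \infty}\alpha(x) = \nu_{\alpha}([0,\infty))$,
\[
\alpha(x) = \nu_{\alpha}([0,x)) = \int_0^\infty \chi_{\{t < x\}}\, d\nu_{\alpha}(t), \quad (x \in [0,\infty]).
\]
Substituting $x = d_a(s)$ and applying Tonelli to the nonnegative jointly measurable integrand, then using the duality of the previous step to rewrite $\{t < d_a(s)\}$ as $\{s < \lambda_t(a)\}$ and evaluating the resulting inner $ds$-integral as the Lebesgue measure $\lambda_t(a)$ of $[0,\lambda_t(a))$, gives
\[
\varphi_{\alpha}(a) = \int_0^\infty \!\! \int_0^\infty \chi_{\{t < d_a(s)\}}\, d\nu_{\alpha}(t)\, ds = \int_0^\infty \!\! \int_0^\infty \chi_{\{s < \lambda_t(a)\}}\, ds\, d\nu_{\alpha}(t) = \int_0^\infty \lambda_t(a)\, d\nu_{\alpha}(t),
\]
the desired formula, with $+\infty$ admitted throughout since all quantities are nonnegative.

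The type ${\rm II}_1$ case is handled identically: because $\tau(I) = 1$, the distribution function $d_a$ takes values in $[0,1]$, so $\nu_{\alpha}$ is supported on $[0,1]$ and the outer integral runs over $t \in [0,1]$, yielding $\varphi_{\alpha}(a) = \int_0^1 \lambda_t(a)\, d\nu_{\alpha}(t)$. I expect the only delicate point to be the exact duality in the second step, since establishing that $\{s : d_a(s) \leq t\}$ is left closed (so that the characteristic functions agree pointwise rather than merely almost everywhere) is precisely where right continuity of $d_a$ and the infimum convention defining $\lambda_t(a)$ must be reconciled; once that identity and the left-continuity-based construction of $\nu_{\alpha}$ are in place, the interchange of integrals and the evaluation of the inner integral are routine.
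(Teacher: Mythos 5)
Your proof is correct, and it takes a genuinely different route from the paper's. You obtain the identity in one stroke via a layer-cake/Tonelli argument: writing $d_a(s)=\tau(e_{(s,\infty)}(a))$, you use normality of $\tau$ to see that $d_a$ is decreasing and right continuous, deduce the exact generalized-inverse duality $d_a(s)>t \iff s<\lambda_t(a)$ (the attainment of the infimum being the point where right continuity enters), expand $\alpha(d_a(s))=\nu_\alpha([0,d_a(s)))$ using left continuity of $\alpha$, and interchange the two integrals of a nonnegative jointly measurable indicator over $\sigma$-finite measures. The paper instead verifies the identity by direct computation for operators of $\tau$-finite rank, treats the finite-spectrum case with an infinite-trace spectral projection separately (both sides being $+\infty$ simultaneously), and then passes to general $a\in\M^+$ by approximating with an increasing norm-convergent sequence of finite-spectrum operators, invoking sequential normality of $\varphi_{\alpha}$ from Lemma~\ref{prop:observation} on one side and the Lipschitz estimate $|\mu_t(x)-\mu_t(y)|\le\|x-y\|$ plus monotone convergence on the other. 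Your argument is shorter, avoids the approximation step entirely, and makes transparent exactly where each continuity hypothesis on $\alpha$ and on $d_a$ is used; the paper's computation has the merit of exhibiting the explicit finite-sum formula for $\tau$-finite-rank operators, which is reused elsewhere. The only cosmetic point is your attribution of the right continuity of $s\mapsto d_a(s)$ to Lemma~\ref{prop:observation}, which does not state it explicitly; it follows from $e_{(s',\infty)}(a)\nearrow e_{(s,\infty)}(a)$ as $s'\downarrow s$ together with normality of $\tau$, and is standard (Fack--Kosaki), so this is a citation nicety rather than a gap.
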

\begin{proof}
Let $a \in \M^+$. If $a = 0$, then it is clear.  So we assume that $a \not= 0$. 
First, consider the case that  $a \in \M^+$ is of $\tau$-finite rank in $\M$.  Let 
$a = \sum_{k=1}^{n} a_k p_k$ be  the spectral decomposition of $a$ with $a_1 > a_2> \dots > a_n >0$ and each $\tau(p_k) < \infty$ for 
$k = 1,2,\dots,n$. Then 
\begin{align*}
     & \alpha(\tau(e_{(s,\infty)}(a))) \\
  = &
\begin{cases}
    0,& (a_1 \leq s ), \\
    \alpha(\tau(p_1 + p_2 + \dots + p_k)), & (a_{k+1} \leq s < a_k, 1\le k\le n-1), \\
    \alpha(\tau(p_1 + p_2 + \dots + p_n)), & (0 \leq s < a_n).
\end{cases}
\end{align*}
Therefore 
\begin{align*}
    & \varphi_{\alpha}(a) 
  = \int_0^{\infty}  \alpha(\tau(e_{(s,\infty)}(a)))ds \\
  = &   \sum_{k=1}^{n-1} (a_k- a_{k+1})\alpha(\tau(p_1 + \cdots + p_k))
    + a_n \alpha(\tau(p_1 + \cdots + p_n)).
\end{align*}
On the other hand, 
\begin{align*}
    & \lambda_t(a) = \inf \{s \geq 0 | \tau(e_{(s, \infty)}(a)) \leq t \} \\
  = &  \begin{cases}
           a_1,& (0 \leq t < \tau(p_1), \\
           a_k & (\tau(p_1 + \cdots + p_{k-1}) \leq  t < \tau(p_1 + \dots + p_k),  2\le k \le n), \\
           0    & (\tau(p_1 + \cdots +p_n) \leq t).
        \end{cases}
\end{align*}
Therefore 
\begin{align*}
     & \int_0^{\infty} \lambda_t(a) d\nu_{\alpha}(t) \\
  =  & a_1(\alpha(\tau(p_1)) -\alpha(0)) + 
        \sum_{k=2}^{n} a_k(\alpha(\tau(p_1 + \cdots + p_k)) - \alpha(\tau(p_1 + \cdots + p_{k-1}))) \\
  =  & \sum_{k=1}^{n-1} (a_k- a_{k+1})(\alpha(\tau(p_1 +  \cdots + p_k)))
            + a_n (\alpha(\tau(p_1 +  \cdots  + p_n)))
        = \varphi_{\alpha}(a) .
\end{align*}

Secondly, consider the case that  $a \in \M^+$ has a finite spectrum.  Let 
$a = \sum_{k=1}^{n} a_k p_k$ be  the spectral decomposition of $a$ with $a_1 > a_2> \dots > a_n >0$ for 
$k = 1,2,\dots,n$ . Then 
\[  \int_0^{\infty}  \alpha(\tau(e_{(s,\infty)}(b_k)))ds = \infty  \]
if and only if 
\[   \alpha(\tau(p_i)) = \infty  \text{ for some }i   = 1,2,\dots,n  \]
if and only if 
\[  \tau(p_i) = \infty \text{ and }   \lim_{t \to \infty} \alpha(t) = \infty  \text{ for some }i   = 1,2,\dots,n  \]
if and only if 
\[  \int_0^{\infty}  \lambda_t(b_k) d\nu_{\alpha}(t) = \infty.   \]
Next, consider the general case that $a \in \M^+$. Then there exists an increasing sequence 
$(b_n)_n$ in $\M^+$ of a finite spectrum such that $\| b_n -a\| \rightarrow 0$. By \cite[page 277]{F-K}, 
for any $x,y \in \M$
$$
|\mu_t(x) - \mu_t(y)| \leq \| x - y \|.
$$
Therefore $\lambda_t(b_n)$ converges to  $\lambda_t(a)$ pointwise in $t$. By the monotone convergence theorem, 
we have that 
$$
\int_0^{\infty} \lambda_t(b_n) d\nu_{\alpha}(t)  \nearrow   \int_0^{\infty} \lambda_t(a) d\nu_{\alpha}(t) .
$$
Since  $\varphi_{\alpha}$ is sequentially normal. Thus $\varphi_{\alpha}(b_n)  \nearrow \varphi_{\alpha}(a)$. 
If all $b_n$ are of $\tau$-finite rank, then the equation holds. If some $b_k$ is not of $\tau$-finite rank, 
Then 
$$
\int_0^{\infty}  \alpha(\tau(e_{(s,\infty)}(b_k)))ds = \infty = \int_0^{\infty} \lambda_t(b_k) d\nu_{\alpha}(t). 
$$
This implies the conclusion in the general case. The rest is clear. 
\end{proof}


\begin{example} \rm
Let $\alpha(x) = x$, then we can recover the trace $\tau$,  and for any $a \in \M^+$, 
$$
\varphi_{\alpha}(a) 
 = \int_0^{\infty}  \tau(e_{(s,\infty)}(a))ds = \int_0^{\infty} \lambda_t(a) dt =\tau(a). 
$$
\end{example}

\begin{example} \rm
Let  $$
\alpha(x) 
= \begin{cases}
1    & (0 < x), \\
0    & (x = 0), 
\end{cases}
$$
then we can recover the operator norm and for any $a \in \M^+$,  and we have that 
$$
\varphi_{\alpha}(a) 
 = \int_0^{\infty}  \alpha(\tau(e_{(s,\infty)}(a)))ds = \int_0^{\infty} \lambda_t(a) d\nu_{\alpha}(t) = ||a||.
$$
\end{example}

\begin{example} \rm
Fix $t > 0$ and 
let  $$
\alpha(x) 
= \begin{cases}
x    & (0 \leq x \leq t ), \\
t    & (t < x ), 
\end{cases}
$$
then we obtain a continuous analog of  the Ky Fan norm and for any $a \in \M^+$,  we have that 
$$
\varphi_{\alpha}(a) 
 = \int_0^{\infty}  \alpha(\tau(e_{(s,\infty)}(a)))ds = \int_0^{t} \lambda_t(a) dt.   
$$
\end{example}

\begin{example} \rm
Let  $$
\alpha(x) 
= \begin{cases}
0    & (x = 0), \\
1    & (0 < x \le 2), \\
4    & (2 < x ), 

\end{cases}
$$
then for any $a \in \M^+$,  and we have that 
$$
\varphi_{\alpha}(a) 
=\lambda_0(a) + 3\lambda_2(a) =  \|a\| +  3\lambda_2(a).
$$
Then $\varphi_{\alpha}(|a|)$ does not give a norm but gives a quasi-norm. In fact, 
take orthogonal projections $p$ and $q$ such that $\tau(p) = \tau(q) =4/3 $. Then 
$\varphi_{\alpha}(p) = \varphi_{\alpha}(q) = 1 + 3 \times 0 = 1$ and 
$\varphi_{\alpha}(p + q) = 1 + 3 \times 1 = 4$. Hence 
$\varphi_{\alpha}$ does not give a norm .  
Since for positive $a,b \in \M$, 
$\lambda_0(a + b) \leq \lambda_0(a) + \lambda_0(b)$, 
$\lambda_2(a + b) \leq \lambda_1(a) + \lambda_1(b)$, 
$\lambda_2(a + b) \leq \lambda_0(a) + \lambda_2(b)$,  and 
$\lambda_2(a + b) \leq \lambda_2(a) + \lambda_0(b)$ 
 by Weyl's inequality (the property (3) of $\mu_t$), 
we have that 
\begin{align*}
 \varphi_{\alpha}(a + b) 
& =\lambda_0(a +b) + 3\lambda_2(a + b) \\
& \leq  \lambda_0(a) + \lambda_0(b) + \frac{3}{2}( \lambda_0(a) + \lambda_2(b) + \lambda_2(a) + \lambda_0(b))\\
&\leq  \frac{5}{2} (\varphi_{\alpha}(a) + \varphi_{\alpha}(b) ).
 \end{align*} 
Hence $\varphi_{\alpha}(|a|)$ gives a quasi-norm.

\end{example}

We characterize non-linear traces of the Choquet type on semifinite factors. 

\begin{theorem} 
Let $\M$ be a factor of type ${\rm II}_{\infty}$ with a faithful normal semifinite trace $\tau$. 
Let $\varphi : \M^+ \rightarrow  [0, \infty]$ be a non-linear 
positive map.  Then the following conditions are equivalent:
\begin{enumerate}
\item[$(1)$]  $\varphi$ is a non-linear trace $\varphi = \varphi_{\alpha}$  of the Choquet type associated with  
a monotone increasing and left continuous function $\alpha: [0, \infty) \rightarrow [0, \infty)$  with $\alpha(0) = 0$.
\item[$(2)$] $\varphi $ is monotonic increasing additive on the spectrum, unitarily invariant, monotone, 
positively homogeneous,  and sequentially normal on $\M^+$.  Moreover,  $\varphi(a) < \infty  $ 
for any $a \in \M^+_{\tau-fr}$ . 
\end{enumerate}
Let $\M$ be a factor of type ${\rm II}_1$ with a faithful normal finite trace with $\tau(1) = 1$. 
Let $\varphi : \M^+ \rightarrow  [0, \infty]$ be a non-linear 
positive map.  Then similarly, the following conditions are equivalent:
\begin{enumerate}
\item[$(1)$]  $\varphi$ is a non-linear trace $\varphi = \varphi_{\alpha}$  of the Choquet type associated with  
a monotone increasing and left continuous function $\alpha: [0, 1] \rightarrow [0, \infty)$  with $\alpha(0) = 0$.
\item[$(2)$] $\varphi $ is monotonic increasing additive on the spectrum, unitarily invariant, monotone, and 
positively homogeneous.  Moreover,  $\varphi(a) < \infty  $ 
for any $a \in \M^+_{\tau-fr}$. 
(We do not need that  $\varphi $ is sequentially normal on $\M^+$.) 
\end{enumerate}
\end{theorem}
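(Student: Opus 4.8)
The plan is to route everything through the values of $\varphi$ on projections, treating $(1)\Rightarrow(2)$ as a verification and $(2)\Rightarrow(1)$ as a reconstruction of $\alpha$.

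For $(1)\Rightarrow(2)$, suppose $\varphi=\varphi_\alpha$ with $\alpha$ monotone increasing and left continuous. Then monotonicity and sequential normality are exactly Lemma~\ref{prop:observation}, unitary invariance is built into the definition of $\varphi_\alpha$ (via $\tau(e_{(s,\infty)}(uau^*))=\tau(e_{(s,\infty)}(a))$), and positive homogeneity together with finiteness on $\M^+_{\tau-fr}$ are read off the explicit finite-spectrum formula, or off the Stieltjes form $\varphi_\alpha(a)=\int_0^\infty\lambda_t(a)\,d\nu_\alpha(t)$ of Proposition~\ref{prop:Stieltjes}. The one substantial point is monotonic increasing additivity on the spectrum: for increasing $f,g$ (with $f(0)=g(0)=0$ when $a$ is non-invertible), the operators $f(a),g(a),(f+g)(a)$ are built from the same monotone family of spectral projections $e_{(s,\infty)}(a)$, so in the representation $\int_0^\infty\lambda_t(\cdot)\,d\nu_\alpha(t)$ the integrands add. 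This is the operator form of the comonotone additivity of the Choquet integral; I would record it once and obtain the iterated version by induction.

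For $(2)\Rightarrow(1)$ I would first build $\alpha$. Set $\alpha(t):=\varphi(p)$ for any projection $p$ with $\tau(p)=t$. This is well defined: in a type II factor two projections with the same finite trace are Murray--von Neumann equivalent, and since their complements also have equal trace (finite minus finite, or $\infty=\infty$ in the ${\rm II}_\infty$ case, and $1-t=1-t$ in the ${\rm II}_1$ case) they are in fact \emph{unitarily} equivalent, so unitary invariance of $\varphi$ makes $\alpha(t)$ independent of the chosen $p$; because a type II factor realizes every trace value in $[0,\tau(I)]$, this defines $\alpha$ on $[0,\infty)$ (resp.\ $[0,1]$). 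Then $\alpha(0)=\varphi(0)=0$ by positive homogeneity, $\alpha$ is increasing by comparing nested projections under monotonicity, and $\alpha(t)<\infty$ for finite $t$ since such $p\in\M^+_{\tau-fr}$. Next I would prove $\varphi=\varphi_\alpha$ on finite-spectrum operators: writing $a=\sum_{k=1}^n a_kp_k$ with $a_1>\cdots>a_n>0$, set $q_k=e_{[a_k,\infty)}(a)=p_1+\cdots+p_k$, so that $a=\sum_{k=1}^n(a_k-a_{k+1})q_k$ with $a_{n+1}=0$. Each $q_k$ is an increasing indicator function of $a$ vanishing at $0$, so the iterated monotonic increasing additivity on the spectrum gives $\varphi(a)=\sum_k\varphi((a_k-a_{k+1})q_k)$, and positive homogeneity turns this into $\sum_k(a_k-a_{k+1})\alpha(\tau(q_k))=\varphi_\alpha(a)$. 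When some $\tau(q_k)=\infty$ (only possible in ${\rm II}_\infty$) I would identify $\varphi(q_k)=\alpha(\infty)$ by applying sequential normality to an increasing sequence of subprojections of $q_k$ with traces tending to infinity; the same limiting argument applied to nested projections $p_m\nearrow p$ with $\tau(p_m)\nearrow\tau(p)$ yields left continuity of $\alpha$ in the ${\rm II}_\infty$ case.

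Finally I would extend to arbitrary $a\in\M^+$ by choosing finite-spectrum $a_m\nearrow a$ in the strong operator topology. In type ${\rm II}_\infty$ both $\varphi$ (by hypothesis) and $\varphi_\alpha$ (by Lemma~\ref{prop:observation}) are sequentially normal, so $\varphi(a)=\lim_m\varphi(a_m)=\lim_m\varphi_\alpha(a_m)=\varphi_\alpha(a)$. I expect the type ${\rm II}_1$ case to be the main obstacle, precisely because there sequential normality of $\varphi$ is neither assumed nor available: instead I would sandwich $b_\epsilon\le a\le c_\epsilon:=b_\epsilon+\epsilon I$ with finite-spectrum $b_\epsilon$ obtained by rounding the spectrum of $a$ down to multiples of $\epsilon$, use monotonicity of $\varphi$, and the elementary identity $\varphi_\alpha(c_\epsilon)=\varphi_\alpha(b_\epsilon)+\epsilon\,\alpha(1)$ coming from $e_{(s,\infty)}(b_\epsilon+\epsilon I)=e_{(s-\epsilon,\infty)}(b_\epsilon)$. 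Since $\alpha(1)=\varphi(I)<\infty$ by the $\tau$-finite-rank hypothesis, $\varphi_\alpha(c_\epsilon)-\varphi_\alpha(b_\epsilon)\to0$, and combining $\varphi_\alpha(b_\epsilon)=\varphi(b_\epsilon)\le\varphi(a)\le\varphi(c_\epsilon)=\varphi_\alpha(c_\epsilon)$ with the continuity of $\varphi_\alpha$ squeezes $\varphi(a)=\varphi_\alpha(a)$. The delicate points are thus this finiteness-driven two-sided approximation in type ${\rm II}_1$ and the verification that the constructed $\alpha$ has the required left-continuity; by contrast the well-definedness of $\alpha$ and the finite-spectrum identity are routine consequences of comparison theory and the additivity axiom.
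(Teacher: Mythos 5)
Your proposal follows essentially the same route as the paper: recover $\alpha$ from $\varphi$ on projections via unitary equivalence of projections with equal trace, verify $\varphi=\varphi_\alpha$ on finite-spectrum operators through the telescoping decomposition $a=\sum_k(a_k-a_{k+1})(p_1+\cdots+p_k)$ into increasing functions of $a$ combined with iterated additivity and positive homogeneity, and extend by sequential normality in type ${\rm II}_\infty$ and by the two-sided sandwich $b_\epsilon\le a\le b_\epsilon+\epsilon I$ (with $\varphi_\alpha$-gap $\epsilon\,\alpha(1)$) in type ${\rm II}_1$, which is exactly the paper's $b_n\le a\le c_n=b_n+\tfrac{\|a\|}{n}I$ argument. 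You diverge in two minor but defensible ways: you obtain monotonic increasing additivity on the spectrum for general $a$ directly from $\lambda_t(f(a))=f(\lambda_t(a))$ and the Stieltjes representation of Proposition~\ref{prop:Stieltjes}, whereas the paper first treats finite spectra algebraically and then passes to the limit by sequential normality; and when some spectral projection $q_k$ has infinite trace you identify $\varphi(q_k)=\alpha(\infty)$ by sequential normality applied to subprojections, which is in fact more careful than the paper's assertion that $\varphi(a)=\infty=\varphi_\alpha(a)$ in that situation (the latter fails when $\alpha$ is bounded). The one point neither you nor the paper settles is the left continuity of the reconstructed $\alpha$ in the type ${\rm II}_1$ case, where sequential normality is not assumed; you rightly flag it as delicate, but, like the paper, you leave it unproved.
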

\begin{proof}
First, consider the cast that $\M$ is a factor of type ${\rm II}_{\infty}$.
  
$(1) \Rightarrow (2)$ Assume that $\varphi$ is a non-linear trace $\varphi = \varphi_{\alpha}$  of the Choquet type associated with  
a monotone increasing function $\alpha$. Then we have already proved that $\varphi_{\alpha}$ is unitarily invariant, monotone,  
and sequentially normal on $\M^+$.  
It is obvious that $\varphi_{\alpha}$ is positively homogeneous by the definition using the change of variable. 
Let $a \in \M^+$ be of $\tau$-finite rank in $\M$ and 
$a = \sum_{k=1}^{n} a_k p_k$ be  the spectral decomposition of $a$ with $a_1 > a_2 >  \dots > a_n >0$ and each $\tau(p_k) < \infty$ for 
$k = 1,2,\dots,n$.
Then 
\begin{align*}
     &  \alpha(\tau(e_{(s,\infty)}(a)))\\
 =  &  \begin{cases}
           \alpha(\tau(0)), & (s \geq a_1) \\
           \alpha(\tau(p_1)), & (a_1 > s \geq a_2) \\
           \alpha(\tau(p_1 +  \dots + p_k)), & (a_k > s \geq a_{k+1}, \   2\le k \le n-1) \\
           \alpha(\tau(p_1 +  \dots + p_n)), & (a_n > s \geq 0) \\
        \end{cases} .
\end{align*}
Therefore we have that 
$$
\varphi_{\alpha}(a) 
 = \sum_{k=1}^{n-1} (a_k- a_{k+1})\alpha(\tau(p_1 + \cdots + p_k))
+ a_n \alpha(\tau(p_1 + \cdots + p_n))  < \infty.
$$
We should note that this formula also holds in the case that  $a_1 \geq  a_2 \geq  \dots \geq a_n  \geq 0$, 
because some terms just become 0.   
We show that $\varphi_{\alpha} $ is monotonic increasing additive on the spectrum.  

First,  assume that $a\in \M^+$ has a finite spectrum and $a = \sum_{k=1}^{n} a_k p_k$ is the spectral decomposition of $a$ with $a_1 \geq a_2 \geq \dots \geq  a_n >0$.  
For any monotone increasing functions $f$ and $g$  in $C(\sigma(a))^+$, 
\begin{gather*}
   f(a) = \sum_{k=1}^{n} f(a_k) p_k, \ g(a) = \sum_{k=1}^{n} g(a_k) p_k, \\
   (f + g)(a) = \sum_{k=1}^{n} (f(a_k) + g(a_k)) p_k,  \\ 
   f(a_1) \geq f(a_2) \geq  \cdots \geq f(a_n) \geq 0, \\ g(a_1) \geq g(a_2) \geq \cdots \geq  g(a_n) \geq 0, \\
  \text{and }  f(a_1)+g(a_1)  \geq f(a_2) + g(a_2) \geq  \cdots \geq  f(a_n) + g(a_n) \geq 0. 
\end{gather*}
Therefore 
\begin{align*}
     & \varphi_{\alpha}((f+g)(a)) \\
   = & \sum_{k=1}^{n-1} (f(a_k) + g(a_k)- f(a_{k+1})-g(a_{k+1}))\alpha(\tau(p_1 + \cdots + p_k)) \\
     & \qquad +( f(a_n) + g(a_n)) \alpha(\tau(p_1 + \cdots + p_n)) \\
   = & \sum_{k=1}^{n-1} (f(a_k) - f(a_{k+1}))\alpha(\tau(p_1 + \cdots + p_k))
             + f(a_n) \alpha(\tau(p_1 + \cdots + p_n)) \\
     &  + \sum_{k=1}^{n-1} (g(a_k) - g(a_{k+1}))\alpha(\tau(p_1 + \cdots + p_k))
         + g(a_n) \alpha(\tau(p_1 + \cdots + p_n)) \\
  = & \varphi_{\alpha}(f(a)) +  \varphi_{\alpha}(g(a)).
\end{align*}
Next, consider general $a\in \M^+$.  
Then there exists an increasing sequence $(b_n)_n$ in $\M^+$ such that each 
$b_n$ has a finite spectrum and $\| b_n - a\|$ converges to $0$.  
Moreover,  there exists a compact subset 
$K \subset {\mathbb R}$ such that for any $n$, $\sigma(b_n) \subset K$ and $\sigma(a) \subset K$. 
For any monotone increasing functions $f$ and $g$  in 
$C([0,\infty))^+$, we have that 
$$
\varphi_{\alpha}((f+g)(b_n)) =  \varphi_{\alpha}(f(b_n)) +  \varphi_{\alpha}(g(b_n)).
$$
We note that $f(b_n) \nearrow f(a)$, $g(b_n) \nearrow g(a)$ and  $(f+g)(b_n) \nearrow (f+g)(a)$. 
Since $\varphi_{\alpha}$ is sequentially normal, 
$$
\varphi_{\alpha}((f+g)(a)) =  \varphi_{\alpha}(f(a)) +  \varphi_{\alpha}(g(a)).
$$
Thus $\varphi_{\alpha} $ is monotonic increasing additive on the spectrum.

$(2)\Rightarrow(1)$
 Assume that $\varphi $ is monotonic increasing additive on the spectrum, unitarily invariant, monotone,  and 
positively homogeneous. 
We also assume that $\varphi $ is sequentially normal and 
 $\varphi(a) < \infty  $ for any $\tau$-finite rank operator $a \in \M^+_{\tau-fr}$.  
Define  a function $\alpha: [0,\infty) \rightarrow [0, \infty)$  with $\alpha(0) = 0$
by $\alpha (x) := \varphi(p_x)$ for some projection $p_x$ in $\M$ with $\tau(p_x) = x \ (x \in [0,\infty)).$  
Since such a projection $p_x$ is of $\tau$-finite rank, $\alpha (x) = \varphi(p_x) < \infty$.  
Since  $\varphi $ is unitarily invariant, $\alpha$ does not depend 
on the choice of such projections. Since  $\varphi $ is monotone, $\alpha$ is monotone increasing. 
Since $\varphi $ is sequentially normal,  $\alpha$ is left continuous. 

We  show that $\varphi = \varphi_{\alpha}$.  We only consider the case that $\M$ is an infinite factor. 
If $\M$ is a finite factor, its proof is a little bit simpler.  
First assume that $a\in \M^+$ has a finite spectrum $\sigma(a) = \{a_1, a_2, \dots ,a_n \}$ 
and $a = \sum_{k=1}^{n} a_k p_k$ be  the spectral decomposition of $a$ with 
$a_1 \geq a_2 \geq \dots \geq  a_n >0$. 
Suppose that $\tau(p_k) = \infty$ for some $k$. 
Since $0 \leq \frac{1}{a_k}p_k \leq a$, monotonicity and positive homogeneity imply that 
$\varphi (a) = \infty  = \varphi_{\alpha}(a)$. 
Therefore we may and do assume that  
$\tau(p_k) < \infty$ for any $k$, that is, $a$ is of $\tau$-finite rank.  
Now define functions  $f_1,f_2,\dots, f_n \in C(\sigma(a))^+$  by 
$$
f_i (x) = \begin{cases}    a_i- a_{i+1}, \quad & \text{ if } x\in  \{ a_1, a_2, \dots, a_i \}  \\
                                 0  &  \text{ if }  x \in \{ a_{i+1}, \dots, a_n, 0 \}
\end{cases}
$$
for $i = 1,2,\dots, n-1$ and
$$
f_n (x) = \begin{cases}    a_n  & \text{ if } x \in  \{a_1, a_2, \dots, a_n \}  \\
0  &  \text{ if }  x \in \{ 0 \}. 
\end{cases}
$$
Then  $f_1, f_2, \dots, f_n$ are monotone increasing functions in  $C(\sigma(a))^+$  for $i=1,2,\ldots,n$ 
such that 
$$
f_1(x) + f_2(x) + \dots +f_n(x) = x  \ \ \ \text{ for } x \in \sigma(a). \\
$$
Therefore 
$$
f_1(a) + f_2(a) + \dots +f_n(a) = a .
$$
Moreover,  we have that 
$$
f_i(a) =  (a_i- a_{i+1})(p_1 + p_2 + \dots +p_i) \quad \text{ for } i=1,2,\dots, n-1
$$
and $f_n(a) =  a_n (p_1 + p_2 + \dots + p_n).$

Since $\varphi $ is monotonic increasing additive on the spectrum and 
positively homogeneous, we have that 
\begin{align*}
   & \varphi(a)  = \varphi(f_1(a) + f_2(a) + \cdots +f_n(a)) \\
  =  &  \varphi(f_1(a)) + \varphi(f_2(a)) + \cdots +\varphi(f_n(a)) \\
  =  &  \sum_{i=1} ^{n-1} \varphi( (a_i- a_{i+1})(p_1 + p_2 + \cdots +p_i)) 
       + \varphi(a_n (p_1 + p_2 + \cdots +p_n))\\
  =  &  \sum_{i=1}^{n-1}  ( a_i- a_{i+1})\varphi(p_1 + p_2 + \cdots + p_i) 
       + a_n \varphi (p_1 + p_2 + \cdots +p_n)   \\
  =  &  \sum_{i=1} ^{n-1}  ( a_i- a_{i+1})(\alpha(\tau(p_1 + p_2 + \dots + p_i)))
       + a_n \alpha(\tau (p_1 + p_2 + \cdots +p_n)) \\
  =  & \varphi_{\alpha}(a).
\end{align*}
Next,  we consider general $a\in \M^+$.  
Then there exists an increasing sequence $(b_n)_n$ in $\M^+$ such that each 
$b_n$ has a finite spectrum and $\| b_n - a\|$ converges to $0$. 
We may assume that each $b_n$ is of 
$\tau$-finite rank.  
Otherwise, $\varphi (a) = \infty  = \varphi_{\alpha}(a)$.  
Both $\varphi$ and $\varphi_{\alpha}$ are sequentially normal and 
$\varphi (b_n) = \varphi_{\alpha}(b_n)$.  Therefore $\varphi (a) = \varphi_{\alpha}(a)$.

The case of a factor of type ${\rm II}_1$ is similarly proved.  But 
we do not need the condition that $\varphi_{\alpha}$ is sequentially normal on $\M^+$ of (2).   
In fact, take $a\in \M^+$ with the spectral decomposition $a =  \int_0^{\infty} \lambda d e_{\lambda}(a)$. 
Define $b_n\in \M^+$ and $c_n\in \M^+$ with 
$$
b_n = \sum_{k=1}^{n-1} \frac{k-1}{n}\|a\| e_{[\frac{k-1}{n}\|a\|, \frac{k}{n}\|a\|)}
         + \frac{n-1}{n}\|a\| e_{[\frac{n-1}{n}\|a\|, \frac{n}{n}\|a\|]}, 
$$
$$
c_n = \sum_{k=1}^{n-1} \frac{k}{n}\|a\|e_{[\frac{k-1}{n}\|a\|, \frac{k}{n}\|a\|)}
         + \frac{n}{n}\|a\|e_{[\frac{n-1}{n}\|a\|, \frac{n}{n}\|a\|]}.  
$$
Then $0 \leq b_n \leq a \leq c_n$ and $0 \leq \varphi(b_n) \leq \varphi(a) \leq \varphi(c_n) < \infty$ by monotonicity. 
Since 
$$
\varphi(c_n) - \varphi(b_n) = \frac{1}{n}\|a\|\alpha(\tau(I)) \rightarrow 0
$$
We have that  $\varphi(b_n) \rightarrow \varphi(a)$.  Similarly  $\varphi_{\alpha}(b_n) \rightarrow \varphi_{\alpha}(a) $. 
Since $\varphi (b_n) = \varphi_{\alpha}(b_n)$, this implies that $\varphi (a) = \varphi_{\alpha}(a)$.
\end{proof}

We shall consider a similar statement on quasi-norms like as Theorem \ref{thm:qnorm} for a factor $\M$ of type ${\rm II}_1$. 
We assume that $\alpha$ is continuous for a while.

%
%
\begin{lemma}
\label{lem:series}
Let $\M$ be a factor of type ${\rm II}_1$ and $a \in \M^+$. Assume that $\alpha$ is continuous.  
Then, for any $\varepsilon > 0$, there exists a positive integer $M$ such that
\[   \varphi_\alpha (a) \le \sum_{i=1}^M \lambda_{(i-1)/M}(a) ( \alpha( i/M ) - \alpha( (i-1)/M ) ) < \varphi_\alpha(a) + \epsilon. \]
\end{lemma}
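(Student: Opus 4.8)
The plan is to read the two displayed inequalities as the assertion that the prescribed sum is the left-endpoint Riemann--Stieltjes overestimate of the Lebesgue--Stieltjes integral
$$
\varphi_{\alpha}(a) = \int_0^1 \lambda_t(a)\, d\nu_{\alpha}(t)
$$
supplied by Proposition \ref{prop:Stieltjes}, together with a quantitative bound showing this overestimate is close. Throughout I use that $t \mapsto \lambda_t(a)$ is decreasing and bounded by $\lambda_0(a) = \|a\|$ (property (1) of the generalized singular value function), and that $\alpha$, being continuous on the compact interval $[0,1]$, is uniformly continuous; since $\alpha$ is continuous, the measure $\nu_{\alpha}$ has no atom at $t=1$, so I may harmlessly integrate over the disjoint pieces $I_i := [(i-1)/M,\, i/M)$, $i = 1, \dots, M$, which partition $[0,1)$, and note $\nu_{\alpha}(I_i) = \alpha(i/M) - \alpha((i-1)/M)$.

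First I would establish the left inequality. On $I_i$, monotonicity of $\lambda_{\,\cdot}(a)$ gives $\lambda_t(a) \le \lambda_{(i-1)/M}(a)$, hence
$$
\int_{I_i} \lambda_t(a)\, d\nu_{\alpha}(t) \le \lambda_{(i-1)/M}(a)\,\bigl(\alpha(i/M) - \alpha((i-1)/M)\bigr).
$$
Summing over $i$ and invoking the Stieltjes representation yields $\varphi_{\alpha}(a) \le \sum_{i=1}^M \lambda_{(i-1)/M}(a)\,(\alpha(i/M) - \alpha((i-1)/M))$.

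For the right inequality I would estimate the error
$$
E_M := \sum_{i=1}^M \lambda_{(i-1)/M}(a)\,\bigl(\alpha(i/M) - \alpha((i-1)/M)\bigr) - \varphi_{\alpha}(a) = \sum_{i=1}^M \int_{I_i} \bigl(\lambda_{(i-1)/M}(a) - \lambda_t(a)\bigr)\, d\nu_{\alpha}(t).
$$
Using monotonicity once more, $\lambda_{(i-1)/M}(a) - \lambda_t(a) \le \lambda_{(i-1)/M}(a) - \lambda_{i/M}(a)$ on $I_i$, so
$$
E_M \le \sum_{i=1}^M \bigl(\lambda_{(i-1)/M}(a) - \lambda_{i/M}(a)\bigr)\bigl(\alpha(i/M) - \alpha((i-1)/M)\bigr).
$$
The crucial point is to separate the two factors: each $\alpha$-increment is at most the modulus of continuity $\omega_{\alpha}(1/M) := \sup\{|\alpha(x) - \alpha(y)| : |x-y| \le 1/M,\ x,y \in [0,1]\}$, while the $\lambda$-differences telescope, $\sum_{i=1}^M (\lambda_{(i-1)/M}(a) - \lambda_{i/M}(a)) = \lambda_0(a) - \lambda_1(a) \le \|a\|$. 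Therefore $E_M \le \omega_{\alpha}(1/M)\,\|a\|$, which tends to $0$ as $M \to \infty$ by uniform continuity. Choosing $M$ with $\omega_{\alpha}(1/M)\,\|a\| < \epsilon$ gives the strict right inequality.

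I expect the telescoping to be the only step requiring care, precisely because it is what prevents the error from accumulating: the crude bound $\lambda_{(i-1)/M}(a) - \lambda_t(a) \le \|a\|$ would yield only $E_M \le \|a\|\,\alpha(1)$, which is useless. By instead charging the smallness to the uniform modulus $\omega_{\alpha}(1/M)$ of the integrator and the total (telescoping) decrease $\le \|a\|$ of the integrand, the bound collapses to a single term with no factor of $M$. The case $a = 0$ is trivial, since then $\|a\| = 0$ and both the sum and $\varphi_{\alpha}(a)$ vanish.
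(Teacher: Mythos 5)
Your proof is correct, but it takes a genuinely different and noticeably cleaner route than the paper's. The paper approximates $\int_0^1 \lambda_s(a)\,d\nu_\alpha(s)$ from the range side: it picks a partition $\|a\|=a_0>a_1>\cdots>a_N=0$ of $[0,\|a\|]$, passes to the induced partition of $[0,1]$ by the level sets $\{t: a_i<\lambda_t(a)\le a_{i-1}\}=[t_{i-1},t_i)$, then refines and perturbs these points using the right-continuity of $t\mapsto\lambda_t(a)$ and of $\alpha$ so that the final partition points can be taken of the form $i/M$; that last perturbation step is the delicate (and somewhat tersely justified) part of the paper's argument. You instead work directly with the uniform partition $\{i/M\}$ and its left-endpoint upper Stieltjes sum: the left inequality is immediate from monotonicity of $\lambda_\cdot(a)$ and $\nu_\alpha([{(i-1)}/{M},i/M))=\alpha(i/M)-\alpha((i-1)/M)$ (with $\nu_\alpha(\{1\})=0$ by continuity of $\alpha$), and the error is bounded by swapping the roles of integrand and integrator, so that the telescoping total decrease $\sum_i(\lambda_{(i-1)/M}(a)-\lambda_{i/M}(a))\le\|a\|$ multiplies the uniform modulus $\omega_\alpha(1/M)$, which tends to $0$. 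This buys you an explicit, quantitative error bound $\omega_\alpha(1/M)\,\|a\|$ valid for \emph{every} $M$, with no need for partitions adapted to the level sets of $\lambda_\cdot(a)$ and no approximation of irrational partition points by rationals; the only facts used are that $\lambda_\cdot(a)$ is decreasing and bounded by $\|a\|$ and that $\alpha$ is uniformly continuous on $[0,1]$. (Note also that $\varphi_\alpha(a)\le\|a\|\,\alpha(1)<\infty$ in the type ${\rm II}_1$ setting, so there is no issue of infinite values, and your separate treatment of $a=0$ closes the strictness of the right-hand inequality.)
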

\begin{proof}
Since $\varphi_\alpha(a) = \int_0^1 \lambda_s(a) d\nu_\alpha(s)$, we can choose a partition of $[0,\|a\|]$ as follows:
\[    \|a\|=a_0 >a_1>a_2>\cdots > a_N=0  \]
and
\begin{align*}
    \varphi_\alpha(a) & = \int_0^1 \lambda_s(a) d\nu_\alpha(s)  \\
        & \le \sum_{i=1}^N a_{i-1} \nu_\alpha (\{t : a_i<\lambda_t(a) \le a_{i-1} \}) < \varphi_\alpha(a) + \epsilon .
\end{align*}
Since the function $\lambda_{-}(a) :[0,1] \ni t \mapsto \lambda_t(a)$ is decreasing and right-continuous,  there exists a partition 
$\Delta : 0=t_0<t_1 \le t_2 \le \cdots \le t_N \le t_{N+1} =1$ of $[0,1]$ such that
\[    \{ t: a_i<\lambda_t(a) \le a_{i-1} \} = [t_{i-1}, t_i )    \]
and $\nu_\alpha ([t_{i-1}, t_i) ) = \alpha(t_i) - \alpha(t_{i-1})$,  $(i=1,2,\ldots, N+1)$ .
So we have
\[  \varphi_\alpha(a) \le \sum_{i=1}^{N+1} a_{i-1} (\alpha(t_i) - \alpha(t_{i-1}) )  < \varphi_\alpha(a) + \epsilon. \]

Put $t'_i$ $(i=0,1,2,\ldots, N+1)$ as follows:
\[   t_i' = \begin{cases} t_i  & t_i \in {\rm Im}\lambda_{-}(a)  \\
               \inf \{ t :  \lambda_t(a) \le a_i \}& t_i \notin {\rm Im}\lambda_{-}(a) \end{cases}.  \]
Then  $0=t_0' <t_1' \le t_2' \le \cdots \le t_N' \le t_{N+1}'=1$,
\[  \lambda_{s}(a) \le \sum_{i=1}^{N+1} \lambda_{t_{i_1}'}(a) \chi_{[t_{i-1}', t_i')}(s) \le \sum_{i=1}^N a_{i-1} \chi_{[t_{i-1}', t_i')}(s) , \]
and
\[  \int_0^1 \lambda_s(a) d\nu_\alpha(s) \le \sum_{i=1}^{N+1} \lambda_{t_{i-1}'} (\alpha(t_i') - \alpha(t_{i-1}') ) \le \sum_{i=1}^{N+1} a_{i-1} (\alpha(t_i) - \alpha(t_{i-1}) ). \]
We remark that, for any refinement 
$\tilde{\Delta} : \tilde{t_0} \le \tilde{t_1} \le \cdots \le \tilde{  t_{L} } $ of the partition $\Delta' : t_0' <t_1' \le t_2' \le \cdots \le t_N' \le t_{N+1}'$
\[  \int_0^1 \lambda_s(a) d\nu_\alpha(s) \le \sum_{i=1}^{L} \lambda_{\tilde{t_{i-1}}} (\alpha(\tilde{t_i}) - \alpha(\tilde{t_{i-1}}) ) 
            \le \sum_{i=1}^{N+1} \lambda_{t_{i-1}'} (\alpha(t_i') - \alpha(t_{i-1}') ),
\] 
because $\lambda_{-}(s)$ is decreasing.

Using the right-continuity of $\lambda_{-}(a)$ and $\alpha$, there exists $\delta>0$ and a partition $\Delta': t_0''\le t_1'' \le \cdots \le t_{N+1}''$ 
such that $t_i''-t_i'<\delta$ and
\[  \varphi_\alpha(a) =  \int_0^1 \lambda_s(a) d\nu_\alpha(s)  \le \sum_{i=1}^{N+1} \lambda_{t_{i_1}''}(a)  (\alpha(\tilde{t_i}'') - \alpha(\tilde{t_{i-1}}'') )
       < \varphi_\alpha(a) + \epsilon. \]
So we can choose $\{t_0'', t_1'', \ldots , t_{N+1}'' \} \subset \{ i/M \;| \; i=0,1,2,\ldots, M\}$ for some positive integer $M$.
This means that
\[   \varphi_\alpha(a)  \le \sum_{i=1}^{M} \lambda_{(i-1)/M}(a)  (\alpha(i/M) - \alpha((i-1)/M) )
       < \varphi_\alpha(a) + \epsilon. \]
\end{proof}

%
%
\begin{theorem}
Let $\M$ be a factor of type ${\rm II}_1$,  $\alpha :[0,1]\rightarrow [0,\infty)$ be a monotone increasing continuous function with $\alpha(0)=0$, and
$\varphi_\alpha : \M^+\rightarrow [0,\infty]$ be a non-linear trace of the Choquet type  associated with $\alpha$. 
Then the following are equivalent:
\begin{enumerate}
  \item[$(1)$] There exists a positive number $\beta$ such that
\[    \varphi_\alpha(|a+b|) \le \beta ( \varphi_\alpha(|a|) + \varphi_\alpha (|b|) ) \text{ for any }a,b \in \M.  \]
  \item[$(2)$] $\sup \{ \dfrac{ \alpha(2s) }{\alpha(s)} \; | \;s\in[0,1] \}$ is bounded, where we set $\alpha(s)=\alpha(1)$ if $s>1$.
  \item[$(3)$] $\M\ni a \mapsto |||a|||_{\alpha,p}(a) = \varphi_\alpha(|a|^p)^{1/p} $ is a quasi-norm for any $p>0$.
  \item[$(4)$] $\M\ni a \mapsto |||a|||_{\alpha,p}(a) = \varphi_\alpha(|a|^p)^{1/p} $ is a quasi-norm for some $p>0$. 
\end{enumerate}
\end{theorem}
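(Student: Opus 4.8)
The plan is to establish the cycle $(1)\Rightarrow(4)\Rightarrow(2)\Rightarrow(3)\Rightarrow(4)$ together with $(2)\Rightarrow(1)$, which forces all four statements to be equivalent. Two links are essentially free. Condition $(1)$ is precisely the assertion that $|||\cdot|||_{\alpha,1}$ satisfies the quasi-triangle inequality; since positive homogeneity of $\varphi_\alpha$ and $|\lambda a|=|\lambda|\,|a|$ give the scaling and positivity axioms, $(1)$ is exactly the $p=1$ instance of $(4)$. Likewise $(3)\Rightarrow(4)$ is trivial. Thus the real content lies in $(4)\Rightarrow(2)$ and in $(2)\Rightarrow(3)$; taking $p=1$ in the proof of the latter simultaneously yields $(2)\Rightarrow(1)$.

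For $(4)\Rightarrow(2)$ I would test the quasi-norm on projections. If $e\in\M$ is a projection with $\tau(e)=s$, then $\lambda_t(e)=1$ for $t<s$ and $0$ for $t\ge s$, so by Proposition \ref{prop:Stieltjes} one gets $\varphi_\alpha(|e|^p)=\varphi_\alpha(e)=\nu_\alpha([0,s))=\alpha(s)$ and hence $|||e|||_{\alpha,p}=\alpha(s)^{1/p}$. Fixing $s\le 1/2$ and choosing orthogonal projections $e,f$ with $\tau(e)=\tau(f)=s$, the element $e+f$ is a projection of trace $2s$, so the quasi-norm inequality with constant $L$ forces $\alpha(2s)^{1/p}\le 2L\,\alpha(s)^{1/p}$, i.e. $\alpha(2s)\le(2L)^p\alpha(s)$ for all $s\le 1/2$ (in particular $\alpha(1)\le(2L)^p\alpha(1/2)$, so $\alpha(1/2)>0$ once $\alpha\not\equiv 0$). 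For $s\in(1/2,1]$ the convention $\alpha(2s)=\alpha(1)$ gives $\alpha(2s)/\alpha(s)=\alpha(1)/\alpha(s)\le\alpha(1)/\alpha(1/2)\le(2L)^p$. Combining the two ranges bounds the supremum in $(2)$.

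The heart is $(2)\Rightarrow(3)$. By Proposition \ref{prop:Stieltjes} and property $(9)$ of the singular value function one has $\varphi_\alpha(|x|^p)=\int_0^1\mu_t(x)^p\,d\nu_\alpha(t)$, and property $(3)$ with equal indices $u=v=t/2$ gives the pointwise bound $\mu_t(a+b)\le\mu_{t/2}(a)+\mu_{t/2}(b)$ on $[0,1]$. For $p\ge1$, Minkowski's inequality in $L^p([0,1],\nu_\alpha)$ then yields $\varphi_\alpha(|a+b|^p)^{1/p}\le\bigl(\int_0^1\mu_{t/2}(a)^p\,d\nu_\alpha\bigr)^{1/p}+\bigl(\int_0^1\mu_{t/2}(b)^p\,d\nu_\alpha\bigr)^{1/p}$, while for $0<p<1$ the scalar inequality $(x+y)^p\le x^p+y^p$ gives $\varphi_\alpha(|a+b|^p)\le\int_0^1\mu_{t/2}(a)^p\,d\nu_\alpha+\int_0^1\mu_{t/2}(b)^p\,d\nu_\alpha$. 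In both cases everything reduces to a single rescaling estimate
\[
\int_0^1 g(t/2)\,d\nu_\alpha(t)\le L\int_0^1 g(t)\,d\nu_\alpha(t)
\]
valid for every decreasing $g\ge0$, where $L:=\sup_s\alpha(2s)/\alpha(s)$. This is the step I expect to be the main obstacle, because $\nu_\alpha$ is a general Lebesgue--Stieltjes measure and the dilation $t\mapsto t/2$ does not act on it by a simple factor. I would prove it by the layer-cake formula: as $g$ is decreasing, $\{t:g(t)>\lambda\}=[0,r(\lambda))$ with $\nu_\alpha$-measure $\alpha(r(\lambda))$, whereas $\{t:g(t/2)>\lambda\}=[0,\min(2r(\lambda),1))$ has measure $\alpha(2r(\lambda))$ by the convention; the pointwise doubling $\alpha(2r)\le L\alpha(r)$ from $(2)$ then gives the estimate after integrating in $\lambda$. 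Feeding this back with $g=\mu_\cdot(a)^p$ and $g=\mu_\cdot(b)^p$ produces $|||a+b|||_{\alpha,p}\le L^{1/p}(|||a|||_{\alpha,p}+|||b|||_{\alpha,p})$ for $p\ge1$ and $|||a+b|||_{\alpha,p}\le 2^{1/p-1}L^{1/p}(|||a|||_{\alpha,p}+|||b|||_{\alpha,p})$ for $0<p<1$, establishing $(3)$ for all $p>0$. As an alternative to the continuous rescaling estimate, one may instead invoke Lemma \ref{lem:series} to approximate $\varphi_\alpha$ by the Riemann sums $\sum_i\lambda_{(i-1)/M}(\cdot)(\alpha(i/M)-\alpha((i-1)/M))$, under which $(2)$ turns into the discrete doubling condition $\sum_{i\le 2n}c_i\le L\sum_{i\le n}c_i$, and then run the telescoping computation of Theorem \ref{thm:qnorm} verbatim before letting $M\to\infty$.
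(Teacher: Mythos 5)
Your proposal is correct, and the logical skeleton (test the inequality on pairs of orthogonal projections to get the doubling condition; use the doubling condition plus Weyl-type singular value estimates to get the quasi-triangle inequality) matches the paper. However, your primary execution of the key implication $(2)\Rightarrow(3)$ is genuinely different from the paper's. The paper discretizes: it invokes Lemma \ref{lem:series} to approximate $\varphi_\alpha$ by finite Riemann--Stieltjes sums $\sum_i \lambda_{(i-1)/L}(\cdot)(\alpha(i/L)-\alpha((i-1)/L))$ and then reruns the Abel-summation/telescoping computation of Theorem \ref{thm:qnorm} on these sums before letting $\epsilon\to 0$; this requires the continuity of $\alpha$ already at the approximation stage and some bookkeeping with partitions. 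You instead stay in the continuum: from $\mu_{t}(a+b)\le\mu_{t/2}(a)+\mu_{t/2}(b)$ and the representation $\varphi_\alpha(|x|^p)=\int_0^1\mu_t(x)^p\,d\nu_\alpha(t)$ of Proposition \ref{prop:Stieltjes}, everything reduces to the dilation estimate $\int_0^1 g(t/2)\,d\nu_\alpha(t)\le L\int_0^1 g(t)\,d\nu_\alpha(t)$ for decreasing $g\ge 0$, which your layer-cake argument proves correctly (the superlevel sets of $g$ and of $g(\cdot/2)$ are initial intervals of $\nu_\alpha$-measure $\alpha(r)$ and $\alpha(\min(2r,1))$ respectively, and the doubling hypothesis finishes it). This buys two things: it bypasses the approximation lemma and the partition arguments entirely, and because the singular-value inequality $\mu_{s+t}(a+b)\le\mu_s(a)+\mu_t(b)$ holds for arbitrary, not just positive, operators, it treats general $a,b\in\M$ directly without the reduction $|a+b|\le u|a|u^*+v|b|v^*$ that the paper implicitly relies on when it assumes $a,b\in\M^+$. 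Your alternative discrete route is exactly the paper's proof. One cosmetic remark: your cycle omits an explicit $(4)\Rightarrow(1)$ or $(1)\Rightarrow$ something other than $(4)$; since you prove $(1)\Leftrightarrow(4)|_{p=1}$, $(4)\Rightarrow(2)$, and $(2)\Rightarrow(3)\Rightarrow(4)$, the equivalence of all four does close up, but it is worth stating that $(2)\Rightarrow(3)$ covers $p=1$ and hence $(2)\Rightarrow(1)$.
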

\begin{proof}
$(1)\Rightarrow(2)$ 
It suffices to show that $\{ \frac{\alpha(2s)}{\alpha(s)} \;| \; s \in[0, 1/2] \}$ is bounded.
Since $\M$ is a factor of type ${\rm II}_1$, for $s\in[0, 1/2]$, there exist orthogonal projections $p, q\in \M$ such that
\[    \tau(p)=\tau(q)=s  \text{ and } \tau(p+q)=2s  .  \]
Then the relation
\[  \alpha(2s) = \varphi_\alpha(p+q) \le \beta (\varphi_\alpha(p)+ \varphi_\alpha(q)) =2\beta \alpha(s) \]
implies $\frac{\alpha(2s)}{\alpha(s)} \le 2\beta$ for all $s \in[0, 1/2] $.

$(2)\Rightarrow(1)$ We may assume that $a,b\in \M^+$ and put $\beta = \sup\{ \frac{\alpha(2s)}{\alpha(s)} \ |  s \in [0,1] \}$.
By Proposition \ref{prop:Stieltjes}, 
\begin{gather*}
 \varphi_\alpha(a)= \int_0^1 \lambda_t(a) d\nu(t),  \; \varphi_\alpha(b)= \int_0^1 \lambda_t(b)d\nu(t), \\
 \varphi_\alpha(a+b)=\int_0^1 \lambda_t(a+b)d\nu(t)  
\end{gather*} 
for $a, b \in \M^+$.
For any $\epsilon >0$, we can choose a positive integer $L$ such that
 \begin{gather*}
    0 \le  \sum_{i=1}^L \lambda_{(i-1)/L}(a) (\alpha(i/L)-\alpha((i-1)/L) - \varphi_\alpha(a)<\epsilon  \\
    0 \le  \sum_{i=1}^L \lambda_{(i-1)/L}(b) (\alpha(i/L)-\alpha((i-1)/L) - \varphi_\alpha(b)<\epsilon  \\
    0 \le  \sum_{i=1}^L \lambda_{(i-1)/L}(a+b)(\alpha(i/L)-\alpha((i-1)/L) - \varphi_\alpha(a+b)<\epsilon  
\end{gather*}
by Lemma \ref{lem:series}.
Using Weyl's inequality (the property (3) of $\mu_t$), we have
\[   \lambda_{2i/L}(a+b) \le \lambda_{i/L}(a)+\lambda_{i/L}(b), \lambda_{(2i+1)/L}(a+b) \le \lambda_{i/L}(a) + \lambda_{i/L}(b)  \]
for $i=0,1,2,\ldots, L$.
We define $c(t)$, $s_t(a)$, $s_t(b)$, and $s_t(a+b)$ for $t\ge 1$ as follows:
\[  c(t)=\alpha(t/L) -\alpha((t-1)/L), \;  s_t(x)=\lambda_{(t-1)/L}(x),   \]
where $x=a,b,$ or $a+b$, then
\begin{gather*}
  s_i(x)=0 \; (i\ge L+1), \quad   \sum_{i=1}^n c(i) = \alpha(n/L) \; (n\in \mathbb{N}),  \\
  \text{and } \max\{ s_{2i-1}(a+b), s_{2i}(a+b)\}  \le s_i(a)+s_i(b)  .
\end{gather*}
By the similar argument in the Theorem \ref{thm:qnorm},
\begin{align*}
        & \varphi_\alpha (a+b)  \\ 
   \le & \sum_{i=1}^L \lambda_{(i-1)/L} (a+b) (\alpha(i/L)-\alpha((i-1)/L)  +\epsilon  \\
   =   &  \sum_{i=1}^L s_{i}(a+b) c(i) + \epsilon =  \sum_{i=1}^{2L} s_{i}(a+b) c(i) + \epsilon \\
   =   &  \sum_{i=1}^L s_{2i-1}(a+b)c(2i-1) + \sum_{i=1}^L s_{2i}(a+b) c(2i) +\epsilon \\
   \le &  \sum_{i=1}^L (s_i(a) + s_i(b))(c(2i-1)+c(2i))+\epsilon \\
   =   &  \sum_{i=1}^L \bigl(  (\sum_{j=1}^{2i} c(j) ) (s_i(a)-s_{i+1}(a) ) +  (\sum_{j=1}^{2i} c(j) ) (s_i(b)-s_{i+1}(b) ) \bigr) +\epsilon  \\
   =   &  \sum_{i=1}^L \bigl(  \alpha(2i/L) (s_i(a)-s_{i+1}(a) ) +  \alpha(2i/L) (s_i(b)-s_{i+1}(b) ) \bigr) +\epsilon  \\
   \le & \beta \sum_{i=1}^L \bigl(  \alpha(i/L) (s_i(a)-s_{i+1}(a) ) +  \alpha(i/L) (s_i(b)-s_{i+1}(b) ) \bigr) +\epsilon  \\
   =   & \beta \sum_{i=1}^L \bigl(  (\sum_{j=1}^{i} c(j) ) (s_i(a)-s_{i+1}(a) ) +  (\sum_{j=1}^{i} c(j) ) (s_i(b)-s_{i+1}(b) ) \bigr) +\epsilon  \\
   =   & \beta (\sum_{i=1}^L s_i(a)c(i) + \sum_{i=1}^L s_i(b)c(i) ) +\epsilon \\
   \le & \beta (\varphi_\alpha(a) + \varphi_\alpha(b) +2\epsilon) + \epsilon .
\end{align*}
Since $\epsilon$ is arbitrary, we have
\[   \varphi_\alpha(a+b) \le \beta (\varphi_\alpha(a) + \varphi_\alpha(b) ).  \]

Using Lemma \ref{lem:series}, we can prove $(1)\Leftrightarrow(2)$ by the similar method to Theorem \ref{thm:qnorm} as above.
For any $a\in \M$, $p>0$, and $\epsilon >0$, we can choose a positive integer $M$ such that
\[  \varphi(|a|^p) \le \sum_{i=1}^M \mu_{(i-1)/M}(a)^p ( \alpha( i/M ) - \alpha( (i-1)/M ) ) < \varphi_\alpha(|a|^p) + \epsilon. \]
Apply the argument of Theorem \ref{thm:qnorm} for 
\[    \sum_{i=1}^M \mu_{(i-1)/M}(a)^p ( \alpha( i/M ) - \alpha( (i-1)/M ) ),  \]
we can also prove $(1)\Leftrightarrow(3)\Leftrightarrow(4)$.
\end{proof}

%
%
\begin{remark} \rm
Let $\M$ be a factor of type ${\rm II}_\infty$. Fix $ p > 0$.  For $a\in \M^+$ with $\tau({\rm supp}(a))<\infty$.
By a similar argument in Lemma \ref{lem:series}, for any $\epsilon>0$, we can choose positive integers $L$ and $M$ such that
$\tau({\rm supp}(a)) \le M/L$ and
\[  ( \varphi(|a|^p) )^{1/p} \le ( \sum_{i=1}^M \mu_{(i-1)/L}(a)^p ( \alpha( i/L) - \alpha( (i-1)/L ) ) )^{1/p} < ( \varphi_\alpha(|a|^p) )^{1/p} + \epsilon. \]

So we can show that the following are equivalent:
\begin{enumerate}
  \item[$(1)$]  There exists a positive number $\beta$ such that
\[    (\varphi_\alpha(|a+b|^p))^{1/p} \le \beta ( (\varphi_\alpha(|a|^p))^{1/p} + (\varphi_\alpha (|b|^p))^{1/p} )   \]
for any $a,b \in \M$ with $\tau({\rm supp}(a)),\tau({\rm supp}(b))<\infty$. 
  \item[$(2)$]   $\sup \{ \dfrac{ \alpha(2s) }{\alpha(s)} \; | \;s\in[0,\infty) \}$ is bounded,.
\end{enumerate}

Moreover, for a fixed $p$ with $p \geq 1$, applying Theorm \ref{thm:norm} to the estimation of $\varphi_\alpha(|a|^p)^{1/p}$,
we can also show that the following conditions are equivalent:

\begin{enumerate}
  \item[(a)]  $\alpha$ is concave.
  \item[(b)]  $|||a+b|||_{\alpha, p} \le |||a|||_{\alpha, p} + |||b|||_{\alpha, p}$, 
\end{enumerate}
for any $a, b\in \M$  with $\tau({\rm supp}(a)),\tau({\rm supp}(b))<\infty$.  
\end{remark}

\begin{example}\rm 
Let $\M$ be a factor of type ${\rm II}_\infty$ and fix $p > 0$. Put $\alpha(s) = s^2$ for $s \in [0, \infty).$ Since 
$\sup \{ \dfrac{ \alpha(2s) }{\alpha(s)} \; | \;s\in[0,\infty) \} = 4$ is bounded, 
there exists a positive number $\beta$ such that
$$
|||a+b|||_{\alpha, p} \le \beta (|||a|||_{\alpha, p} + |||b|||_{\alpha, p}). 
$$
for any $a,b \in \M$ with $\tau({\rm supp}(a)),\tau({\rm supp}(b))<\infty$.
Since $\alpha$ is not concave, $||| . |||_{\alpha, p}$ is not a norm for any $p \ge 1$. 
\end{example}

\begin{example}\rm 
Let $\M$ be a factor of type ${\rm II}_\infty$ and fix $p \ge 1$. Put $\alpha(s) = s^{1/2}$ for $s \in [0, \infty).$ Since  
$\alpha$  is concave, 
$$
|||a+b|||_{\alpha, p} \le |||a|||_{\alpha, p} + |||b|||_{\alpha, p}. 
$$
for any $a,b \in \M$ with $\tau({\rm supp}(a)),\tau({\rm supp}(b))<\infty$.
\end{example}

Under these considerations, we shall show that non-linear traces of the Choquet type are closely related to Lorentz spaces if $\alpha$ is {\it concave}. 
We recall the definition of Lorentz function spaces as in \cite[p.107]{krein-petunin-semenov} and 
Lorentz operator spaces as in \cite[p.65]{L-S-Z}. 

\begin{definition} \rm
Let $\psi \not= 0$ be an increasing concave function on $[0,\infty)$ with $\psi(0) = 0$, and  $b$ be a non-negative decreasing 
function on $(0, \infty)$.  We consider the integral  
$$
\int_0^{\infty} b(t) \ d\psi(t) := \psi(+0)b(+0) + \int_{+0}^{\infty} b(t) \ d\psi(t),  
$$ 
where the second term is an improper Stieltjies integral or which is the same, 
$$
\int_0^{\infty} b(t) \ d\psi(t) = \psi(+0)b(+0) + \int_{0}^{\infty} b(t) \psi'(t)dt,  
$$ 
where the second term is a Lebesgue integral. 
The set $\Lambda_{\psi}$ of all measurable functions $f$ on $(0, \infty)$ for which 
$$
\| f \| _{\psi} :=\int_0^{\infty} f^*(t) \ d\psi(t) < \infty
$$ 
is called  a Lorentz function space and a Banach space with the norm $\| f \| _{\psi}$, 
where  $f^*$ is the decreasing rearrangement of $f$. 

Let $M$ be a semifinite factor with a faithful normal semifinite trace $\tau$. 
The set $\M_{\psi}^{\times}$ of all $\tau$-measurable operators $a  \in  \tilde{\M}$ for which 
$$
\| a \| _{\psi} :=\int_0^{\infty} \mu_t(a) d\nu_{\psi}(t)  < \infty 
$$ 
is called a Lorentz operator space and it is a Banach space with the norm $ \| a \| _{\psi}$,
where  $ \mu_t(a)$ is the $t$-th generalized singular value of  $a$. 
\end{definition}

Lorentz function spaces are typical examples of symmetric function spaces and 
Lorentz operator spaces are typical examples of symmetric operator spaces.  
In \cite{K-S}  using uniform submajorization, Kalton and Sukochev established a 
one-to-one correspondence between the symmetric function spaces and symmetric operator spaces. 
We show that non-linear traces of the Choquet type are closely related to Lorentz operator spaces if 
monotone-increasing left continuous functions $\alpha$ are concave.  But we need to extend the 
equation in Proposition \ref{prop:Stieltjes} from bounded operators to $\tau$-measurable operators.  

\begin{proposition}
\label{prop:lorentz}
Let $\M$ be a factor of type ${\rm II}_{\infty}$. 
Let $\alpha: [0,\infty)  \rightarrow [0, \infty)$ be a 
monotone increasing left continuous function with $\alpha(0) = 0$.  
Let  $\varphi_{\alpha} : \M^+ \rightarrow  [0, \infty]$ be a non-linear trace of the Choquet type associated with $\alpha$ .
Assume that $\alpha$ is concave. 
Then  
for any general $\tau$-measurable operator $a \in \tilde{\M}$, we have that 
$$
\| a \|_{\alpha} = \varphi_{\alpha}(|a|) 
: = \int_0^{\infty}  \alpha(\tau(e_{(s,\infty)}(|a|)))ds = \int_0^{\infty} \mu_t(a) d\nu_{\alpha}(t). 
$$
where we admit $+\infty$.  Moreover,  
the set of all $\tau$-measurable operators $a  \in \tilde{\M}$ for which 
$$
\| a \| _{\alpha} :=  \int_0^{\infty}  \alpha(\tau(e_{(s,\infty)}(|a|)))ds < \infty 
$$ 
coincides with the Lorentz operator space $\M_{\alpha}^{\times}$  and a Banach space with the norm $ \| a \|_{\alpha}$.
\end{proposition}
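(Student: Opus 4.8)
The plan is to reduce everything to the positive operator $|a|$ and to obtain the general $\tau$-measurable case from the bounded case of Proposition \ref{prop:Stieltjes} by a monotone truncation together with the monotone convergence theorem. Since $\|a\|_{\alpha}$ and $\mu_t(a) = \mu_t(|a|) = \lambda_t(|a|)$ depend only on $|a|$, it suffices to treat a positive, possibly unbounded, $\tau$-measurable operator, i.e.\ to establish the identity
$$
\int_0^{\infty} \alpha(\tau(e_{(s,\infty)}(|a|)))\,ds = \int_0^{\infty} \mu_t(a)\,d\nu_{\alpha}(t)
$$
in $[0,\infty]$, where the left-hand side is how $\varphi_{\alpha}(|a|)$ is defined for an unbounded positive operator.

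First I would introduce the bounded truncations $b_n := |a| \wedge n = f_n(|a|)$ with $f_n(x) = \min(x,n)$, so that each $b_n \in \M^+$ and $b_n \nearrow |a|$. For the left-hand side, a direct spectral computation gives $e_{(s,\infty)}(b_n) = e_{(s,\infty)}(|a|)$ for $s < n$ and $e_{(s,\infty)}(b_n) = 0$ for $s \ge n$; hence $\varphi_{\alpha}(b_n) = \int_0^n \alpha(\tau(e_{(s,\infty)}(|a|)))\,ds$, which increases to $\int_0^{\infty} \alpha(\tau(e_{(s,\infty)}(|a|)))\,ds$ by monotone convergence. For the right-hand side, property $(9)$ of the generalized singular value function (or a direct check) gives $\mu_t(b_n) = \min(\mu_t(a),n) \nearrow \mu_t(a)$ for every $t$, so again monotone convergence yields $\int_0^{\infty} \mu_t(b_n)\,d\nu_{\alpha}(t) \nearrow \int_0^{\infty} \mu_t(a)\,d\nu_{\alpha}(t)$. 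Since each $b_n$ is bounded, Proposition \ref{prop:Stieltjes} applies to give $\varphi_{\alpha}(b_n) = \int_0^{\infty} \mu_t(b_n)\,d\nu_{\alpha}(t)$, and letting $n \to \infty$ produces the desired identity, both sides admitting the value $+\infty$. I would keep the truncation $|a|\wedge n$ as the primary one precisely because it makes the spectral projections on the left collapse exactly; the alternative truncation $|a|e_{[0,n]}(|a|)$ also works but there one must invoke the left-continuity of $\alpha$ (the same property that produced sequential normality in Lemma \ref{prop:observation}) to pass $\alpha$ through the increasing limit $\tau(e_{(s,n]}(|a|)) \nearrow \tau(e_{(s,\infty)}(|a|))$.

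With the integral formula in hand, the identification with the Lorentz operator space is essentially a matching of definitions. Taking the admissible increasing concave function $\psi = \alpha$ (nonzero, with $\alpha(0)=0$), the quantity $\int_0^{\infty} \mu_t(a)\,d\nu_{\alpha}(t)$ is by definition the Lorentz norm, so the set $\{a \in \tilde{\M} : \varphi_{\alpha}(|a|) < \infty\}$ coincides as a set with $\M_{\alpha}^{\times}$ and carries the same norm $\|a\|_{\alpha}$. That $\|\cdot\|_{\alpha}$ is genuinely a norm --- the triangle inequality being the only nontrivial axiom --- and that $\M_{\alpha}^{\times}$ is complete is the known structure theory of Lorentz operator spaces; concavity of $\alpha$ enters at exactly this point, through submajorization and the subadditivity $\mu_{s+t}(a+b) \le \mu_s(a) + \mu_t(b)$ of property $(3)$.

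The main obstacle I expect lies entirely in the extension step: justifying the two monotone-convergence passages and checking that the bounded-case identity of Proposition \ref{prop:Stieltjes} survives the limit in the degenerate situations --- when $a$ is unbounded and $\nu_{\alpha}$ carries an atom at $0$ (so that $\mu_0(b_n) = n$ forces $\alpha(0{+})\,n \to \infty$ consistently on both sides), and when either integral is already infinite. Once the approximation is arranged so that both sides increase monotonically to their limits, these cases are automatically consistent, and the remaining completeness and norm assertions are a citation to the theory of Lorentz spaces rather than a fresh computation.
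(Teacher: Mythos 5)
Your proposal is correct and follows essentially the same route as the paper: reduce to the bounded case of Proposition \ref{prop:Stieltjes} by a monotone truncation, pass to the limit on both sides by the monotone convergence theorem, and then identify the resulting quantity with the Lorentz norm by definition. The only difference is your choice of truncation $|a|\wedge n$ in place of the paper's $|a|e_{[0,n]}(|a|)$, which is in fact slightly cleaner, since for your truncation the identity $e_{(s,\infty)}(|a|\wedge n)=e_{(s,\infty)}(|a|)$ for $s<n$ holds exactly, whereas the paper's truncation really yields $e_{(s,n]}(|a|)$ and needs the left-continuity of $\alpha$ to close the gap --- a point you correctly flag.
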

\begin{proof}
For any $\tau$-measurable operator $a \in \tilde{\M}$, put $e_n = e_{[0,n]}(|a|)$.  Then 
$|a|e_n$ is bounded and 
$$
\mu_t(|a|e_n) \nearrow \mu_t(|a|), 
$$ 
as in Fack-Kosaki  \cite[p.278]{F-K} and 
$$
\int_0^{\infty} \mu_t(|a|e_n) d\nu_{\alpha}(t) \nearrow \int_0^{\infty} \mu_t(|a|) d\nu_{\alpha}(t)
$$
as $n \rightarrow \infty$
by the monotone convergence theorem.  Since 
$$
e_{(s,\infty)}(|a|e_n) 
=  \begin{cases}    
e_{(s,\infty)}(|a|)  & \text{ if } s < n \\
0  &  \text{ if }  n \leq s 
\end{cases}
$$
and  $\alpha(\tau(0)) = 0$, 
$$
\int_0^{\infty}  \alpha(\tau(e_{(s,\infty)}(|a|e_n)))ds = \int_0^{n}  \alpha(\tau(e_{(s,\infty)}(|a|)))ds
 \nearrow  \int_0^{\infty}  \alpha(\tau(e_{(s,\infty)}(|a|)))ds 
$$
as $n \rightarrow \infty$.   Therefore the general case is reduced to the bounded case. Now the rest is clear. 
\end{proof}

\section{Non-linear traces of the Sugeno type}
In this section,  we study non-linear traces of the Sugeno type on semifinite factors. 
We recall the Sugeno integral with respect to a monotone measure.

\begin{definition} \rm (Sugeno integral)
Let $\Omega$ be a set and ${\mathcal B}$ a ring of sets on $\Omega$.  
Let  $\mu: {\mathcal B} \rightarrow [0, \infty]$ be a monotone measure on $\Omega$. 
Let $f$ be a non-negative measurable function on $\Omega$.  Then the Sugeno integral of $f$ is defined by 
\begin{align*}
{\rm (S)} \int f d\mu & := \sup_{s \in[0,\infty)} s \land  \mu ( \{ x \in \Omega \ | \ f(x) \geq s \})  \\
& =   \sup_{s \in[0,\infty)} s \land  \mu ( \{ x \in \Omega \ | \ f(x) > s \}) . 
\end{align*}
\end{definition}

\begin{remark} \rm  If $f$ is a simple function with 
$$
f = \sum_{i=1}^{n} a_i \chi _{A_i}  \ \ \ ( A_i \cap A_j = \emptyset  \  ( i\not=j) ), 
$$
then the Sugeno integral of $f$ is given by 

$$
{\rm (S)}\int f d\mu = \lor_{i=1}^{n} (a_{\sigma(i) } \land 
\mu(A_{\sigma(1)} \cup A_{\sigma(2)} \cup \dots \cup A_{\sigma(i)} ) ), 
$$
where $\sigma$ is a permutation on $\{1,2,\dots n\}$ such that 
$a_{\sigma(1)} \geq a_{\sigma(2)} \geq  \dots \geq a_{\sigma(n)}$ . 
 \end{remark}

The Sugeno integral has the following properties: \\
 (1) (monotonicity)  
For any  non-negative measurable functions $f$ and $g$  on $\Omega$, 

$$
0 \leq f \leq g \Rightarrow 0 \leq {\rm (S)} \int f d \mu \leq   {\rm (S)}\int g d \mu .
$$ 
\noindent
(2) (comonotonic F-additivity) 
For any  non-negative measurable functions $f$ and $g$  on $\Omega$, 
if $f$ and $g$ are comonotone, then 
$$
{\rm (S)} \int (f \lor g)  d \mu
= {\rm (S)} \int f d \mu \lor {\rm (S)}\int g d \mu .
$$  

\noindent
(3) (positive F-homogeneity)
For any  non-negative measurable function $f$ on $\Omega$ and 
any scalar $k \geq 0$, 
$$
 {\rm (S)} \int k \land f d \mu = k \land  {\rm (S)} \int f d \mu, 
$$
where $0 \land \infty = 0$.

\begin{remark} \rm 
It is important to note that the Sugeno integral can be {\it essentially} characterized as a non-linear monotone positive functional 
which is positively F-homogeneous and comonotone F-additive.  
\end{remark}

\begin{definition} \rm
Let $\M$ be a factor of type  $II_{\infty}$. 
Let $\alpha: [0,\infty)  \rightarrow [0, \infty)$ be a 
monotone increasing function with $\alpha(0) = 0$. Put $\alpha(\infty) = \lim_{t \to \infty}\alpha(t)$ by 
convention.  
Define a non-linear trace $\psi_{\alpha} : \M^+ \rightarrow  [0, \infty)$ of the Sugeno type associated with $\alpha$ 
as follows: For $a \in \M^+$ with the spectral decomposition $a = \int_0^{\infty} \lambda d e_{\lambda}(a)$,  let  
$$
\psi_{\alpha}(a) : = \sup_{s \in [0,\infty)} s \land \alpha(\tau(e_{(s,\infty)}(a))). 
$$
We usually assume that $\alpha$ is continuous.  

Similarly we define  a non-linear trace $\psi_{\alpha}$ of the Sugeno type 
for a factor of type $II_1$ with  $\alpha: [0,1]  \rightarrow [0, \infty)$ without saying any more. 

We should note that 
$$
\psi_{\alpha}(a) 
= \sup_{s \in [0,\infty)} s \land \alpha(\tau(e_{[s,\infty)}(a))). 
$$

In fact, since $\tau$ and $\alpha$ are monotone increasing, 
$$
A :=  \sup_{s \in [0,\infty)} s \land \alpha(\tau(e_{(s,\infty)}(a)))
\leq \sup_{s \in [0,\infty)} s \land \alpha(\tau(e_{[s,\infty)}(a))). 
$$
Conversely, we show that for any $s \in [0,\infty)$ 
$$
s \land \alpha(\tau(e_{[s,\infty)}(a))) \leq A. 
$$
It is trivial if $s = 0$.  Take any $s >0$. For any $\varepsilon >0$ with $0 < \varepsilon < s$, there exists 
$s' >0$ such that $ s -  \varepsilon < s' <s$.  Then 
\begin{align*}
& s \land \alpha(\tau(e_{[s,\infty)}(a))) \leq (s' + \varepsilon) \land \alpha(\tau(e_{(s',\infty)}(a))) \\
& \leq (s' \land \alpha(\tau(e_{(s',\infty)}(a))) ) + \varepsilon \leq A +  \varepsilon. 
\end{align*}
Taking $\varepsilon \to 0$, we get the desired fact. 

Moreover,  for any unitary $u \in \M$, 
$$
\psi_{\alpha}(uau^*) =  \psi_{\alpha}(a),  
$$
that is,  $\psi_{\alpha}$ is unitarily invariant.

For example,  if $a$ has a finite spectrum and 
$a = \sum_{i=1}^{n} a_ip_i$ is the spectral decomposition with $a_1 \geq a_2 \geq \dots \geq a_n > 0$, then 
$$
\psi_{\alpha}(a) =  \lor_{i=1} ^{n}  (a_i  \land  \alpha(\tau(p_1 + \dots + p_i)) ). 
$$
In particular, for any projection $p \in \M$ and any positive scalar $c$, we have that 
$$
\psi_{\alpha}(cp)  = c \land  \alpha(\tau(p) ).  
$$
\end{definition}

We can express non-linear traces of the Sugeno type using the ``generalized $t$-th eigenvalues'' $ \lambda_t(a)$.  
We remark that for any projection $p \in \M$ and a scalar $c \geq 0$,  we have that 
$\lambda_t(cp) = c \chi_{[0,\tau(p))}(t)$  for $t \in [0,\infty)$.  Hence 
$$
\sup_{t \in [0,\infty)} \lambda_t(cp) \land \alpha(t)  = c \land  \alpha(\tau(p) )
$$
if $\alpha$ is continuous. 

\begin{proposition} Let $\M$ be a factor of Type $II_{\infty}$.  
Let $\alpha: [0,\infty)  \rightarrow [0, \infty)$ be a 
monotone increasing continuous function with $\alpha(0) = 0$.   
Let  $\psi_{\alpha} : \M^+ \rightarrow  [0, \infty)$ be a non-linear trace of the Sugeno type associated with $\alpha$ . 
Then the non-linear trace $\psi_{\alpha}$ is also described as follows:
For any $a \in \M^+$, we have that  
$$
\psi_{\alpha}(a) := \sup_{s \in [0,\infty)} s \land \alpha(\tau(e_{(s,\infty)}(a))) 
= \sup_{t \in [0,\infty)} \lambda_t(a) \land \alpha(t) \leq \lambda_0(a) = \|a\|. 
$$
Moreover,  $\psi_{\alpha}$ is lower semi-continuous on  $\M^+$ in the operator norm.
\end{proposition}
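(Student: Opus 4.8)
The plan is to establish the displayed equality by exploiting the one-sided duality between the distribution function $s \mapsto \tau(e_{(s,\infty)}(a))$ and the generalized singular value function $t \mapsto \lambda_t(a)$, and then to obtain lower semi-continuity for free from the norm-continuity of each $\lambda_t$. First I would write $d(s) := \tau(e_{(s,\infty)}(a))$, which is decreasing and right-continuous in $s$, recall that by definition $\lambda_t(a) = \inf\{s \geq 0 : d(s) \leq t\}$, and abbreviate $A := \sup_{s \geq 0} s \wedge \alpha(d(s))$ and $B := \sup_{t \geq 0} \lambda_t(a) \wedge \alpha(t)$, so that the goal is $A = B$.

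To show $B \leq A$, I fix $t$ and take any $s$ with $0 \leq s < \lambda_t(a)$; the definition of $\lambda_t(a)$ forces $d(s) > t$, so $\alpha(d(s)) \geq \alpha(t)$ by monotonicity of $\alpha$, and hence $s \wedge \alpha(d(s)) \geq s \wedge \alpha(t)$. Letting $s \nearrow \lambda_t(a)$ gives $A \geq \lambda_t(a) \wedge \alpha(t)$ (the case $\lambda_t(a) = 0$ being trivial), and taking the supremum over $t$ yields $A \geq B$.

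For the reverse inequality I fix $s$ and set $t := d(s)$. For every $t' < t$ and every $s'' \leq s$, monotonicity of $d$ gives $d(s'') \geq d(s) = t > t'$, so $s''$ lies outside $\{\,s'' : d(s'') \leq t'\,\}$ and therefore $\lambda_{t'}(a) \geq s$, whence $\lambda_{t'}(a) \wedge \alpha(t') \geq s \wedge \alpha(t')$. Letting $t' \nearrow t$ and invoking the continuity of $\alpha$ to pass $\alpha(t') \to \alpha(t) = \alpha(d(s))$ from below, I obtain $B \geq s \wedge \alpha(d(s))$; the supremum over $s$ then gives $B \geq A$ (the boundary case $d(s) = 0$ is immediate since $\alpha(d(s)) = 0$, and the case $d(s) = \infty$ is covered by letting $t' \to \infty$). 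This settles $A = B$. The inequality $\psi_\alpha(a) \leq \lambda_0(a) = \|a\|$ then follows at once from $\lambda_t(a) \leq \lambda_0(a)$ for all $t$, together with $\lambda_0(a) = \|a\|$ from the definition.

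Finally, for lower semi-continuity I would invoke property $(8)$ of the generalized singular values, which for $a, b \in \M^+$ reads $|\lambda_t(a) - \lambda_t(b)| = |\mu_t(a) - \mu_t(b)| \leq \|a - b\|$; thus each $a \mapsto \lambda_t(a)$ is $1$-Lipschitz and each $a \mapsto \lambda_t(a) \wedge \alpha(t)$ is norm-continuous on $\M^+$. Since $\psi_\alpha = \sup_{t \geq 0}\bigl(\lambda_t(\cdot) \wedge \alpha(t)\bigr)$ is a pointwise supremum of continuous functions, it is lower semi-continuous. I expect the main obstacle to be the one-sided character of the duality between $d$ and $\lambda$: monotonicity of $\alpha$ already forces $B \leq A$, but the reverse estimate approaches the target value $\alpha(d(s))$ strictly from below, which is exactly why the continuity hypothesis on $\alpha$ is indispensable there and cannot be weakened to mere monotonicity.
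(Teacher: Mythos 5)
Your proof is correct and follows essentially the same route as the paper: both directions of the equality $A=B$ are obtained by the same $\varepsilon$-approximation interplay between the definition of $\lambda_t(a)$ as an infimum and the monotonicity/continuity of $\alpha$, and lower semi-continuity is deduced, exactly as in the paper, from the $1$-Lipschitz continuity of $a\mapsto\lambda_t(a)$ together with the fact that a pointwise supremum of continuous functions is lower semi-continuous. Your explicit treatment of the boundary cases $d(s)=0$ and $d(s)=\infty$ is a minor refinement the paper leaves implicit, but it does not change the argument.
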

\begin{proof}
Let $a \in \M^+$. Recall that 
$$
 \lambda_t(a) := \inf \{s \geq 0 |\ \tau(e_{(s, \infty)}(a)) \leq t \} = \min \{s \geq 0 |\ \tau(e_{(s, \infty)}(a)) \leq t \}. 
$$
Let $A = \sup_{s \in [0,\infty)} s \land \alpha(\tau(e_{(s,\infty)}(a))) $ and 
$B =  \sup_{t \in [0,\infty)} \lambda_t(a) \land \alpha(t)$.\\
($A \geq B$): Take any $t \in [0,\infty)$. 
If  $\lambda_t(a) = 0$, then  $A \geq 0 = \lambda_t(a) \land \alpha(t)$. So we may assume that 
$\lambda_t(a) \not= 0$. For any $\varepsilon$ with $0 < \varepsilon < \lambda_t(a)$, 
there exists $s'$ such that 
$$
\lambda_t(a) - \varepsilon < s' < \lambda_t(a). 
$$
By the definition of  $\lambda_t(a)$, we have that 
$$
\tau(e_{(s', \infty)}(a)) > t.
$$
Since $\alpha$ is increasing, $\alpha(\tau(e_{(s', \infty)}(a))) \geq \alpha(t)$. Hence 
$$
A \geq s' \land \alpha(\tau(e_{(s', \infty)}(a))) \geq (\lambda_t(a) - \varepsilon) \land \alpha(t) 
\geq \lambda_t(a)  \land \alpha(t) - \varepsilon. 
$$
Letting $\varepsilon \to 0$, we have that $A \geq \lambda_t(a)  \land \alpha(t)$. This implies that $A \geq B$. \\
($A \leq B$): Take any $s \in [0,\infty)$. 
If $s = 0$ or $\tau(e_{(s,\infty)}(a)) = 0$,  Then $s \land \alpha(\tau(e_{(s,\infty)}(a))) = 0 \leq B$.  
So we may assume that $s \not= 0$ and $\tau(e_{(s,\infty)}(a)) \not= 0$. 
For any $\varepsilon$ with $0 < \varepsilon < \tau(e_{(s,\infty)}(a))$, 
put $t := \tau(e_{(s,\infty)}(a)) -  \varepsilon > 0$.  Since 
$\tau(e_{(s,\infty)}(a)) = t + \varepsilon > t$, 
by the definition of  $\lambda_t(a)$, we have that $s < \lambda_t(a)$.  
Therefore 
$$
s \land \alpha(\tau(e_{(s,\infty)}(a)))  \leq \lambda_t(a) \land \alpha(t + \varepsilon). 
$$
Since $\alpha$ is continuous on $[0,  \infty)$, letting $\varepsilon \to 0$, we have that 
$$
s \land \alpha(\tau(e_{(s,\infty)}(a)))  \leq \lambda_t(a) \land \alpha(t) \leq B. 
$$
This implies that $A \leq B$. 

Moreover, since   $\M^+ \ni a \rightarrow \lambda_t(a)$ is operator norm continuous, 
$\psi_{\alpha} : a \mapsto  \sup_{t \in [0,\infty)} \lambda_t(a) \land \alpha(t)$ is lower semi-continuous on  $\M^+$ in the operator norm.

\end{proof}

\begin{definition} \rm 
Let $\M$ be a factor of type ${\rm II}_{\infty}$ with a trace $\tau$ or a factor of type  ${\rm II}_{1}$ with a normalized trace $\tau$. 
Let $\psi : \M^+ \rightarrow  {\mathbb C}^+$  be a non-linear positive map.
\begin{itemize}
  \item $\psi$ is {\it positively F-homogeneous} if 
$\psi(kI \land a) = k \land \psi(a)$ for any $a \in  M^+$ and any scalar $k \geq 0$.
  \item $\psi$ is {\it monotonic increasing F-additive on the spectrum} if 
$$
\psi((f \lor g)(a)) = \psi(f(a)) \lor  \psi(g(a))
$$  
for any $a \in  M^+$ and 
any monotone increasing functions $f$ and $g$  in $C(\sigma(a))^+$, 
where $f(a)$ is a functional calculus of $a$ by $f$. 
Then by induction, we also have 
\[   \psi((\bigvee_{i=1}^n f_i)(a)) = \bigvee_{i=1}^n \psi(f_i(a))  \]
for any monotone increasing functions $f_1, f_2, \dots, f_n$ in  $C(\sigma(a))^+$ for $i=1,2,\dots,n$.
\end{itemize}
\end{definition}

We characterize non-linear traces of the Sugeno type on  $\M^+$. 
\begin{theorem} 
Let $\M$ be a factor of type ${\rm II}_{\infty}$.  
Let $\psi : \M^+ \rightarrow  {\mathbb C}^+$ be a non-linear 
positive map.  Then the following are equivalent:
\begin{enumerate}
\item[$(1)$]  $\psi$ is a non-linear trace $\psi = \psi_{\alpha}$  of the Sugeno type associated with  
a monotone increasing continuous function $\alpha: [0, \infty)\rightarrow [0, \infty)$  with $\alpha(0) = 0$. 
\item[$(2)$] $\psi$ is monotonic increasing F-additive on the spectrum, unitarily invariant, monotone,  
positively F-homogeneous, lower semi-continuous on $\M^+$ in the operator norm 
and for any finite projection $p \in \M$, 
$$\lim_{c\rightarrow \infty} \psi(cp) < +\infty .$$ 
Moreover if we put $\alpha: [0, \infty)\rightarrow [0, \infty)$  by $\alpha (t) = \lim_{c\rightarrow \infty} \psi(cp)$ 
for $t = \tau(p)$ with some finite projection $p \in \M$,  then $\alpha$ is continuous.  
\end{enumerate}
\end{theorem}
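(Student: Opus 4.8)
The plan is to mirror the preceding Choquet-type characterization, systematically replacing $(+,\ \mathrm{sum})$ by $(\vee,\wedge)$ and using the representation $\psi_\alpha(a)=\sup_{t\ge 0}\lambda_t(a)\wedge\alpha(t)$ from the previous Proposition, together with the two lattice identities $(x\wedge z)\vee(y\wedge z)=(x\vee y)\wedge z$ and $k\wedge\sup_i x_i=\sup_i(k\wedge x_i)$. For $(1)\Rightarrow(2)$, unitary invariance, monotonicity, and norm lower semicontinuity were already recorded for $\psi_\alpha$; from $|\lambda_t(a)-\lambda_t(b)|\le\|a-b\|$ one in fact gets $|\psi_\alpha(a)-\psi_\alpha(b)|\le\|a-b\|$, so $\psi_\alpha$ is norm-Lipschitz. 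Positive $F$-homogeneity is immediate from $\lambda_t(kI\wedge a)=k\wedge\lambda_t(a)$ (property $(9)$) and $k\wedge\sup=\sup(k\wedge\cdot)$. For $F$-additivity on the spectrum I would first treat a finite-spectrum $a=\sum a_k p_k$ with $a_1>\cdots>a_n$: for increasing $f,g$ the numbers $f(a_k),g(a_k),\max(f(a_k),g(a_k))$ are all decreasing in $k$, so the finite formula $\psi_\alpha(a)=\bigvee_i a_i\wedge\alpha(\tau(p_1+\cdots+p_i))$ plus distributivity yields $\psi_\alpha((f\vee g)(a))=\psi_\alpha(f(a))\vee\psi_\alpha(g(a))$ at once; the general case then follows by approximating $a$ in norm by finite-spectrum operators and using continuity of $f,g,f\vee g$ on the common compact spectrum with the norm-Lipschitz property of $\psi_\alpha$. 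Finiteness $\lim_c\psi_\alpha(cp)=\alpha(\tau(p))<\infty$ for finite $p$ and continuity of $\alpha$ are built in.

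For $(2)\Rightarrow(1)$, put $\alpha(t):=\lim_{c\to\infty}\psi(cp)$ for any finite $p$ with $\tau(p)=t$; unitary invariance makes this independent of $p$, monotonicity makes $\alpha$ increasing with $\alpha(0)=0$, and continuity is assumed. The first key step is the scalar--projection formula $\psi(cp)=c\wedge\alpha(\tau(p))$ for finite $p$: applying $F$-homogeneity to $kI\wedge(Mp)=\min(k,M)p$ and letting $M\to\infty$ gives $\psi(kp)=k\wedge\lim_M\psi(Mp)=k\wedge\alpha(\tau(p))$. The second step recovers $\psi=\psi_\alpha$ on finite-spectrum, $\tau$-finite-rank operators: for $a=\sum_{k=1}^n a_k p_k$ I would use the increasing functions $f_i\in C(\sigma(a))^+$ with $f_i\equiv a_i$ on $\{a_1,\dots,a_i\}$ and $f_i\equiv 0$ on $\{a_{i+1},\dots,a_n,0\}$, which satisfy $\bigvee_{i=1}^n f_i(x)=x$ on $\sigma(a)$ and $f_i(a)=a_i(p_1+\cdots+p_i)$. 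Then $F$-additivity on the spectrum and the scalar--projection formula give $\psi(a)=\bigvee_i\psi(f_i(a))=\bigvee_i a_i\wedge\alpha(\tau(p_1+\cdots+p_i))=\psi_\alpha(a)$.

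The remaining step is the passage to general $a\in\M^+$, and this is where the main obstacle lies. For operators all of whose spectral projections $e_{(s,\infty)}(a)$ have finite trace, one discretizes the spectrum to obtain finite-spectrum, $\tau$-finite-rank operators $b_n\nearrow a$ with $\|b_n-a\|\to 0$; then monotonicity together with norm lower semicontinuity forces $\psi(b_n)\to\psi(a)$, so $\psi=\psi_\alpha$ on this class. The genuine difficulty is the opposite situation, operators possessing an \emph{infinite-trace} spectral projection, where one must establish the single-level identity $\psi(cq)=c\wedge\alpha(\infty)$ with $\alpha(\infty):=\lim_{t\to\infty}\alpha(t)$ and $q$ an infinite projection. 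The inequality $\psi(cq)\ge c\wedge\alpha(\infty)$ is easy: for each finite $p\le q$, monotonicity gives $\psi(cq)\ge\psi(cp)=c\wedge\alpha(\tau(p))$, and the supremum over such $p$ produces $c\wedge\alpha(\infty)$. The reverse inequality is the crux, and the subtle point is that \emph{no} finite-trace (hence norm-approximable) operator lies within norm-distance $<c$ of $cq$, so norm lower semicontinuity alone cannot cap $\psi(cq)$ from above.

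My plan for this crux is to upgrade the monotone control from norm to strong-operator limits and to prove the normality-type statement $\psi(cq)=\sup\{\psi(cp):p\le q,\ p\ \text{finite}\}$, i.e. that $\psi$ respects the increasing net $cp\nearrow cq$ — precisely the role played by the sequential-normality hypothesis in the Choquet-type theorem. Granting this, the two inequalities match, and a final step handles an arbitrary $a$ by splitting off its infinite-trace bulk with an $F$-additive decomposition $a=(f\vee g)(a)$, where $g=\min(\,\cdot\,,s^*)$ captures the infinite-trace part at the critical level $s^*$ and $f$ leaves a $\tau$-compact tail, reducing to the two cases already treated. I expect establishing this monotone/strong-operator normality for the abstractly given $\psi$ — as opposed to merely its norm lower semicontinuity — to be the hardest and most delicate part of the argument.
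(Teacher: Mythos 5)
Your proposal tracks the paper's argument closely for $(1)\Rightarrow(2)$ and for the finite-trace part of $(2)\Rightarrow(1)$: the scalar--projection formula $\psi(cp)=c\wedge\alpha(\tau(p))$ obtained from F-homogeneity, the decomposition $a=(\bigvee_i f_i)(a)$ with $f_i(a)=a_i(p_1+\cdots+p_i)$, and the passage to general $a\in\M^+$ by monotone norm-approximation from below combined with lower semicontinuity are exactly the paper's steps. (The paper proves F-additivity of $\psi_\alpha$ directly for arbitrary $a$ from $\lambda_t((f\vee g)(a))=f(\lambda_t(a))\vee g(\lambda_t(a))$ rather than by finite-spectrum approximation, but your Lipschitz-based approximation is a harmless variant.)

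The genuine gap is the step you yourself flag as the crux and leave as a ``plan'': the identity $\psi(cq)=c\wedge\alpha(\infty)$ for an infinite projection $q$, equivalently the normality statement $\psi(cq)=\sup\{\psi(cp):p\le q,\ p\ \text{finite}\}$. This is not merely delicate; it cannot be derived from the hypotheses listed in $(2)$. Consider $\psi'(a):=\psi_\alpha(a)\vee\bigl(\lambda_\infty(a)\wedge\gamma\bigr)$, where $\lambda_\infty(a):=\lim_{t\to\infty}\lambda_t(a)=\inf\{s\ge 0:\tau(e_{(s,\infty)}(a))<\infty\}$, $\alpha$ is bounded (say $\alpha(t)=t/(1+t)$), and $\gamma>\alpha(\infty)$. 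The functional $\lambda_\infty$ is unitarily invariant, monotone, $1$-Lipschitz in the operator norm (being a pointwise limit of the $1$-Lipschitz maps $a\mapsto\lambda_t(a)$), satisfies $\lambda_\infty(kI\wedge a)=k\wedge\lambda_\infty(a)$ and $\lambda_\infty((f\vee g)(a))=\lambda_\infty(f(a))\vee\lambda_\infty(g(a))$ (since $e_{(s,\infty)}((f\vee g)(a))=e_{(s,\infty)}(f(a))\vee e_{(s,\infty)}(g(a))$ and a join of two $\tau$-finite projections is $\tau$-finite), and vanishes on $cp$ for every finite projection $p$. Hence $\psi'$ satisfies every condition in $(2)$ with the same induced $\alpha$, yet $\psi'(cq)=c\wedge\gamma>c\wedge\alpha(\infty)=\psi_\alpha(cq)$ for $\alpha(\infty)<c\le\gamma$ and $q$ infinite. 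So the strong-operator normality you plan to ``establish'' for the abstract $\psi$ is an additional hypothesis, not a consequence; without it (or without restricting to type ${\rm II}_1$, where every projection is finite and your argument closes) the implication cannot be completed. For comparison, the paper's own proof does not supply this step either: in its finite-spectrum computation it replaces $\lim_{c\to\infty}\psi(c(q_1+\cdots+q_i))$ by $\alpha(\tau(q_1+\cdots+q_i))$ without justification when $\tau(q_1+\cdots+q_i)=\infty$. Your diagnosis of where the difficulty sits is therefore accurate, but the proposal as written does not close the proof, and the missing lemma is false under the stated hypotheses.
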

\begin{proof}
(1)$\Rightarrow$(2) Assume that $\psi$ is a non-linear trace $\psi = \psi_{\alpha}$  of the Sugeno type associated with  
a monotone increasing continuous function $\alpha$ as in (1).  Let $a$ be in $\M^+$.  
For any monotone increasing functions $f$ and $g$  in $C(\sigma(a))^+$, $f \lor g$ is also monotone increasing.  
Therefore 
$\lambda_t(f(a)) = f(\lambda_t(a))$, $\lambda_t(g(a)) = g(\lambda_t(a))$ and 
$$
\lambda_t((f\lor g)(a)) =(f\lor g)(\lambda_t(a)) =  f(\lambda_t(a)) \lor g(\lambda_t(a)).  
$$
Hence 
\begin{align*}
& \psi_{\alpha}((f \lor g)(a))  
= \lor_{t \in [0,\infty)} (\lambda_t((f \lor g)(a)) \land \alpha(t) )\\
= & \lor_{t \in [0,\infty)} ((f(\lambda_t(a)) \lor g(\lambda_t(a))) \land \alpha(t))\\
= &(\lor_{t \in [0,\infty)} f(\lambda_t(a)) \land \alpha(t)) 
    \lor (\lor_{t \in [0,\infty)} g(\lambda_t(a)) \land \alpha(t))\\
 = & \psi_{\alpha}(f(a)) \lor  \psi_{\alpha}(g(a)) . 
\end{align*}
Thus $\psi_{\alpha}$ is monotonic increasing F-additive on the spectrum. 

For $a,b \in M^+$, assume that  $a \leq b$. Then $\lambda_t(a) \leq \lambda_t(b)$.  
Hence 
\begin{align*}
\psi_{\alpha}(a) & =  \sup_{t \in [0,\infty)} \lambda_t(a) \land \alpha(t) \\
& \leq  \sup_{t \in [0,\infty)} \lambda_t(b) \land \alpha(t) = \psi_{\alpha}(b). 
\end{align*}
Thus $\psi_{\alpha}$ is a monotone map. 

For a positive scalar $k$ and a function $f$ on $[0,\infty)$ such that $f(t) = t$,  we have that 
$kI \land f$ is a monotonic increasing continuous function on  $[0,\infty)$. Hence for $ t \in [0,\infty)$, 
$$
\lambda_t(kI \land a)=  \lambda_t(kI \land f) (a)= (kI \land f) (\lambda_t(a)) = k \land  (\lambda_t(a)).
$$
Therefore 
\begin{align*}
\psi_{\alpha}(k \land a) &=
\sup_{t \in [0,\infty)} \lambda_t(kI \land a) \land \alpha(t)  \\
&= \lor_{t \in [0,\infty)} ((k \land  \lambda_t(a)) \land \alpha(t) ) \\
&=  k \land (\lor_{t \in [0,\infty)} ( \lambda_t(a) \land \alpha(t) ))
= k  \land \psi_{\alpha}(a). 
\end{align*}
Thus $\psi_{\alpha}$ is positively F-homogeneous. 
It is clear that $\psi_{\alpha}$ is unitarily invariant by definition. 
 We already showed that $\psi_{\alpha}$ is 
lower semi-continuous on $\M^+$ in the operator norm. 

Let $p$ be a finite projection in $\M$.
Then 
$$
\lim_{c\rightarrow \infty} \psi_{\alpha}(cp) =  \lim_{c\rightarrow \infty}  c \land  \alpha(\tau(p) )
= \alpha (\tau(p)) < +\infty.
$$

(2)$\Rightarrow$(1) 
Assume that $\psi$ satisfies the condition (2).  
Since  $\psi $ is unitarily invariant, $\alpha$ does not depend 
on the choice of such a projection. Since  $\psi$ is monotone, $\alpha$ is monotone increasing. 
Since  $\psi$ is positively F-homogeneous, putting $c = 0$ and $a = 0$, we have that 
$\psi(0 \land 0) = 0 \land \psi(0) = 0 $.  
This means that $\psi(0) = 0$,  so that $\alpha(0) = 0$.  

Suppose that $b \in \M^+$ has a finite spectrum and 
$b= \sum_{i=1}^{n} b_iq_i$ is the spectral decomposition with $b_1 \geq b_2 \geq \dots \geq b_n > 0$. 
Define  functions $f, f_1, f_2, \dots, f_n \in C(\sigma(b))^+$  by $f(x) =x$ and for $i = 1,2,\dots,n$
$$
f_i = b_i \chi_{\{b_1, b_2,\dots, b_i\}} = b_i I \land 
c \chi_{\{b_1, b_2, \dots, b_i\}}
$$
for $c \geq b_1 = \|b\|$, which does not depend on such $c$. 
Each $f_i$ is a monotone increasing function on $\sigma(b)$. Since 
$f = \lor_{i=1} ^{n} f_i$, we have that 
$
b = (\lor_{i=1} ^{n} f_i)(b). 
$
Since $\psi$ is monotonic increasing F-additive on the spectrum and positively F-homogeneous, 
we have that 
\begin{align*}
&  \psi(b) = \psi((\lor_{i=1} ^{n} f_i)(b))  = \lor_{i=1} ^{n}\psi(f_i(b)) \\
= &  \lor_{i=1} ^{n}\psi(b_iI \land 
c \chi_{\{b_1, b_2, \dots, b_i\}})(b)) \\
= &   \lor_{i=1} ^{n}(b_i \land 
\psi((c \chi_{\{b_1, b_2,\dots, b_i\}})(b)) \\
= &  \lor_{i=1} ^{n} (b_i \land \psi(c(q_1 + q_2 + \dots + q_i)) \\
= &  \lor_{i=1} ^{n} (b_i \land (\lim_{c\to \infty}\psi(c(q_1 + q_2 + \dots + q_i)) \\
= &  \lor_{i=1} ^{n} (b_i \land \alpha(\tau(q_1 + \dots + q_i))) = \psi_{\alpha}(b).
\end{align*}
Next, 
for any positive operator $a \in \M^+$, there exists an increasing sequence  $(b_n)_n$ in $\M$ with  $0 \leq b_n \leq a$ 
such that $(b_n)_n$ converges to $a$ in the operator norm topology and each $b_n$ has a finite spectrum.
Since $\psi$ and $\psi_{\alpha}$  are lower semi-continuous on $\M^+$ in the operator norm and 
$\psi(b_n) = \psi_{\alpha}(b_n)$, we conclude that  $ \psi(a) = \psi_{\alpha}(a)$ by taking their limits. 
Therefore $\psi$ is a non-linear trace $\psi_{\alpha}$  of the Sugeno type associated with  
a monotone increasing continuous function $\alpha$. 

\end{proof}

\section{Triangle inequality for non-linear traces of the Sugeno type}

In this section,  we discuss the triangle inequality and a metric for non-linear traces of the Sugeno type. 
The non-linear trace $\psi_{\alpha}$ has the following max type expression: 
\begin{proposition}
\label{prop:max}
Let $\M$ be a factor of type ${\rm II}_{\infty}$.  
Let $\alpha: [0,\infty)  \rightarrow [0, \infty)$ be a 
monotone increasing continuous function with $\alpha(0) = 0$.  
Let  $\psi_{\alpha} : \M^+ \rightarrow  [0, \infty]$ be a non-linear trace of the Sugeno type associated with $\alpha$ . 
Then 
for any $a \in \M^+$, we have that 
\begin{align*}
    & \psi_{\alpha}(a) \\
 = & \sup \{ \lambda \geq 0 \ | \ \exists \text{ a projection $p \in \M$ s.t. } \lambda \leq \alpha(\tau(p)) 
      \text{ and } pap \geq \lambda p  \}.   
\end{align*}
\end{proposition}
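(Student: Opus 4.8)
The plan is to compare the right-hand side, which I will call $C$, with the generalized-singular-value expression
$\psi_{\alpha}(a) = \sup_{t \geq 0} \lambda_t(a) \land \alpha(t)$ proved in the previous proposition, and to establish the two inequalities $\psi_{\alpha}(a) \leq C$ and $C \leq \psi_{\alpha}(a)$ separately. Throughout I would use that $s \mapsto \tau(e_{(s,\infty)}(a))$ is decreasing and right-continuous, so that the infimum defining $\lambda_t(a)$ is attained, i.e. $\tau(e_{(\lambda_t(a),\infty)}(a)) \leq t$, and that $\alpha$ is continuous and increasing.

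For $\psi_{\alpha}(a) \leq C$ I would feed spectral projections of $a$ into the supremum defining $C$. Fix $t \geq 0$ and any $\lambda' < \lambda_t(a)$, and put $p = e_{(\lambda',\infty)}(a)$. By the definition of $\lambda_t(a)$ one has $\tau(p) > t$, and since $a \geq \lambda'$ on the range of $p$ we have $pap \geq \lambda' p$. Setting $\mu := \lambda' \land \alpha(t)$, monotonicity of $\alpha$ gives $\mu \leq \alpha(t) \leq \alpha(\tau(p))$ together with $pap \geq \lambda' p \geq \mu p$, so $\mu$ is admissible for $C$ and hence $C \geq \lambda' \land \alpha(t)$. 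Letting $\lambda' \uparrow \lambda_t(a)$ and then taking the supremum over $t$ yields $C \geq \sup_t \lambda_t(a) \land \alpha(t) = \psi_{\alpha}(a)$.

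The reverse inequality is where the real work lies, and the key step is the claim that whenever a projection $p$ satisfies $pap \geq \lambda p$ with $\lambda > 0$, then $\lambda_t(a) \geq \lambda$ for every $t < \tau(p)$. I would argue by contradiction: if $\lambda_t(a) < \lambda$ for some $t < \tau(p)$, put $s := \lambda_t(a)$ and $q := e_{(s,\infty)}(a)$, so that $\tau(q) \leq t < \tau(p)$ and $a \leq s$ on the range of $q^\perp = e_{[0,s]}(a)$. The main obstacle is to exhibit a vector forcing a contradiction, and this is supplied by Kaplansky's parallelogram law: $p - p \wedge q^\perp \sim p \vee q^\perp - q^\perp \leq I - q^\perp = q$, so $\tau(p - p \wedge q^\perp) \leq \tau(q) \leq t$; since $\tau(p) = \tau(p - p \wedge q^\perp) + \tau(p \wedge q^\perp)$ and $t < \tau(p)$, this forces $p \wedge q^\perp \neq 0$. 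Any unit vector $\xi$ in its range satisfies $\langle a\xi,\xi\rangle \geq \lambda$ because $\xi \in pH$ and $pap \geq \lambda p$, yet $\langle a\xi,\xi\rangle \leq s < \lambda$ because $\xi \in q^\perp H$, a contradiction.

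Granting the claim, I would finish as follows. Let $\lambda$ be any value admissible for $C$, so there is a projection $p$ with $\lambda \leq \alpha(\tau(p))$ and $pap \geq \lambda p$. By the claim $\lambda_t(a) \geq \lambda$ for all $t < \tau(p)$, hence $\lambda_t(a) \land \alpha(t) \geq \lambda \land \alpha(t)$ for such $t$. Letting $t \uparrow \tau(p)$ and using continuity of $\alpha$ with $\alpha(\tau(p)) \geq \lambda$ gives $\sup_{t < \tau(p)} (\lambda \land \alpha(t)) = \lambda \land \alpha(\tau(p)) = \lambda$, so $\psi_{\alpha}(a) \geq \lambda$. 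Taking the supremum over all admissible $\lambda$ yields $\psi_{\alpha}(a) \geq C$, which completes the proof. I expect the projection-comparison estimate via Kaplansky's formula to be the only nonroutine ingredient, the remaining steps being direct manipulations of the supremum.
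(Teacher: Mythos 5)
Your proposal is correct, and both directions arrive at the same key intermediate fact, but the hard direction travels a genuinely different road. For $\psi_{\alpha}(a)\leq C$ you and the paper do essentially the same thing (feed spectral projections of $a$ into the supremum defining the right-hand side; the paper works directly with $s\mapsto s\land\alpha(\tau(e_{(s,\infty)}(a)))$, you work with the $\sup_t\lambda_t(a)\land\alpha(t)$ form, which costs you an extra approximation $\lambda'\uparrow\lambda_t(a)$ but is equally valid). For the reverse inequality, both proofs reduce to showing that $pap\geq\lambda p$ forces $\lambda_t(a)\geq\lambda$ for all $t<\tau(p)$, and then let $t\uparrow\tau(p)$ using continuity of $\alpha$ and $\lambda\leq\alpha(\tau(p))$. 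The paper gets this in two lines from the standard properties of generalized $s$-numbers already listed in its Definition 2.8: monotonicity gives $\lambda_t(pap)\geq\lambda_t(\lambda p)=\lambda$ for $t<\tau(p)$, and property (5) ($\mu_t(abc)\leq\|a\|\mu_t(b)\|c\|$) gives $\lambda_t(a)\geq\lambda_t(pap)$. You instead re-derive this inequality from scratch by a projection-comparison argument: Kaplansky's parallelogram law shows $p\wedge e_{[0,s]}(a)\neq 0$ whenever $s=\lambda_t(a)<\lambda$ and $t<\tau(p)$, and a unit vector in that intersection yields the contradiction $\lambda\leq\langle a\xi,\xi\rangle\leq s$. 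This is essentially a hands-on proof of the min-max characterization of $\lambda_t$; it is longer and requires the attainment of the infimum defining $\lambda_t(a)$ (which you correctly justify via normality of $\tau$), but it is self-contained and does not lean on the $s$-number calculus. Both arguments are sound; the paper's is shorter given the machinery it has already set up, while yours makes the geometric content of the inequality explicit.
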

\begin{proof}
Take and fix any $a \in \M^+$.  Let $B$ be the right-hand side:
$$
B :=  \sup \{ \lambda \geq 0 \ | \ \exists \text{ a projection  $p \in \M$ s.t.  } \lambda \leq \alpha(\tau(p)) 
\text{ and } pap \geq \lambda p  \}.
$$
Recall that 
$$
\psi_{\alpha}(a) := \sup_{s \in [0,\infty)} s \land \alpha(\tau(e_{(s,\infty)}(a))) 
= \sup_{t \in [0,\infty)} \lambda_t(a) \land \alpha(t). 
$$
For any $s \in [0,\infty)$, put $\lambda = s \land \alpha(\tau(e_{(s,\infty)}(a)))$ and $p = e_{(s,\infty)}(a)$.  
Then $\lambda \leq  \alpha(\tau(e_{(s,\infty)}(a))) = \alpha(\tau(p))$ and $pap \geq sp \geq \lambda p$. 
Hence $\lambda \leq B$.  Since $s$ is arbitrary, $\psi_{\alpha}(a) \leq B$.  

Conversely,  we  show that $\psi_{\alpha}(a) \geq B$.  Take any $\lambda \geq 0$ such that  
there exists a projection $p \in \M$  such that $\lambda \leq \alpha(\tau(p))$ and $pap \geq \lambda p$.  
If  $\lambda = 0$, then it is trivial that $\psi_{\alpha}(a) \geq \lambda$. We may and do assume that 
$\lambda > 0$.  Take an arbitrary $\epsilon > 0$ such that $\tau(p) > \epsilon > 0$.  
Put $t = \tau(p) - \epsilon > 0$.  Then $\lambda_t(p) = 1,$  and 
$$
\lambda_t(a) \geq \lambda_t(pap) \geq \lambda_t(\lambda p) = \lambda \lambda_t(p) = \lambda.   
$$ 
Since $\lambda \leq \alpha(\tau(p)) = \alpha(t +  \epsilon)$ and $\alpha$ is continuous, 
we have that $\lambda \leq \alpha(t)$.  Combining this with that $\lambda \leq \lambda_t(a)$, 
we get that $\lambda \leq \sup_{t \in [0,\infty)} \lambda_t(a) \land \alpha(t) = \psi_{\alpha}(a) $. 
Therefore $B \leq \psi_{\alpha}(a) $.
\end{proof}

The following proposition is similarly proved. 
\begin{proposition}
Let $M$ be a factor of type ${\rm II}_1$ with a faithful normal finite trace with $\tau(1) = 1$.  
Let $\alpha: [0,1] \rightarrow [0, \infty)$ be a 
monotone increasing continuous function with $\alpha(0) = 0$.  
Let  $\psi_{\alpha} : \M^+ \rightarrow  [0, \infty]$ be a non-linear trace of the Sugeno type associated with $\alpha$ . 
Then 
for any $a \in \M^+$, we have that 
\begin{align*}
 & \psi_{\alpha}(a) \\
 =  &  \sup \{ \lambda \geq 0 \ | \ \exists \text{ a projection $p \in M$ s.t. } \lambda \leq \alpha(\tau(p)) 
\text{ and } pap \geq \lambda p  \}.  
\end{align*}
\end{proposition}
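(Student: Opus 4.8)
The plan is to mirror the proof of Proposition~\ref{prop:max} almost verbatim, the only structural difference being that here $\alpha$ is defined on $[0,1]$ and the trace is normalized, so that $\tau(p)\in[0,1]$ for every projection and $\lambda_t(a)$ vanishes for $t>1$. First I would record the type $\mathrm{II}_1$ analogue of the singular-value expression established above, namely
$$
\psi_{\alpha}(a)=\sup_{s\in[0,\infty)}s\wedge\alpha(\tau(e_{(s,\infty)}(a)))=\sup_{t\in[0,1]}\lambda_t(a)\wedge\alpha(t),
$$
which holds for the same reason as in the type $\mathrm{II}_{\infty}$ case, using that $\lambda_t(a)=0$ for $t>1$ by property~(10) of the generalized singular values. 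Writing $B$ for the right-hand side of the asserted formula, it then suffices to prove the two inequalities $\psi_{\alpha}(a)\le B$ and $\psi_{\alpha}(a)\ge B$.

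For the inequality $\psi_{\alpha}(a)\le B$, I would fix $s\in[0,\infty)$, set $p:=e_{(s,\infty)}(a)$ and $\lambda:=s\wedge\alpha(\tau(p))$. Then $\lambda\le\alpha(\tau(p))$ by construction, while $pap\ge sp\ge\lambda p$ since $a\ge sI$ on the range of the spectral projection $p$. Hence $\lambda$ is admissible in the supremum defining $B$, so $\lambda\le B$; taking the supremum over $s$ gives $\psi_{\alpha}(a)\le B$.

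For the reverse inequality I would take any admissible $\lambda\ge0$, i.e.\ a projection $p\in\M$ with $\lambda\le\alpha(\tau(p))$ and $pap\ge\lambda p$, and show $\lambda\le\psi_{\alpha}(a)$. The case $\lambda=0$ is trivial, so assume $\lambda>0$; then $\tau(p)>0$ because $\alpha$ is increasing with $\alpha(0)=0$. For $0<\epsilon<\tau(p)$ put $t:=\tau(p)-\epsilon\in(0,1)$, so that $\lambda_t(p)=1$. Using monotonicity of $\mu_t$ on positive operators together with property~(5) in the instance $\mu_t(pap)\le\|p\|\,\mu_t(a)\,\|p\|=\mu_t(a)$ and positive homogeneity of $\lambda_t$, I get
$$
\lambda_t(a)\ge\lambda_t(pap)\ge\lambda_t(\lambda p)=\lambda\,\lambda_t(p)=\lambda.
$$
Since $\lambda\le\alpha(\tau(p))=\alpha(t+\epsilon)$ and $\alpha$ is continuous, letting $\epsilon\to0$ yields $\lambda\le\alpha(t)$, whence $\lambda\le\lambda_t(a)\wedge\alpha(t)\le\psi_{\alpha}(a)$. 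Thus $B\le\psi_{\alpha}(a)$, completing the argument.

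The step requiring the most care is this last one: it is exactly where the continuity of $\alpha$ is used to pass from $\alpha(t+\epsilon)$ to $\alpha(t)$, and where one must check that $t=\tau(p)-\epsilon$ stays in the domain $[0,1]$ of $\alpha$ — a point that is automatic here precisely because the trace is normalized. Beyond this, the type $\mathrm{II}_1$ setting introduces no new difficulty, since all the singular-value inequalities invoked remain valid for finite factors.
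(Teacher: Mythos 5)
Your proposal is correct and follows exactly the route the paper intends: the paper gives no separate proof here, stating only that the proposition ``is similarly proved'' by the argument of the type ${\rm II}_{\infty}$ case, and your adaptation (restricting $t$ to $[0,1]$, using $\lambda_t(a)=0$ for $t>1$, and noting that $\tau(p)\le 1$ keeps everything in the domain of $\alpha$) is precisely that argument.
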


In order to prove a triangle inequality for non-linear traces of the Sugeno type, we need a lemma.  
\begin{lemma}
\label{prop:min}
Let $\M$ be a factor of type ${\rm II}_{\infty}$. 
Let $\alpha: [0,\infty)  \rightarrow [0, \infty)$ be a 
monotone increasing continuous function with $\alpha(0) = 0$.  
Let  $\psi_{\alpha} : \M^+ \rightarrow  [0, \infty]$ be a non-linear trace of the Sugeno type associated with $\alpha$ . 
Take any $a \in \M^+$.  Then for any $\epsilon > 0$, there exists a projection $q \in \M$ such that $q \leq {\rm supp}(a)$, 
$\psi_{\alpha}(a) + \epsilon > \alpha(\tau(q))$ and $(I-q)a(I-q) \leq (\psi_{\alpha}(a) + \epsilon)(I-q)$.
\end{lemma}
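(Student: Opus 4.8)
The plan is to produce the required projection explicitly as a spectral projection of $a$. Write $c := \psi_{\alpha}(a)$; since $a$ is a bounded positive operator, the preceding proposition gives $\psi_{\alpha}(a) \leq \|a\| < \infty$, so $c + \epsilon$ is a genuine finite positive number. I would then set
\[
q := e_{(c+\epsilon,\infty)}(a),
\]
the spectral projection of $a$ associated with the interval $(c+\epsilon,\infty)$, and verify the three required properties for this single choice. The point of this choice is that $q$ isolates exactly the part of the spectrum of $a$ lying strictly above the target level $c+\epsilon$, so that on its complement $a$ is automatically controlled.

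For the first property, since $c + \epsilon > 0$ we have $q = e_{(c+\epsilon,\infty)}(a) \leq e_{(0,\infty)}(a) = {\rm supp}(a)$. For the operator inequality, note that $q$ commutes with $a$, so $(I-q)a(I-q) = a(I-q)$; on the range of $I - q = e_{[0,c+\epsilon]}(a)$ the spectrum of $a$ lies in $[0,c+\epsilon]$, whence by functional calculus $a(I-q) \leq (c+\epsilon)(I-q)$, which is the third property. The heart of the argument is the middle inequality $\alpha(\tau(q)) < c + \epsilon$. For this I would invoke the defining expression $\psi_{\alpha}(a) = \sup_{s\geq 0} \bigl(s \land \alpha(\tau(e_{(s,\infty)}(a)))\bigr)$ at the single value $s = c + \epsilon$, which gives
\[
(c+\epsilon) \land \alpha(\tau(q)) \leq \psi_{\alpha}(a) = c < c + \epsilon .
\]
Since the minimum of $c+\epsilon$ and $\alpha(\tau(q))$ is strictly below $c+\epsilon$, the other argument must itself be strictly below $c+\epsilon$; that is, $\alpha(\tau(q)) < c + \epsilon = \psi_{\alpha}(a) + \epsilon$, as required.

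The main obstacle — which is really a matter of finding the right construction rather than a technical difficulty — is recognizing that the correct $q$ is precisely the super-level spectral projection at height $\psi_{\alpha}(a)+\epsilon$, and then extracting the strict inequality from the supremum: evaluating the defining sup at $s = c+\epsilon$ can only contribute a value $\leq c$, and this is possible only because $\alpha(\tau(q))$ already lies below $c+\epsilon$. I note that this particular statement needs neither the continuity of $\alpha$ nor the max-type formula of Proposition \ref{prop:max}; everything follows from the definition of $\psi_{\alpha}$ together with elementary functional calculus. The type ${\rm II}_1$ analogue is handled identically, with $\alpha$ taken on $[0,1]$.
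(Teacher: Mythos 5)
Your proposal is correct and is essentially the paper's own proof: both take $q = e_{(\psi_{\alpha}(a)+\epsilon,\infty)}(a)$, get the operator inequality and $q \leq \mathrm{supp}(a)$ directly from functional calculus, and extract $\alpha(\tau(q)) < \psi_{\alpha}(a)+\epsilon$ by evaluating the defining supremum at $s = \psi_{\alpha}(a)+\epsilon$. Your side remark that continuity of $\alpha$ is not actually used here is accurate but does not change the argument.
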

\begin{proof}
Put $\mu = \psi_{\alpha}(a) + \epsilon > 0$. Define $q = e_{(\mu,\infty)}(a)$. Then 
$$
(I-q)a(I-q) \leq (\psi_{\alpha}(a) + \epsilon)(I-q).
$$
Recall that 
$$
\psi_{\alpha}(a) = \sup_{s \in [0,\infty)} s \land \alpha(\tau(e_{(s,\infty)}(a))), 
$$
$s \mapsto \alpha(\tau(e_{(s,\infty)}(a)))$ is decreasing and $s \mapsto s$ is increasing to the infinite. 
Since $\mu > \psi_{\alpha}(a)$, 
$$
\alpha(\tau(q)) = \alpha(\tau( e_{(\mu,\infty)}(a))) \leq \psi_{\alpha}(a) < \psi_{\alpha}(a) + \epsilon.  
$$
Since the support projection 
${\rm supp}(a) = e_{(0,\infty)}(a)$,  we have that $q \leq {\rm supp}(a)$.  
\end{proof}

We show that if $\alpha$ is concave, then the triangle inequality holds. 

\begin{theorem} 
Let $\M$ be a factor of type ${\rm II}_{\infty}$.  
Let $\alpha: [0,\infty)  \rightarrow [0, \infty)$ be a 
monotone increasing continuous function with $\alpha(0) = 0$ and  $\alpha(t) > 0$ for any $t > 0$.  
Let  $\psi_{\alpha} : \M^+ \rightarrow  [0, \infty]$ be a non-linear trace of the Sugeno type associated with $\alpha$.  
Define $||a||_{\alpha}:= \psi_{\alpha}(|a|)$ for $a \in \M$. 
Assume that $\alpha$ is concave.   
Then 
$|| \ ||_{\alpha}$ satisfies the triangle inequality: For any $b,c \in \M$, 
$$
||b + c||_{\alpha} \leq ||b ||_{\alpha} + || c||_{\alpha}.
$$
A similar fact holds for any factor of type ${\rm II}_1$.  
\end{theorem}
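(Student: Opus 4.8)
The plan is to work with the singular-value form of $\psi_\alpha$. By the earlier proposition expressing $\psi_\alpha$ through generalized eigenvalues, for any $a \in \M$ we have $\|a\|_\alpha = \psi_\alpha(|a|) = \sup_{t \ge 0}\mu_t(a) \wedge \alpha(t)$, using that $\alpha$ is continuous and $\mu_t(a) = \lambda_t(|a|)$. Writing $P = \|b\|_\alpha$ and $Q = \|c\|_\alpha$ (both finite, since $\psi_\alpha(|a|) \le \lambda_0(a) = \|a\|$), it suffices to prove $\mu_t(b+c) \wedge \alpha(t) \le P + Q$ for every fixed $t \ge 0$ and then pass to the supremum in $t$.

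First I would record the single place where concavity enters: a concave $\alpha$ with $\alpha(0)=0$ is subadditive, $\alpha(t_1 + t_2) \le \alpha(t_1) + \alpha(t_2)$, which follows from $\alpha(t_i) \ge \tfrac{t_i}{t_1+t_2}\alpha(t_1+t_2)$. Now fix $t$. If $\alpha(t) \le P + Q$ the desired bound is immediate, so assume $\alpha(t) > P+Q$; the goal becomes $\mu_t(b+c) \le P + Q$. The engine is the subadditivity of generalized singular values (property (3)), $\mu_{t_1+t_2}(b+c) \le \mu_{t_1}(b) + \mu_{t_2}(c)$, so I only need to exhibit a splitting $t_1 + t_2 \le t$ with $\mu_{t_1}(b) \le P$ and $\mu_{t_2}(c) \le Q$, after which $\mu_t(b+c) \le \mu_{t_1+t_2}(b+c) \le P+Q$ by monotonicity of $\mu$.

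To locate such a splitting I would introduce the crossing points $t_1^\ast = \sup\{s : \mu_s(b) > P\}$ and $t_2^\ast = \sup\{s : \mu_s(c) > Q\}$. The key observation is that whenever $\mu_s(b) > P$ one must have $\alpha(s) \le P$, because $\mu_s(b)\wedge\alpha(s) \le P$ and the minimum therefore cannot be $\alpha(s)$ if $\alpha(s) > P$; letting $s \uparrow t_1^\ast$ and using continuity of $\alpha$ gives $\alpha(t_1^\ast) \le P$, and likewise $\alpha(t_2^\ast) \le Q$. In the present subcase both $t_i^\ast$ are finite, for if $t_1^\ast = \infty$ then $\alpha(s) \le P$ for all $s$, forcing $\alpha(t) \le P \le P+Q$, contrary to assumption. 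Subadditivity now yields $\alpha(t_1^\ast + t_2^\ast) \le \alpha(t_1^\ast) + \alpha(t_2^\ast) \le P + Q < \alpha(t)$, and monotonicity of $\alpha$ forces $t_1^\ast + t_2^\ast < t$. Hence there is room to choose $t_1 > t_1^\ast$ and $t_2 > t_2^\ast$ with $t_1 + t_2 \le t$; by definition of the suprema $\mu_{t_1}(b) \le P$ and $\mu_{t_2}(c) \le Q$, which closes the case and thus the whole estimate.

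I expect the main obstacle to be exactly the bookkeeping around the crossing points: verifying $\alpha(t_1^\ast) \le P$ and $\alpha(t_2^\ast) \le Q$ through continuity of $\alpha$, and ruling out $t_i^\ast = \infty$, since this is the step where concavity (via subadditivity) and the standing assumption $\alpha(t) > P+Q$ must be combined to guarantee that an admissible splitting exists. This argument is essentially the analytic content packaged by Lemma \ref{prop:min}, so one could alternatively extract $q_1, q_2$ from that lemma and compress; the singular-value route above keeps the role of subadditivity transparent. For a factor of type ${\rm II}_1$ the same reasoning applies once $\alpha$ is extended by $\alpha(s) := \alpha(1)$ for $s > 1$: this extension stays concave, hence subadditive, and since $\mu_s(\cdot) = 0$ for $s > 1$ the crossing points satisfy $t_i^\ast \le 1$, so no new phenomena arise.
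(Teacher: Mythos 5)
Your argument is correct, but it follows a genuinely different route from the paper's. The paper proves the triangle inequality at the operator level: it invokes the max-type characterization (Proposition \ref{prop:max}) to produce a projection $p$ with $pap\geq\lambda p$ and $\lambda\leq\alpha(\tau(p))$ for $a=b+c$, then uses Lemma \ref{prop:min} to peel off a projection $q\leq p$ controlling $pbp$, compares $\alpha(\tau(p))$ with $\alpha(\tau(p-q))+\alpha(\tau(q))$ via subadditivity of the concave $\alpha$, and reads off a lower bound for $\psi_\alpha(pcp)$ from the max-type expression again; finally, since that machinery only works for positive operators, it must invoke the Akemann--Anderson--Pedersen theorem $|b+c|\leq u|b|u^*+v|c|v^*$ to treat general $b,c$. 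You instead work entirely with the scalar functions $t\mapsto\mu_t(\cdot)$: the only operator-theoretic input is the Fack--Kosaki subadditivity $\mu_{t_1+t_2}(b+c)\leq\mu_{t_1}(b)+\mu_{t_2}(c)$ (property (3) in the paper's list), which already holds for arbitrary, not necessarily positive, $b$ and $c$, so the Akemann--Anderson--Pedersen reduction disappears. Concavity enters in exactly the same way in both proofs (subadditivity of $\alpha$), but your crossing-point bookkeeping --- $\alpha(t_i^*)\leq P$ resp. $Q$ by continuity, finiteness of $t_i^*$ from the standing assumption $\alpha(t)>P+Q$, and $t_1^*+t_2^*<t$ from monotonicity --- replaces the paper's projection constructions. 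What your route buys is a more elementary and more portable argument: it shows the inequality is really a statement about decreasing rearrangements and would apply verbatim to any functional of the form $\sup_t\mu_t(\cdot)\wedge\alpha(t)$ on a space with a Weyl-type inequality. What the paper's route buys is that it exercises the max/min characterizations (Proposition \ref{prop:max} and Lemma \ref{prop:min}) that are developed for their own sake, and stays inside the algebra without passing to the singular value function. Two small points worth making explicit in a write-up: adopt the convention $t_i^*=0$ when the set $\{s:\mu_s(b)>P\}$ is empty (the estimate $\alpha(t_i^*)=0\leq P$ is then trivial), and note that the identity $\|a\|_\alpha=\sup_{t\geq0}\mu_t(a)\wedge\alpha(t)$ for general $a\in\M$ is just the paper's proposition applied to $|a|$.
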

\begin{proof}
First, we assume that $b,c \in \M$ are positive.  Put $a = b + c$.  We may assume that $\psi_{\alpha}(a) > 0$.
Take arbitrary $\epsilon >0$ with $\psi_{\alpha}(a) > \epsilon >0$.  
Let $\lambda = \psi_{\alpha}(a) - \epsilon >0$. By the max type theorem, Proposition \ref{prop:max},  there exists a projection $p \in \M$ such that 
$\lambda \leq \alpha(\tau(p))$ and  $pap \geq \lambda p$.  Therefore 
$$
\lambda  + \epsilon= \psi_{\alpha}(a) \geq  \psi_{\alpha}(pap) \geq \psi_{\alpha}(\lambda p) 
= \lambda \land \alpha(\tau(p)) = \lambda.
$$
Put $\mu = \psi_{\alpha}(pbp) + \epsilon >0$.  Then by Lemma \ref{prop:min}
there exists a projection $q \in M$ such that $q \leq {\rm supp}(pbp) \leq p$ , 
$\mu = \psi_{\alpha}(pbp) + \epsilon > \alpha(\tau(q))$ and 
$$
(I-q)pbp(I-q) \leq (\psi_{\alpha}(pbp) + \epsilon)(I-q).
$$
Therefore 
$$
(p-q)pbp(p-q) = p((I-q)pbp(I-q))p \leq (\psi_{\alpha}(pbp) + \epsilon)(p-q) = \mu(p-q).
$$
Since $\alpha$ is concave on $[0,\infty)$, for any $s \geq 0$ and $t \geq  0$, we have that 
$ \alpha(s + t) \leq \alpha(s) + \alpha(t)$.  
Hence 
$$
\alpha(\tau(p)) = \alpha(\tau(p -q) + \tau(q)) \leq \alpha(\tau(p -q)) + \alpha(\tau(q)).
$$
Then 
$$
\lambda - \mu  \leq  \alpha(\tau(p)) - \alpha(\tau(q)) \leq \alpha(\tau(p -q)).
$$
Moreover, 
\begin{align*}
(p-q)pcp(p-q) & = (p-q)pap(p-q) - (p-q)pbp(p-q)  \\ 
& \geq \lambda (p-q)p(p-q) -\mu(p-q) =  (\lambda -\mu)(p-q).
\end{align*}
By the max type theorem, 
we have that $(\lambda -\mu) \leq \psi_{\alpha}(pcp)$, that is, 
$$
\psi_{\alpha}(a) - \epsilon - (\psi_{\alpha}(pbp) + \epsilon) \leq \psi_{\alpha}(pcp). 
$$
This implies that 
$$
\psi_{\alpha}(a) - 2\epsilon \leq \psi_{\alpha}(pbp) + \psi_{\alpha}(pcp) \leq \psi_{\alpha}(b) + \psi_{\alpha}(c).
$$
Since $\epsilon > 0$ can be chosen arbitrarily small, we conclude that 
$$
\psi_{\alpha}(a) \leq \psi_{\alpha}(b) + \psi_{\alpha}(c).
$$
Next, we consider the general case such that $b,c \in M$ are not necessarily positive. 
By a theorem of Akemann-Anderson-Pedersen \cite{A-A-P}, which  generalizes Thompson's Theorem \cite{To}, 
there exist partial isometries $u,v \in M$ such that 
$$
| b + c | \leq u|b|u^* + v|c|v^*.
$$
Therefore we have that 
\begin{align*}
 \psi_{\alpha}(| b + c |) & \leq \psi_{\alpha}(u|b|u^* + v|c|v^*) \\
   & \leq \psi_{\alpha}(u|b|u^*) + \psi_{\alpha}(v|c|v^*) =  \psi_{\alpha}(|b|) + \psi_{\alpha}(|c|).
\end{align*}
This implies the desired triangle inequality.
\end{proof}

\begin{remark} \rm
Let $\M$ be a factor of type ${\rm II}_{\infty}$.  
Let $\alpha: [0,\infty)  \rightarrow [0, \infty)$ be a 
monotone increasing continuous function with $\alpha(0) = 0$ and  $\alpha(t) > 0$ for any $t > 0$ .  
Let  $\psi_{\alpha} : M^+ \rightarrow  [0, \infty]$ be a non-linear trace of the Sugeno type associated with $\alpha$ .  
Define $||a||_{\alpha}:= \psi_{\alpha}(|a|)$ for $a \in \M$. Then $||a||_{\alpha} = 0$ if and only if $a = 0$.  
Assume that $\alpha$ is concave.  
Although $||\lambda a||_{\alpha} = |\lambda| ||a||_{\alpha}$ does not hold in general,  
the triangle inequality above implies that 
$M$ with $d(a,b) := ||a -b||_{\alpha}$ is a metric space. Therefore $\M$ becomes a paranormed space, for example, see 
\cite{wilansky}. 
We  extend $\psi_{\alpha}$ to $\M$ as follows: Let $a \in \M$. Consider the decomposition 
$$
a = \frac{1}{2}(a + a^*) + i  \ \frac{1}{2i}(a - a^*) = a_1 -a_2 + i(a_3 -a_4)
$$
with $a_1, a_2, a_3, a_4 \in \M^+$ and $a_1a_2 = a_3a_4 = 0$.  
We define a non-linear trace 
$\psi_{\alpha} : \M \rightarrow  {\mathbb C}$ by 
$$
\psi_{\alpha}(a) := \psi_{\alpha}(a_1) - \psi_{\alpha}(a_2) + i( \psi_{\alpha}(a_3) - \psi_{\alpha}(a_4)),
$$
where we use the same symbol $\psi_{\alpha}$ for the extended non-linear trace. 
\end{remark}

The case of a factor of type ${\rm II}_1$ is more easily proved.

\end{document}